\patchcmd{\subsubsection}{\itshape}{\bfseries}{}{}
\newcommand{\N}{\mathbb{N}}
\newcommand{\Z}{\mathbb{Z}}
\newcommand{\Q}{\mathbb{Q}}
\newcommand{\R}{\mathbb{R}}
\renewcommand{\o}{\omega}
\newcommand{\Id}{\mathrm{Id}}
\newcommand{\Sign}{\mathrm{Sign}}
\newcommand{\ld}{\lambda}
\newcommand{\vp}{\varphi}
\newcommand{\g}{\gamma}
\newcommand{\w}{\wedge}
\newcommand{\pp}{\partial}
\newcommand{\eps}{\varepsilon}
\newtheorem{mainthm}{Main Theorem}
\newtheorem{thm}{Theorem}[section]
\newtheorem{lmm}[thm]{Lemma}
\newtheorem{prop}[thm]{Proposition}
\newtheorem{cor}[thm]{Corollary}
\newtheorem{note}[thm]{Note}
\newtheorem{remark}[thm]{Remark}
\begin{document}

\title{Conley-Zehnder Indices of Spatial Rotating Kepler Problem}
\author{Dongho Lee}
\maketitle

\begin{abstract}
	We study periodic orbits in the spatial rotating Kepler problem from a symplectic-topological perspective. Our first main result provides a complete classification of these orbits via a natural parametrization of the space of Kepler orbits, using angular momentum and the Laplace–Runge–Lenz vector. We then compute the Conley–Zehnder indices of non-degenerate orbits and the Robbin–Salamon indices of degenerate families, establishing their contributions to symplectic homology via the Morse–Bott spectral sequence. To address coordinate degeneracies in the spatial setting, we introduce a new coordinate system based on the Laplace–Runge–Lenz vector. These results offer a full symplectic-topological profile of the three-dimensional rotating Kepler problem and connect it to generators of symplectic homology.
\end{abstract}

\tableofcontents

\section{Introduction}

The study of periodic solutions in celestial mechanics has long served as a bridge between classical mechanics and symplectic geometry. Among various models, the rotating Kepler problem has provided a particularly rich source of examples for both dynamical systems and symplectic topologists.

Our first main result is a complete classification of periodic orbits of the rotating Kepler problem, based on a natural parametrization of the moduli space of the periodic orbits of Kepler problem, using the angular momentum and the Laplace-Runge-Lenz vector.
\begin{mainthm}
	Let $\mathcal{M}_E$ be a moduli space of the periodic orbits of Kepler problem with Kepler energy $E<0$.
	Let $L$ be the angular momentum and $A$ be the Laplace-Runge-Lenz vector.
	Then there exists a well-defined bijection
		$$
	\begin{aligned}
		\Phi: \mathcal{M}_E &\to S^2 \times S^2 \subset \R^3\times \R^3 \\
		\gamma &\mapsto \left( \sqrt{-2E} L - A, \, \sqrt{-2E} L + A \right).
	\end{aligned}
	$$
\end{mainthm}
While the topology of this moduli space, essentially the space of closed geodesics on $S^3$, is already well understood, our parametrization offers a new perspective rooted in the dynamics of the rotating Kepler problem.
This enables us to describe periodic orbits of the rotating Kepler problem in a convenient way.
For example, the third component of the angular momentum $L_3$ serves as a Morse function on $\mathcal{M}_E$.
The retrograde and direct orbits correspond to the maximum and minimum of $L_3$, and the Morse-Bott family of periodic orbits of the rotating Kepler problem appears as a regular level set of $L_3$, which is homeomorphic to $S^3$.
The detailed discussion can be found in \Cref{Subsection - Moduli}.

The second main result concerns the computation of the Conley-Zehnder indices of these periodic orbits, regarded as Reeb orbits in the cotangent bundle $T^*S^3$. The Conley-Zehnder indices for the planar rotating Kepler problem were previously computed in \cite{Albers_Fish_Frauenfelder_vanKoert_13}. Our work extends this computation to the spatial setting, including new periodic orbits which appear in the spatial problem.
\begin{mainthm}
		Let $c<-3/2$ be given and consider the periodic orbits of the rotating Kepler problem with Jacobi energy $c$.
	\begin{enumerate}
		\item The Conley-Zehnder indices of the $N$-th iterate of the retrograde orbit $\g_+$ and the direct orbit $\g_-$ are given by
			$$
			\mu_{CZ}(\g_\pm^N)=2+4\left\lfloor N\frac{(-2E)^{3/2}}{(-2E)^{3/2}\pm1}\right\rfloor.
			$$
		\item The Conley-Zehnder indices of the $N$-the iterate of the vertical collision orbits $\g_{c_\pm}$ are given by
			$$
			\mu_{CZ}(\g_{c_\pm}^N)=4N.
			$$
		\item The orbits with Kepler energy $E_{k,l}=-\frac{1}{2}\left(\frac{k}{l}\right)^{3/2}$ for some $k,l\in\N$ are periodic and form a Morse-Bott family $\Sigma_{k,l}$ which is homeomorphic to $S^3\times S^1$.
		The Robbin-Salamon index of $\Sigma_{k,l}$ is given by
			$$
			\mu_{RS}(\Sigma_{k,l})=4k-1/2.
			$$
	\end{enumerate}
\end{mainthm}
In \Cref{Theorem - Index of planar circular orbits}, we compute the indices of the retrograde and direct orbits, which also appear in the planar case.
There exists a degree shift due to the spatial direction, and the resulting indices are twice compared to the planar case.
In \Cref{Theorem - Index of Collision}, we compute the indices of vertical collision orbits, which are unique to the spatial problem. As shown in \Cref{Subsection - Relation with SH}, these indices align with the grading of the symplectic homology of $T^*S^3$, suggesting that the spatial rotating Kepler problem provides a geometric realization of certain generators in symplectic homology.

For the families of degenerate orbits, we compute their Robbin-Salamon indices and analyze their contributions to symplectic homology via the Morse-Bott spectral sequence, as detailed in \Cref{Theorem - Index of Degenerate Orbits}. A key step is to establish that these families form Morse-Bott type critical manifolds, which we prove in \Cref{Proposition - Morse-Bott property}. While Delaunay coordinates were effectively used in \cite{Albers_Fish_Frauenfelder_vanKoert_13} to demonstrate the Morse-Bott property in the planar setting, they become partially degenerate in the spatial case. To overcome this, we introduce a new coordinate system based on the Laplace-Runge-Lenz vector in \Cref{Subsection - Morse-Bott property}.

This work contributes to a growing body of results at the intersection of classical mechanics, symplectic topology, and Floer-theoretic invariants. It provides a complete symplectic-topological profile of the three-dimensional rotating Kepler problem, with implications for understanding periodic orbits in other central force systems and their symplectic interpretations.\\
~~\\
\textbf{Acknowledgement.} The author was partially supported by the Center for Quantum Structures in Modules and Spaces (QSMS), and by the National Research Foundation of Korea (NRF), Grant number (MSIT) RS-2023-NR076656 and NRF-2022R1F1A1074587.
The author would like to thank Professor Cheol-hyun Cho\footnotemark[1], professor Otto van Koert\footnotemark[1] and professor Joontae Kim\footnotemark[2] for their valuable support and advice.
Special thanks are due to Beomjun Sohn\footnotemark[1] for insightful discussions and Chankyu Joung\footnotemark[1] for preparing the illustration used in this work.
\footnotetext[1]{Department of Mathematics, Seoul National University, Korea}
\footnotetext[2]{Department of Mathematics, Sogang University, Korea}

\section{The Rotating Kepler Problem}

\subsection{Invariants of the Kepler Problem}\label{Subsection - Invariants}

The \textbf{Kepler problem} is a dynamical system that describes the motion of a massless body in Euclidean space under the gravitational force of another body.
For simplicity, we assume that the source of the gravitational force has unit mass and is fixed at the origin.
The Hamiltonian governing the Kepler problem is given by
$$
\begin{aligned}
	E:T^*(\R^3 \setminus\{0\})&\to \R\\
	(q,p)&\mapsto \frac{1}{2}|p|^2 - \frac{1}{|q|}.
\end{aligned}
$$
In the 17th century, Johannes Kepler established three laws of planetary motion based purely on observational data.
Deriving these laws is now a standard exercise in classical mechanics.
\begin{enumerate}
	\item Each orbit is a conic section with one focus at the origin. If $E < 0$, such an orbit is an ellipse, which can be possibly degenerate (the collision orbit, which will be explained in \Cref{Subsection - Moser regularization}).
	\item The areal velocity $d\text{Area}/dt = r^2\dot{\theta}$ is constant, where $(r, \theta)$ are polar coordinates on the plane containing the orbit.
	\item Let $\tau$ denote the orbital period. Then $\tau = 2\pi / (-2E)^{3/2}$.
\end{enumerate}
We now introduce two fundamental invariants associated with the Kepler problem.
The first is the \textbf{angular momentum}, defined by
	$$L = (L_1,L_2,L_3) = q \times p,$$
where $\times$ denotes the cross product in $\R^3$.
By "invariant", we mean that $L_i$'s are conserved quantities along the $E$-flow, i.e., $\{E, L_i\} = 0$ for each $i = 1,2,3$.
This invariance follows from the obvious $SO(3)$-symmetry of the system. The Hamiltonian flow generated by each $L_i$ corresponds to a rotation of period $2\pi$ about the $q_i$- and $p_i$-axes.

Another important invariant is the \textbf{Laplace–Runge–Lenz (LRL) vector}, defined by
$$
A = p\times L - \frac{q}{|q|}.
$$
Here are some well-known properties of LRL vector, which will be used in later sections.

\begin{prop}\label{Proposition - LRL properties}
	Let $A$ be the Laplace–Runge–Lenz vector, and assume that the Kepler energy $E$ is negative, so that every orbit is bounded. Then,
	\begin{enumerate}
		\item $A$ is an invariant of the system: $\{E, A_i\} = 0$ for each $i = 1,2,3$.
		\item $A$ is orthogonal to the angular momentum $L$.
		\item $A$ is aligned with the major axis of the elliptic orbit.
		\item Let $\varepsilon$ denote the eccentricity of the ellipse. Then
			$$
			\eps^2 = |A|^2 = 2E|L|^2 + 1.
			$$
		\item $\{A_i, L_j\} = \sum_k \varepsilon_{ijk} A_k$, where $\varepsilon_{ijk}$ is the Levi-Civita symbol \footnotemark[3].
	\end{enumerate}
\end{prop}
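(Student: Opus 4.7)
The plan is to verify the five assertions by direct computation, exploiting the conservation of $L$ and $A$ so that the geometric statements (3) and (4) may be checked at a single convenient point on the orbit, namely the perihelion.

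For part (1) I would differentiate $A = p \times L - q/|q|$ along the Kepler flow. Using Hamilton's equations $\dot q = p$, $\dot p = -q/|q|^3$, together with $\dot L = 0$ (the $SO(3)$-invariance noted just before the proposition), one obtains $\dot A = \dot p \times L - \dot q/|q| + q(q \cdot \dot q)/|q|^3$. Expanding $\dot p \times L = -|q|^{-3}\, q \times (q \times p)$ by the identity $a \times (b \times c) = (a \cdot c) b - (a \cdot b) c$ produces two terms that exactly cancel the remaining ones, so $\dot A = 0$. Part (2) is then immediate: both $p \times L$ and $q$ are orthogonal to $L$, the first as a degenerate triple product, the second because $L = q \times p$. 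For part (5) I would compute $\{A_i, L_j\}$ componentwise from the canonical brackets $\{q_i, p_j\} = \delta_{ij}$; more conceptually, for any function $v_i(q,p)$ that transforms as a Cartesian vector under the rotation generated by $L_j$ one has $\{v_i, L_j\} = \sum_k \varepsilon_{ijk} v_k$, and this property is clear for both $p \times L$ and $q/|q|$, hence for $A$.

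Parts (3) and (4) I would handle by evaluating $A$ at the perihelion, where $q \cdot p = 0$. There the vector triple product gives $p \times L = p \times (q \times p) = |p|^2 q$, so $A = (|p|^2 - 1/|q|)\, q$, which is parallel to $q$ and hence to the major axis, establishing (3) at that point and then for all points by conservation. For (4) I would compute $|A|^2$ directly rather than at the perihelion: the relation $p \cdot L = 0$ gives $|p \times L|^2 = |p|^2 |L|^2$, and the cyclic triple-product identity gives $(p \times L) \cdot q = L \cdot (q \times p) = |L|^2$, so expanding $|A|^2 = |p \times L|^2 - 2(p \times L) \cdot q/|q| + 1$ and using $|p|^2 - 2/|q| = 2E$ yields $|A|^2 = 2E|L|^2 + 1$. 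The identification with $\varepsilon^2$ then follows from the standard ellipse parameters $E = -1/(2a)$ and $|L|^2 = a(1-\varepsilon^2)$, themselves easily recovered by comparing $|p|$ and $|q|$ at perihelion and aphelion.

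The main obstacle is not conceptual but bookkeeping: the vector-triple-product cancellation in (1) must be carried out carefully, and the identification of $|A|^2$ with $\varepsilon^2$ in (4) requires invoking the standard ellipse parameters at a specific point. None of the arguments uses anything specific to the rotating problem, so these identities apply verbatim throughout the rest of the paper.
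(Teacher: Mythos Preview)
Your proof is correct and complete; each of the five parts is handled cleanly, the triple-product cancellation in (1) goes through, and the computation of $|A|^2$ in (4) is valid. However, there is nothing to compare against: the paper states this proposition as a collection of well-known facts and gives no proof at all, so your argument simply supplies what the paper leaves to the reader.
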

\footnotetext[3]{$\eps_{ijk}$ defined by the sign of a permutation $(i,j,k)$. It can be simply computed by $\det(e_i,e_j,e_k)$, where $e_i$ is $i$-th standard basis of $\R^3$.}

An important consequence is that the triple $(E, L, A)$ uniquely determines the Kepler orbit, as illustrated in \Cref{Figure : Kepler Orbit}.

\begin{prop}\label{Proposition - ELA Kepler orbit}
	Given a Kepler orbit (and hence given $L$ and $A$), let $(r,\theta)$ denote polar coordinates in the plane defined by $L \cdot  q = 0$.
	Then the orbital trace is given by
		$$
		r = \frac{|L|^2}{1+|A|\cos(\theta-g)},
		$$
	where $g$, the \emph{argument of perigee}, is determined by $L$ and $A$.
\end{prop}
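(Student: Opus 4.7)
The plan is to extract the orbit equation directly from the definition of the Laplace--Runge--Lenz vector by computing the scalar product $A \cdot q$ in two different ways and comparing.

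First, I would confirm that the polar setup makes sense. Since $L = q \times p$, the position $q$ automatically lies in the plane $\{v \in \R^3 : L \cdot v = 0\}$, so polar coordinates $(r,\theta)$ on this plane are well-defined with $r = |q|$. By part (2) of \Cref{Proposition - LRL properties}, the vector $A$ is orthogonal to $L$, hence also lies in this plane, so I can let $g \in \R/2\pi\Z$ be the polar angle of $A$ relative to whatever reference direction was chosen for $\theta$. This makes $g$ depend only on $L$ and $A$, as claimed.

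Next, the computational heart: using the definition $A = p \times L - q/|q|$, compute
\begin{equation*}
A \cdot q \;=\; (p \times L) \cdot q \;-\; \frac{q\cdot q}{|q|} \;=\; L \cdot (q \times p) \;-\; r \;=\; |L|^2 - r,
\end{equation*}
where I have used the cyclic identity for the scalar triple product together with $L = q\times p$. On the other hand, since both $A$ and $q$ lie in the orbital plane and make angles $g$ and $\theta$ respectively with the reference direction, the Euclidean inner product gives
\begin{equation*}
A \cdot q \;=\; |A|\, r\, \cos(\theta - g).
\end{equation*}
Equating the two expressions and solving for $r$ yields
\begin{equation*}
r \bigl(1 + |A|\cos(\theta - g)\bigr) = |L|^2,
\end{equation*}
which is the stated formula.

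There is no real obstacle here beyond bookkeeping: the only mild subtlety is verifying that the choice of $g$ as the polar angle of $A$ is consistent with the terminology \emph{argument of perigee}, i.e.\ that $\theta = g$ corresponds to the perihelion. This is automatic from the formula, since $r$ attains its minimum exactly when $\cos(\theta - g) = 1$, which is consistent with part (3) of \Cref{Proposition - LRL properties} asserting that $A$ points along the major axis toward the perigee. The non-degeneracy assumption $E < 0$ is implicit through the use of the elliptic properties from \Cref{Proposition - LRL properties}; the same derivation in fact goes through for all conic sections, but the notion of argument of perigee is only meaningful in the elliptic case.
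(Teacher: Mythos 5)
Your derivation is correct: the paper states this proposition without proof (it is the classical conic-section equation for the Kepler orbit), and your argument—evaluating $A\cdot q$ once via the triple-product identity $(p\times L)\cdot q = L\cdot(q\times p)=|L|^2$ and once as $|A|\,r\cos(\theta-g)$—is exactly the standard route one would supply. The only implicit hypothesis worth flagging is $L\neq 0$, since for collision orbits the plane $L\cdot q=0$ and the polar angle $\theta$ are not defined; this is consistent with the paper's treatment of collision orbits as the degenerate case $\varepsilon=1$.
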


\begin{figure}[ht]
	\centering
	\includegraphics[width=7cm]{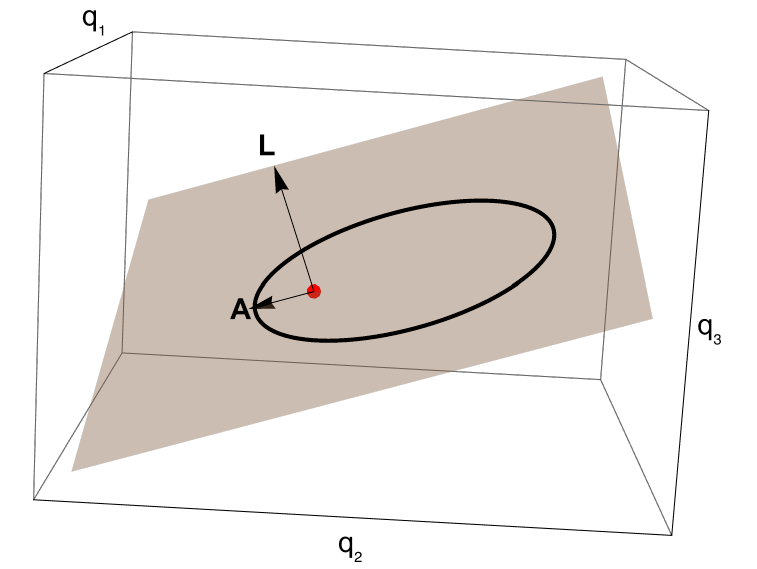}
	\caption{Kepler orbit determined by $L$ and $A$.}
	\label{Figure : Kepler Orbit}
\end{figure}

The \textbf{rotating Kepler problem} is defined by the following Hamiltonian
	$$
		H = E+L_3 = \frac{1}{2}|p|^2 - \frac{1}{|q|}+(q_1p_2 - q_2p_1).
	$$
To avoid confusion, we refer to $H$ as the \textbf{Jacobi energy}, and to $E$ as the \textbf{Kepler energy}.

	\subsubsection*{Motivation of the Rotating Kepler Problem}
	A natural motivation for considering the rotating Kepler problem is the circular restricted three-body problem.
	This models the motion of a massless body under the gravitational influence of two massive bodies, typically referred to as the Earth and the Moon.
	Let their masses have the ratio $1-\mu : \mu$, where $0 < \mu < 1$, and let $e(t)$ and $m(t)$ denote their trajectories. Then the system is governed by the time-dependent Hamiltonian
	$$
	H_t(q,p) = \frac{1}{2}|p|^2 - \frac{\mu}{|q-m(t)|}-\frac{1-\mu}{|q-e(t)|}.\\
	$$
	If we assume the Earth and Moon move in circular orbits, then
	$$
	e(t)=-\mu(\cos t,-\sin t,0),\quad m(t) = (1-\mu)(\cos t, -\sin t, 0),
	$$
	In this case, the system becomes the circular restricted three-body problem.
	By changing to a rotating reference frame in which the Earth and Moon are fixed at $(-\mu, 0)$ and $(1 - \mu, 0)$, respectively, the Hamiltonian becomes autonomous and takes the form
	$$
	H(q,p) = \frac{1}{2}|p|^2 -\frac{1-\mu}{|q+\mu|}-\frac{\mu}{|q-(1-\mu)|}+ (q_1 p_2 - q_2 p_1).
	$$
	Taking the limit $\mu \to 0$, which corresponds to the Moon having negligible mass, yields the Hamiltonian for the rotating Kepler problem.


\subsection{Moser Regularization}\label{Subsection - Moser regularization}

The singularity at the origin in the Kepler problem prevents the level set from being compact or complete.
To resolve this issue, we introduce the regularization method proposed by Moser in \cite{Moser_70}.
We begin with formulae for the stereographic projection.

\begin{lmm}\label{Lemma - Stereographic Projection}
Let $S^3_r$ denote the $3$-sphere of radius $r$, and parametrize $T^*S_r^3$ by
	$$
	T^*S_r^3 = \left\{(x;y) = (x_0,\vec{x};y_0,\vec{y})\in \R^4\times \R^4\,:\,|x|^2=1,\,x\cdot y=0\right\}.
	$$
The stereographic projection from the north pole $(r,0,0,0)$ is given by
	$$
	\begin{aligned}
		\Phi_r:T^*S_r^3&\to T^*\R^3\\
		(x,y)&\mapsto\left(\frac{r\vec{x}}{r-x_0},\frac{r-x_0}{r}\vec{y}+\frac{y_0}{r}\vec{x}\right),
	\end{aligned}
	$$
with the inverse
	$$
	\begin{aligned}
		\Psi_r:T^*\R^3&\to T^*S_r^3\\
		(p,q)&\mapsto
		\left(\frac{r(|p|^2-r^2)}{|p|^2+r^2},\frac{2r^2p}{|p|^2+r^2},\frac{p\cdot q}{r},\frac{|p|^2+r^2}{2r^2}q - \frac{p\cdot q}{r^2}p\right).
	\end{aligned}
	$$
In particular, the following relations hold.
	$$
	\begin{aligned}
		r-x_0&=\frac{2r^3}{|p|^2+r^2}, &|p|^2&=\frac{2r^3}{r-x_0}-r^2,\\
		|y|^2 & = \frac{(|p|^2+r^2)^2}{4r^4}|q|^2,&|q|&=\frac{2r^2}{|p|^2+r^2}|y|=\frac{r-x_0}{r}|y|.
	\end{aligned}
	$$	
\end{lmm}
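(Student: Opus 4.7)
The plan is to decompose the verification into three steps: first establish the base-space stereographic projection, then lift it to the cotangent bundle, and finally derive the auxiliary norm identities by direct algebra.

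I would begin by recalling the classical construction of stereographic projection $\phi_r \colon S_r^3 \setminus \{(r,0,0,0)\} \to \R^3$: the line through the north pole $(r,0,0,0)$ and a point $(x_0,\vec x)$ with $x_0^2 + |\vec x|^2 = r^2$ meets the hyperplane $\{x_0 = 0\}$ at $(0,p)$ with $p = \frac{r\vec x}{r - x_0}$. Computing $|p|^2$ and using the sphere equation gives $|p|^2 = \frac{r^2(r + x_0)}{r - x_0}$, from which the identity $|p|^2 + r^2 = \frac{2r^3}{r - x_0}$ is immediate. This is the fundamental relation of the lemma: its algebraic inverse yields $x_0 = \frac{r(|p|^2 - r^2)}{|p|^2 + r^2}$ and $\vec x = \frac{2r^2 p}{|p|^2 + r^2}$, matching the base components of $\Psi_r$ and giving the first displayed line of relations.

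Next I would treat the cotangent component. Since $\phi_r$ is a diffeomorphism, its cotangent lift $(x, y) \mapsto (\phi_r(x), (d\phi_r(x)^{-1})^* y)$ is the unique symplectomorphism covering $\phi_r$ and preserving the canonical one-form; here covectors on $S_r^3$ are identified via the Euclidean metric with vectors $y \in \R^4$ satisfying $x \cdot y = 0$. Rather than invert the Jacobian matrix directly, I would take the formulae in the statement as an ansatz and verify three points: (i) the image of $\Psi_r$ satisfies $x \cdot y = 0$, which reduces via the base-space relation to a cancellation of $p \cdot q$ terms; (ii) $\Psi_r \circ \Phi_r = \Id$ and vice versa, by direct substitution using the first displayed identity; and (iii) $\Psi_r^*(y \cdot dx) = q \cdot dp$, confirming that this is the canonical symplectic lift and therefore pinning down the cotangent formula uniquely.

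Finally, the norm identity $|y|^2 = \frac{(|p|^2 + r^2)^2}{4r^4} |q|^2$ comes out by expanding $|y|^2 = y_0^2 + |\vec y|^2$ using $y_0 = \frac{p \cdot q}{r}$ and $\vec y = \frac{|p|^2 + r^2}{2r^2} q - \frac{p \cdot q}{r^2} p$; the cross terms in $p \cdot q$ collapse cleanly, and the $|p|^2$-dependence assembles into $(|p|^2 + r^2)^2$. Combining with the base relation gives $|q| = \frac{r - x_0}{r} |y|$. The main obstacle is purely bookkeeping: ensuring the $p \cdot q$ cross terms cancel correctly in step (i) and in the norm calculation, but there are no conceptual hurdles beyond carefully tracking the algebra, and the entire lemma reduces to a handful of elementary identities once the geometric picture of stereographic projection is in place.
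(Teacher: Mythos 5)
The paper states this lemma without proof (it is standard background for Moser regularization, going back to \cite{Moser_70}), so there is no in-paper argument to compare against; your verification strategy is correct and complete in outline, and the algebra all checks: the line through $(r,0,0,0)$ and $(x_0,\vec x)$ does meet $\{x_0=0\}$ at $p=\tfrac{r\vec x}{r-x_0}$, the relation $|p|^2+r^2=\tfrac{2r^3}{r-x_0}$ follows, $p\cdot q = r y_0$ gives the $y_0$-component of $\Psi_r$, and the $p\cdot q$ cross terms in $|y|^2$ cancel exactly as you claim. One point worth flagging: the lemma's parametrization reads $|x|^2=1$, but the displayed formulas are only consistent with $|x|^2=r^2$ (e.g.\ $x_0^2+|\vec x|^2$ computed from $\Psi_r$ equals $r^2$); you implicitly and correctly work with the $|x|^2=r^2$ normalization, which is the one the rest of the paper's regularization (before the rescaling $sc_{1,r}$) actually uses.
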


The Moser regularization of the Kepler problem works as follows.
Fix an energy level $E = E_0 < 0$, and define the modified Hamiltonian on $T^*(\R^3 \setminus \{0\})$
$$
\tilde{K}_{E_0}(q,p) = \frac{1}{2}\left(|q|\left(E(q,p)-E_0\right)+1\right)^2
= \frac{1}{2}\left(\frac{1}{2}(|p|^2-2E_0)|q|\right)^2.
$$
This Hamiltonian extends smoothly to the origin, allowing it to be defined on all of $T^*\R^3$.
Moreover, the level set $E^{-1}(E_0)$ is contained in $\tilde{K}_{E_0}^{-1}(1/2)$, so the Hamiltonian flows of $E$ and $\tilde{K}_{E_0}$ are equivalent on this set, up to reparametrization.

Define the switch map $\sigma: T^*\R^3 \to T^*\R^3$ by $\sigma(q,p) = (p, -q)$, which is a symplectomorphism.
By composing $\tilde{K}_{E_0}$ with $\sigma$ and the inverse stereographic projection $\Psi_r$, we obtain a Hamiltonian on $T^*S^3_r$,
$$
K_r(x,y) = \tilde{K}_{E_0}(\Psi_r(p,-q))=\frac{1}{2}|y|^2 \left(r^2 + (r-x_0)\left(-\frac{E_0}{r^2}-\frac{1}{2}\right)\right)^2,
$$
where $(x,y)$ is coordinates of $T^*S^3_r$, used in \Cref{Lemma - Stereographic Projection}.
By setting $r = \sqrt{-2E_0}$, this simplifies to
$$
K_r(x,y) = \frac{r^4}{2}|y|^2.
$$
This defines the geodesic flow on $T^*S^3_r$.
In other words, the level set $E = E_0$ of the Kepler problem is embedded as a subsystem of the geodesic flow on $T^*S^3_r$.

For convenience, we often pull back $K_r$ to $T^*S^3 = T^*S^3_1$ using the scaling map
$$
\begin{aligned}
	sc_{1,r}:T^*S^3&\to T^* S_r^3\\
	(x,y)&\mapsto (r x, y/r),
\end{aligned}
$$
so that the Hamiltonian defined on $T^*S^3_1$ becomes
$$
K_r(x,y) = \frac{r^2}{2}|y|^2.
$$
The orbits added by Moser regularization correspond to those passing through the origin, and are called \textbf{collision orbits}.
For example, consider the initial condition
$$
(q(0);p(0)) = \left(0,0,-\frac{1}{E_0}\,;\,0,0,0\right)=\left(0,0,\frac{2}{r^2}\,;\,0,0,0\right).
$$
This represents an object falling directly into the origin and colliding in finite time.
In $T^*S^3$, the point $(q(0);p(0))$ corresponds to
$$
(x(0);y(0)) = \left(-1,0,0,0\,;\,0,0,0,-\frac{1}{r}\right).
$$
The orbit of the Hamiltonian vector field $X_{K_r}$ with this initial condition is given by
$$
(x(t);y(t)) = \left(-\cos (r t),0,0,-\sin (rt)\,;\,\frac{\sin (rt)}{r},0,0,-\frac{\cos (rt)}{r}\right).
$$
Applying stereographic projection and the inverse switch map yields the corresponding orbit in $T^*\R^3$:
$$
(q(t);p(t)) = \left(0,0,\frac{1}{r^2}(1+\cos(rt))\,;\,0,0,-\frac{r\sin(rt)}{1+\cos (rt)}\right).
$$
Any initial condition with $p(0) = 0$ leads to a similar trajectory.
The resulting collision orbit oscillates between the origin and a maximal height in $T^*\R^3$.

\begin{figure}[ht]
	\centering
	\includegraphics[width=7cm]{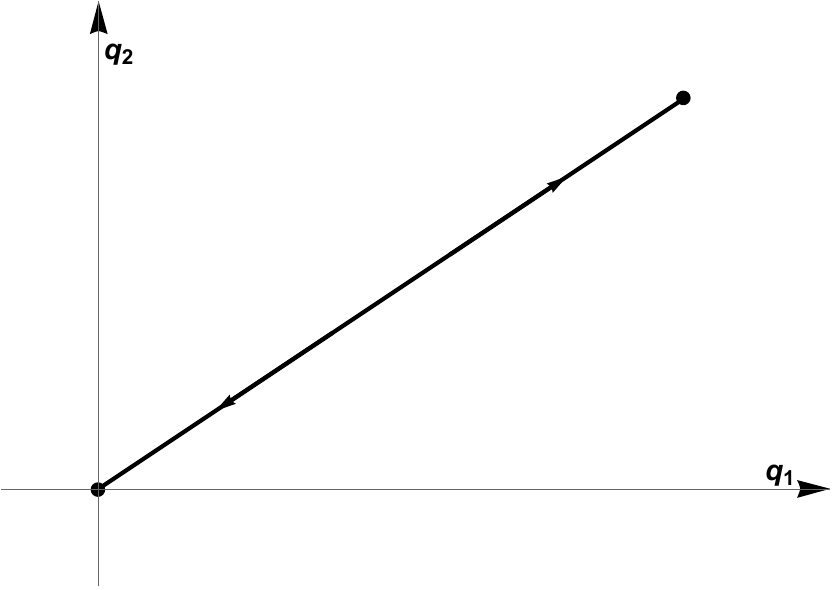}
	\caption{Collision orbit in the $q_1q_2$-plane.}
	\label{Figure : Collision Orbit}
\end{figure}

Note that this parametrization of the collision orbit corresponds to a reparametrization of the Kepler orbit, so the orbital speed differs.
In particular, the period of $\gamma$ as a Kepler orbit is not equal to $2\pi r$.
The energy hypersurface $K_r^{-1}(1/2)$, which now contains the collision orbits, is given by $S_rT^*S^3$.

\begin{note}\rm
	For collision orbits, $q$ and $p$ are always parallel.
	It follows that $L=0$ and $A=-q/|q|$.
	Even though $A$ is not defined at the origin, we can see that $A$ is constant along the collision orbit, so we can regard $A$ as an invariant after regularization.
	Moreover, regarding the formula in \Cref{Proposition - LRL properties} (4), we conclude that the collision orbit corresponds to the case $\eps=1$, even though the eccentricity of a straight line is infinity.
\end{note}

To regularize the rotating Kepler problem, we first rewrite the Hamiltonian
$$
H = \frac{1}{2}|\tilde{p}|^2 + U(q)
$$
where $\tilde{p} = (p_1 - q_2, p_2 + q_1, p_3)$, and the effective potential $U$ is defined by
	$$ U(q) = -\frac{1}{|q|} - \frac{1}{2}(q_1^2+q_2^2).$$
Let $\pi: T^*(\R^3 \setminus \{0\}) \to \R^3 \setminus \{0\}$ denote the canonical projection.
For a fixed energy level $c$, the \textbf{Hill's region} is defined as
$$
\mathcal{H}_c = \left\{q\,:\,U(q)\leq c \right\}.
$$
Since $|\tilde{p}|^2 \geq 0$, any point on the energy surface $H(q,p) \leq c$ must project to $\mathcal{H}_c$.
It can be shown that
\begin{itemize}
	\item If $c<-3/2$, then $\mathcal{H}_c$ consists of one bounded component (diffeomorphic to a 3-ball) and one unbounded component.
	\item If $c>-3/2$, then $\mathcal{H}_c = \R^3 \setminus \{0\}$.
\end{itemize}
Thus, the energy $c = -3/2$ is called the \textbf{critical energy}.
We are particularly interested in orbits with $c<-3/2$, lying within the bounded component of $\mathcal{H}_c$.

\begin{figure}[ht]
	\centering
	\includegraphics[width=6cm]{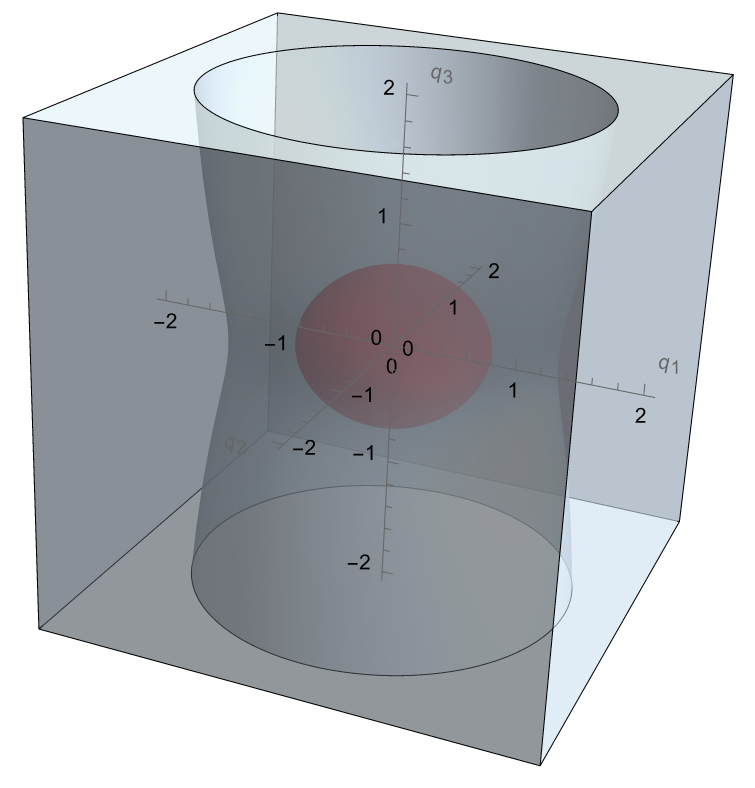}
	\caption{Hill's region. The inner red ball represents the bounded component.}
	\label{Figure : Hills region}
\end{figure}

Fix a Jacobi energy level $c < -3/2$.
We now apply Moser regularization to the spatial rotating Kepler problem, restricted to the bounded component of the Hill's region.
As before, the regularized Hamiltonian on $T^*S^3$ is
$$
K_{c}(x,y) = \frac{1}{2}|y|^2\left(r+\frac{1}{r^2}(1-x_0)(x_1 y_2 - x_2 y_1)\right)^2,
$$
where $r = \sqrt{-2c}$.
It is shown in \cite{Ceiliebak_Frauenfelder_vanKoert_14} that $K_c$ defines a Finsler geodesic flow on $T^*S^3$, and that the Hamiltonian flow of the rotating Kepler problem is embedded within the level set $K_c^{-1}(1/2)$.
As before, the energy hypersurface is diffeomorphic to the unit cotangent bundle $ST^*S^3$.


\section{Periodic Orbits of the Rotating Kepler Problem}\label{Section - Periodic Orbits}


\subsection{Classification}\label{Subsection - Classification of periodic orbits}

As discussed in \Cref{Subsection - Invariants}, we have $\{E, L_3\} = 0$. It follows that the Hamiltonian flow of $H$ satisfies
$$
Fl^{X_H}_t = Fl^{X_E}_t \circ Fl^{X_{L_3}}_t,
$$
where $X_H$ is a Hamiltonian vector field of $H$, and $Fl^{X}$ denotes the flow of a vector field $X$.
Since the flows of both $E$ and $L_3$ are known, we can analyze the orbits of $H$ in a straightforward manner.

\subsubsection*{Planar Circular Orbits}

There are two types of Kepler orbits that remain periodic in the rotating Kepler problem for a generic Jacobi energy level $c$.
The first type is the \textbf{planar circular orbit}.
Since the flow of $L_3$ corresponds to a rotation in the $q_3$- and $p_3$-directions, a circular orbit confined to the $q_1q_2$-plane becomes periodic after composing it with the $L_3$-flow.

To construct such a planar circular orbit at the energy level $H = c$, we require
$$
\varepsilon^2 = 2E|L|^2 + 1 = 2EL_3^2 + 1 = 2E(c - E)^2 + 1 = 0.
$$
As described in \Cref{Figure : PC orbit graphs}, a straightforward computation shows that
\begin{itemize}
	\item If $c < -3/2$, there are three possible values of $E$ satisfying the above equation.
	\item If $c > -3/2$, only one such value exists.
\end{itemize}
Assume $c < -3/2$, so that three distinct circular orbits exist.
\begin{enumerate}
	\item The \textbf{retrograde orbit}, denoted by $\gamma_+$, corresponds to the smallest Kepler energy. It satisfies $L_3 > 0$, so it has a smaller radius and rotates counterclockwise.
	From \Cref{Figure : PC orbit graphs}, we see that the retrograde orbit is the only one that persists beyond the critical energy level $c = -3/2$.
	\item The \textbf{direct orbit}, denoted by $\gamma_-$, corresponds to the second smallest Kepler energy. Here $L_3 < 0$, leading to a larger radius and clockwise rotation.
	\item The \textbf{outer direct orbit}, which has the largest Kepler energy, lies outside the bounded component of the Hill's region.
	Since our focus is on the bounded component under Moser regularization, we do not consider this orbit further.
\end{enumerate}

\begin{figure}[ht]
	\centering
	\includegraphics[width=7cm]{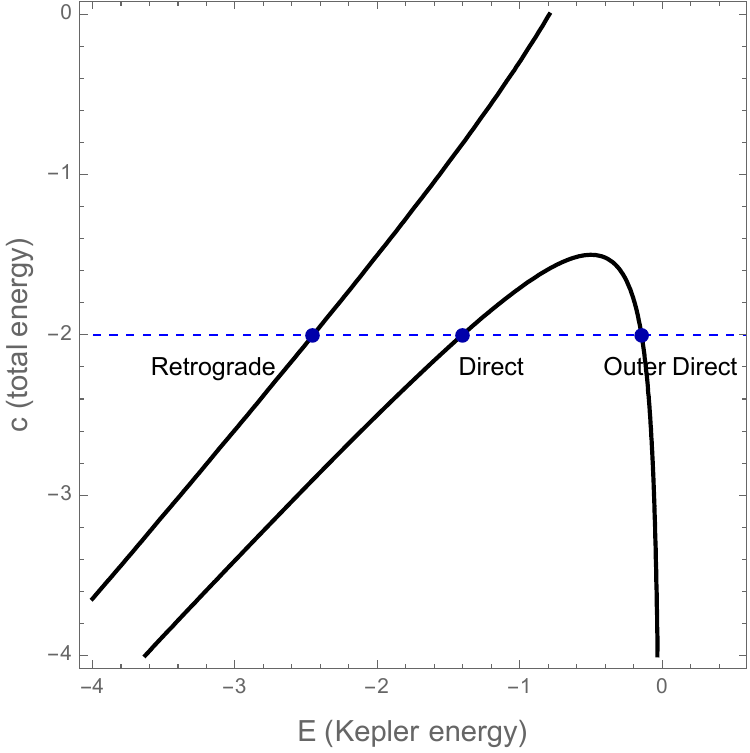}
	\caption{Graph of $2E(c - E)^2 + 1 = 0$.}
	\label{Figure : PC orbit graphs}
\end{figure}

\begin{lmm}\label{Lemma - Periods of Retrograde and Direct}
	Let $E_\pm$ and $\tau_\pm$ denote the Kepler energy and the period of the retrograde orbit $\gamma_+$ and the direct orbit $\gamma_-$, respectively, for a given Jacobi energy $c$. Then,
	$$
	c = E_\pm \pm \frac{1}{\sqrt{-2E_\pm}},\quad \tau_\pm = \frac{2\pi}{(-2E_\pm)^{3/2} \pm 1}.
	$$
	Moreover, an explicit parametrization of $\gamma_\pm$ under the cylindrical coordinate system on $T^*\R^3\setminus\{0\}$ is given by
	$$
	\gamma_\pm(t) = \begin{pmatrix}
		r(t)\\
		\theta(t)\\
		z(t)\\
		p_r(t)\\
		p_\theta(t)\\
		p_z(t)
	\end{pmatrix} = \begin{pmatrix}
		\omega_0^2\\
		\left(\frac{1}{\omega_0^3} + 1\right)t\\
		0\\
		0\\
		\omega_0\\
		0
	\end{pmatrix},\quad \omega_0 = \pm \frac{1}{\sqrt{-2E_\pm}}.
	$$
\end{lmm}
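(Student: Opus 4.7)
\medskip\noindent\textbf{Proof plan.} The plan is to reduce everything to two classical facts about circular Kepler orbits---that the eccentricity vanishes and Kepler's third law---and then combine them with the rotating-frame structure encoded by $L_3$.

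First, a planar circular orbit lies in the $q_1q_2$-plane and has eccentricity zero, so $L=(0,0,L_3)$, and by \Cref{Proposition - LRL properties}(4) one has
$$
0 \;=\; |A|^2 \;=\; 2E\,L_3^2 + 1,
$$
which gives $|L_3| = 1/\sqrt{-2E}$. The Jacobi constraint $c = H = E + L_3$, together with the sign of $L_3$ (positive for the retrograde $\gamma_+$, negative for the direct $\gamma_-$), then yields the stated identity $c = E_\pm \pm 1/\sqrt{-2E_\pm}$.

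Second, for the period I would use the commutativity $Fl^{X_H}_t = Fl^{X_E}_t \circ Fl^{X_{L_3}}_t$ that follows from $\{E, L_3\} = 0$. Restricted to a circular Kepler orbit, $Fl^{X_E}_t$ is an in-plane rotation at signed angular velocity $\mathrm{sgn}(L_3)(-2E)^{3/2}$ (by Kepler's third law), while $Fl^{X_{L_3}}_t$ is a counterclockwise rotation at unit rate. The orbit closes at the first positive time when the composed rotation is an integer multiple of $2\pi$, giving
$$
\tau_\pm \;=\; \frac{2\pi}{(-2E_\pm)^{3/2}\pm 1}.
$$
One must also verify that the direct-branch denominator is positive: for $c<-3/2$, the graph in \Cref{Figure : PC orbit graphs} shows that $E_- < -1/2$, hence $(-2E_-)^{3/2} > 1$.

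Third, the explicit cylindrical parametrization is then a direct verification. Planar circularity forces $p_r = z = p_z = 0$, while the canonical pairing gives $p_\theta = L_3 = \omega_0$. From $E = -1/(2r_0)$ for a circular Kepler orbit one has $r_0 = 1/(-2E_\pm) = \omega_0^2$, and $\theta(t)$ is linear with slope equal to the total angular velocity computed above. With $\omega_0 = \pm 1/\sqrt{-2E_\pm}$ one computes $1/\omega_0^3 = \pm(-2E_\pm)^{3/2}$, so both cases collapse into the uniform expression $\theta(t) = (1/\omega_0^3 + 1)t$. The main obstacle throughout is sign bookkeeping: correctly orienting the Kepler rotation relative to the unit counterclockwise rotation generated by $L_3$, so that retrograde and direct cases both fit into the single formula $1/\omega_0^3 + 1$ under the sign convention $\omega_0 = \pm 1/\sqrt{-2E_\pm}$.
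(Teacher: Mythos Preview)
Your proposal is correct and reaches the same conclusions as the paper, with only a small organizational difference. The paper derives the period and parametrization by writing the Hamiltonian explicitly in cylindrical coordinates, computing $X_H$, and then imposing the planar circular constraints $z=p_z=p_r=0$, $r=r_0$, which force $p_\theta^2=r_0$ and reduce $X_H$ to $(1/\omega_0^3+1)\partial_\theta$; the period and parametrization are then read off directly. You instead use the flow decomposition $Fl^{X_H}_t=Fl^{X_E}_t\circ Fl^{X_{L_3}}_t$ together with Kepler's third law to obtain the angular velocity, and recover $r_0=\omega_0^2$ from $E=-1/(2r_0)$. Both routes are equivalent here; yours is slightly more conceptual and leans on facts already established in the paper, while the paper's is a self-contained local computation that also verifies, rather than assumes, the circular-orbit radius relation.
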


\begin{proof}
	For circular orbits, the condition $\varepsilon^2 = 2EL_3^2 + 1 = 0$ implies $L_3 = \pm 1/\sqrt{-2E}$, yielding the first identity.
	
	To compute the period, we use cylindrical coordinates
	$$(q_1, q_2, q_3) = (r\cos\theta, r\sin\theta, z),$$
	in which the Hamiltonian becomes:
	$$
	H(r, \theta, z; p_r, p_\theta, p_z) = \frac{1}{2} \left(p_r^2 + \frac{p_\theta^2}{r^2} + p_z^2\right) - \frac{1}{\sqrt{r^2 + z^2}} + p_\theta.
	$$
	The associated Hamiltonian vector field is
	$$
	X_H = p_r\partial_r + \left(\frac{p_\theta}{r^2} + 1\right)\partial_\theta + p_z\partial_z + \left(\frac{p_\theta^2}{r^3} - \frac{r}{(r^2 + z^2)^{3/2}}\right)\partial_{p_r} - \frac{z}{(r^2 + z^2)^{3/2}}\partial_{p_z}.
	$$
	Assuming a planar ($z = p_z = 0$) and circular orbit ($r = r_0$, $p_r = 0$, $p_\theta^2 = r_0$), we write $\omega_0 = \pm \sqrt{r_0}$ and note that $r_0 = -1/2E$. Then,
	$$
	X_H = \left(\frac{1}{\omega_0^3} + 1\right)\partial_\theta,
	$$
	which yields the stated period and parametrization.
\end{proof}
\subsubsection*{Vertical Collision Orbits}

The second type of periodic orbit consists of the vertical collision orbits, which arise in the Moser regularization. Although finding general solutions of the Hamiltonian equations associated with the regularized Hamiltonian $K_c$ is difficult, a substantial simplification occurs for collision orbits starting from initial conditions of the form
$$
(p(0);q(0)) = (0,0,P;\,0,0,Q).
$$
Under these conditions, the equations of motion reduce to
$$
\begin{pmatrix}
	\dot{x}_0\\
	\dot{x}_3\\
	\dot{y}_0\\
	\dot{y}_3
\end{pmatrix}
=
\begin{pmatrix}
	r^2 y_0\\
	r^2 y_3\\
	- x_0\\
	- x_3
\end{pmatrix},
$$
with $x_1 = x_2 = y_1 = y_2 = 0$ and $r=\sqrt{-2E}$.

These orbits are unaffected by the angular momentum $L_3$ and correspond to vertical collision orbits in the original Kepler problem, with initial conditions $q_3(0) = \mp 1/E$ and $p_3(0) = 0$. We denote these orbits by $\gamma_{c_\pm}$, referring to them as the \textbf{positive} and \textbf{negative vertical collision orbits}. These, together with the planar circular orbits, are shown in \Cref{Figure : Nondegenerate orbits}.

Note that the angular momentum of each collision orbit is zero, which implies $c = E$ for these orbits. As a result, they can be represented as points along the diagonal $c=E$ in the $(c,E)$-plane, as seen in \Cref{Figure : Orbit Diagram}. For generic energy levels $c$, the vertical collision orbits are isolated.

\begin{figure}[ht]
	\centering
	\begin{subfigure}[b]{0.45\textwidth}
		\centering
		\includegraphics[width=\textwidth]{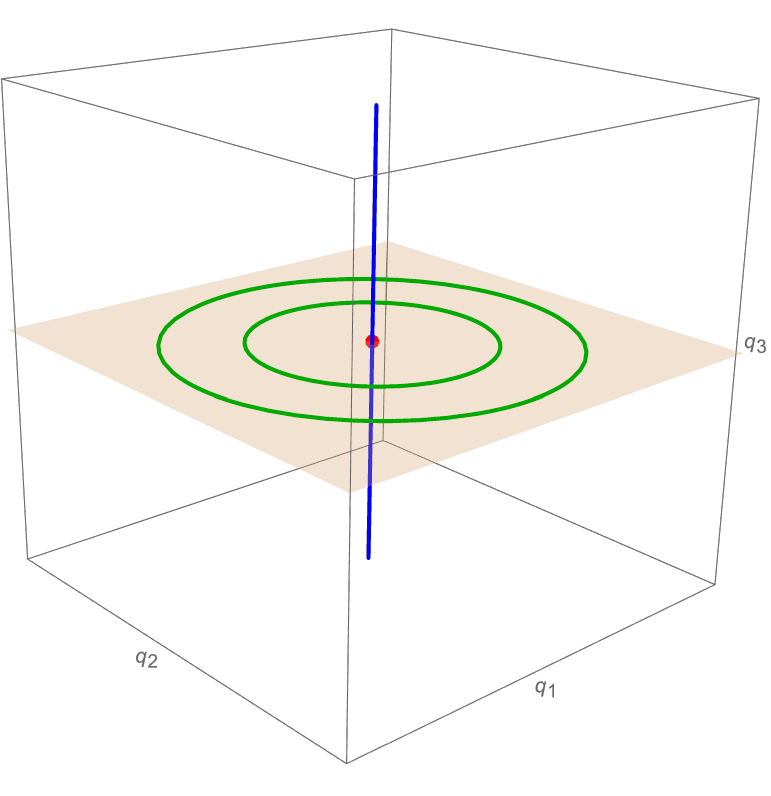}
		\caption{Planar circular orbits (green) and vertical collision orbits (blue).}
		\label{Figure : Nondegenerate orbits}
	\end{subfigure}
	\hfill
	\begin{subfigure}[b]{0.45\textwidth}
		\centering
		\includegraphics[width=\textwidth]{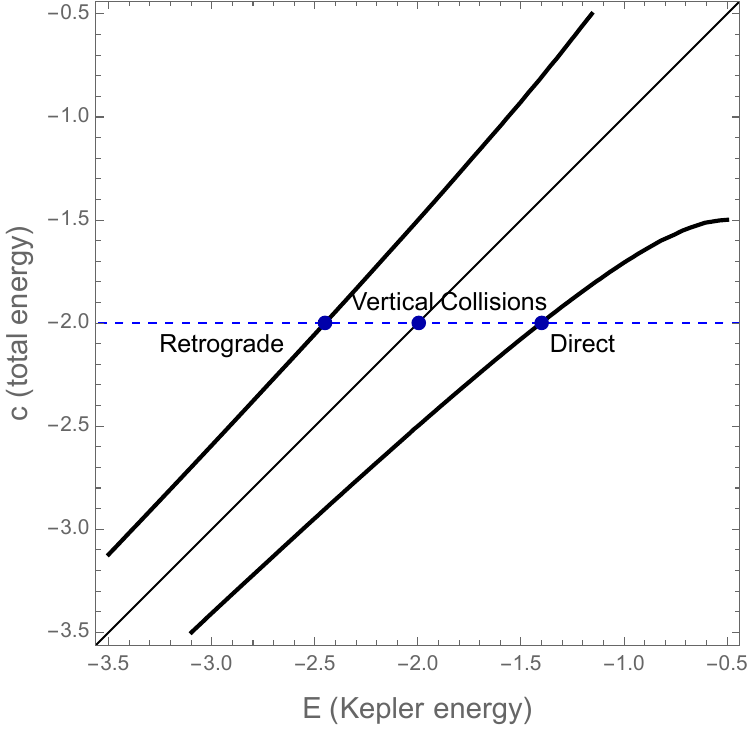}
		\caption{Non-degenerate orbits marked in the $(c,E)$-plane.}
		\label{Figure : Orbit Diagram}
	\end{subfigure}
	\caption{Non-degenerate periodic orbits.}
	\label{Figure : CombinedOrbits}
\end{figure}

\subsubsection*{Morse-Bott Family}

The remaining orbits are generically not periodic, but they may become periodic when a resonance condition is satisfied. According to Kepler’s third law, the period $\tau$ of the $E$-flow is given by
$$
\tau = \frac{2\pi}{(-2E)^{3/2}}.
$$
On the other hand, the period of the $L_3$-flow is $2\pi$. Since the total flow decomposes as $Fl^{X_H}_t = Fl^{X_E}_t \circ Fl^{X_{L_3}}_t$, a resonant orbit arises when there exist $k, l \in \N$ such that $k\tau = 2\pi l$, which implies
$$
E = E_{k,l} = -\frac{1}{2} \left(\frac{k}{l}\right)^{2/3}.
$$

That is, if a Kepler orbit has Kepler energy $E_{k,l}$, then its $k$-fold cover composed with the $l$-fold iteration of the $L_3$-flow becomes periodic. Once $E_{k,l}$ is fixed, the angular momentum $L_3$ is determined by $c - E_{k,l}$. The family of orbits with the same $E$ and $L_3$ forms a family.
Such family is a Morse-Bott family diffeomorphic to $S^3\times S^1$, which will be verified in \Cref{Subsection - Moduli} and \Cref{Subsection - Morse-Bott property}.
We denote such a family by $\Sigma_{k,l}$.
Examples of these orbits are illustrated in \Cref{Figure : Degenerate Orbits}.\footnotemark[4]

\begin{figure}[ht]
	\centering
	\begin{subfigure}[b]{0.45\textwidth}
		\centering
		\includegraphics[width=\textwidth]{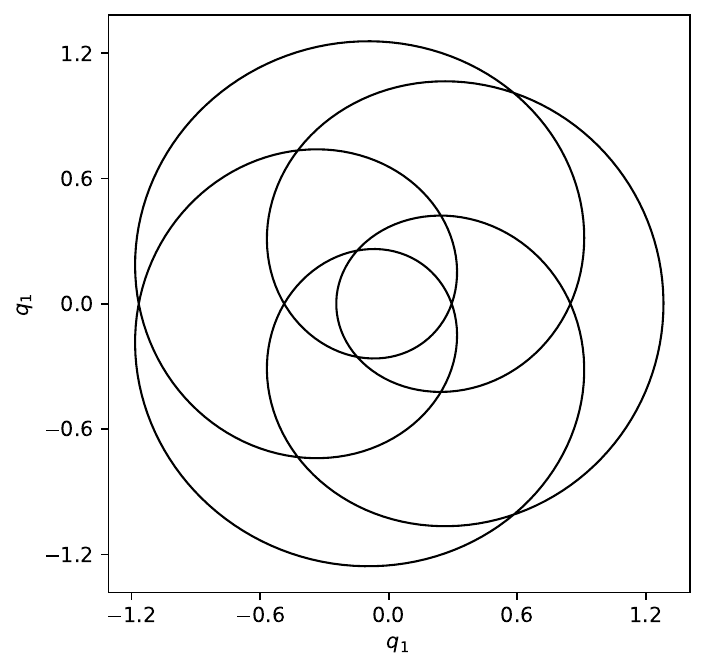}
	\end{subfigure}
	\hfill
	\begin{subfigure}[b]{0.45\textwidth}
		\centering
		\includegraphics[width=\textwidth]{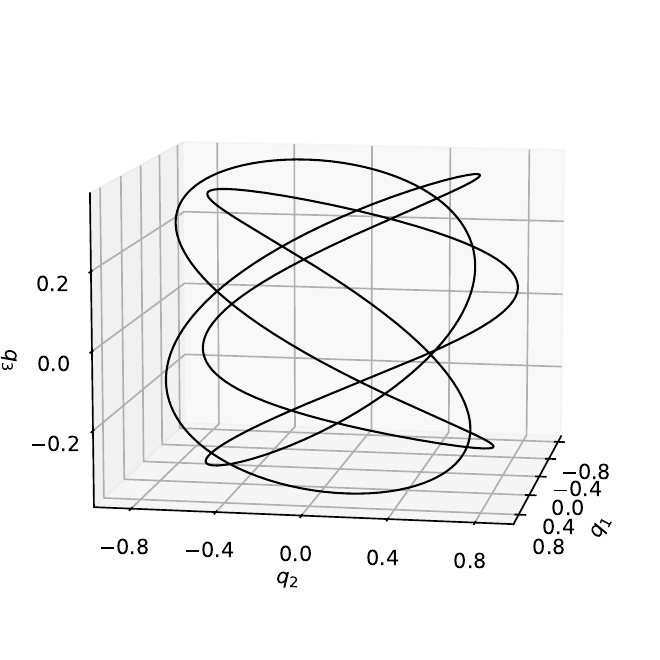}
	\end{subfigure}
	\caption{$\Sigma_{k,l}$-type orbits in the plane and in space.}
	\label{Figure : Degenerate Orbits}
\end{figure}

\footnotetext[4]{Figure courtesy of Chankyu Joung.}.

To summarize, fix a generic energy level $c$, which means that $c \neq E_{k,l}$ and $c\neq E_{k,l}\pm 1/\sqrt{-2E_{k,l}}$ for any $k,l \in \N$.
Under this assumption, the orbits $\gamma_\pm$ and $\gamma_{c_\pm}$ are not contained in any $\Sigma_{k,l}$ family and are thus isolated.

Then, the energy hypersurface $H^{-1}(c)$ contains the following periodic orbits and their multiple covers:
\begin{enumerate}
	\item Two planar circular orbits $\gamma_\pm$ with Kepler energies $E_\pm$ satisfying
	$$
	E_\pm \pm \frac{1}{\sqrt{-2E_\pm}} = c.
	$$
	\item Two vertical collision orbits $\gamma_{c_\pm}$ with Kepler energy $E = c$.
	\item Morse-Bott family $\Sigma_{k,l}$ for each pair $(k,l)$ such that $E_+ < E_{k,l} < E_-$.
\end{enumerate}
This structure is illustrated in \Cref{Figure : Energy Hypersurface}.

\begin{figure}[ht]
	\centering
	\includegraphics[width=7cm]{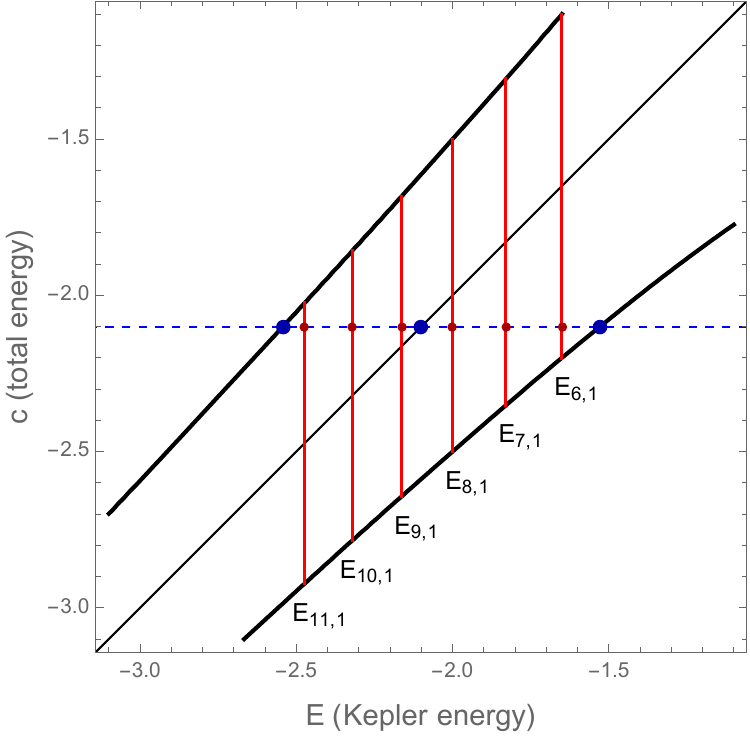}
	\caption{The energy hypersurface $H^{-1}(-2.1)$ containing four isolated orbits indicated by blue dots, and a countable collection of $\Sigma_{k,l}$-families, partially indicated by red dots.}
	\label{Figure : Energy Hypersurface}
\end{figure}


\subsection{Moduli Space of Kepler Orbits}\label{Subsection - Moduli}

The regularized Kepler problem can be identified with the geodesic flow on $T^*S^3$.
Hence, the total orbit space corresponds to the unit cotangent bundle $ST^*S^3$, which is diffeomorphic to $S^3 \times S^2$.
This implies that the moduli space of Kepler orbits can be identified with the space of unit geodesics on $S^3$.
Similarly, the moduli space of the planar Kepler orbits can be identified with the space of unit geodesics on $S^2$.
A classical result characterizes the moduli space of unit geodesics on the spheres.

\begin{thm}\label{Theorem - Moduli space of unit geodesics of spheres}
	The moduli space of unit geodesics on $S^2$ is diffeomorphic to $S^2$, and on $S^3$ it is diffeomorphic to $S^2 \times S^2$.
\end{thm}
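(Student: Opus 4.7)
The plan is to identify the moduli space of unit-speed geodesics on $S^n$ with the oriented Grassmannian $\widetilde{\mathrm{Gr}}(2,n+1)$ of oriented $2$-planes in $\R^{n+1}$, and then handle the cases $n=2$ and $n=3$ separately. A unit-speed geodesic on $S^n \subset \R^{n+1}$ traces a great circle, namely the intersection of $S^n$ with a $2$-plane through the origin. A parametrized unit-speed geodesic is uniquely specified by an initial pair $(x,v) \in ST^*S^n$, and two such pairs represent the same geodesic up to time translation precisely when they span the same oriented $2$-plane. Hence the assignment $(x,v) \mapsto \mathrm{span}(x,v)$, oriented by the basis $(x,v)$, descends to a smooth bijection from the moduli space to $\widetilde{\mathrm{Gr}}(2,n+1)$, and a dimension count plus local triviality of the geodesic flow upgrades this to a diffeomorphism.

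For $n=2$, an oriented $2$-plane in $\R^3$ is characterized by its positively oriented unit normal via the right-hand rule, yielding $\widetilde{\mathrm{Gr}}(2,3)\cong S^2$ immediately. The case $n=3$ goes through the Pl\"ucker embedding combined with the Hodge decomposition. The Hodge star on $\R^4$ is an involution on $\Lambda^2\R^4$, producing an orthogonal splitting $\Lambda^2\R^4 = \Lambda^2_+ \oplus \Lambda^2_-$ into $3$-dimensional self-dual and anti-self-dual summands. Given an oriented $2$-plane with oriented orthonormal basis $(u,v)$, the unit simple $2$-vector $\omega = u\wedge v$ is well defined, and writing $\omega = \omega_+ + \omega_-$ a short computation using $\alpha \wedge \ast\beta = \langle\alpha,\beta\rangle\,\mathrm{vol}$ gives $\omega\wedge\omega = (|\omega_+|^2 - |\omega_-|^2)\,\mathrm{vol}$. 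The decomposability condition $\omega\wedge\omega=0$ therefore reads $|\omega_+|=|\omega_-|$, and combined with $|\omega|^2=1$ this places $(\omega_+,\omega_-)$ on a product of $2$-spheres of radius $1/\sqrt{2}$. Conversely, any equal-norm pair in the two summands assembles into a decomposable unit $2$-vector, so the resulting map $\widetilde{\mathrm{Gr}}(2,4) \to S^2\times S^2$ is a diffeomorphism.

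The only non-routine step is the $n=3$ case: it genuinely exploits the low-dimensional accident that $\mathrm{rank}\,\Lambda^2\R^4 = 6$ splits evenly under the Hodge star, and for $n\geq 4$ the oriented Grassmannian is no longer a product of spheres. An equivalent but heavier path would be to realize $\widetilde{\mathrm{Gr}}(2,4) \cong SO(4)/(SO(2)\times SO(2))$ and invoke the quaternionic double cover $S^3\times S^3 \to SO(4)$ to produce the same $S^2\times S^2$ structure, but the Hodge-star route is the most direct and fits naturally with the $L,A$ parametrization used elsewhere in the paper.
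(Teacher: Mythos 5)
Your argument is correct. Note, however, that the paper does not actually prove this statement: it simply cites Besse, Propositions 2.9 and 2.10, so you are supplying the content that the reference contains. Your route is the standard one and it is sound: the moduli space of oriented unparametrized great circles is the oriented Grassmannian $\widetilde{\mathrm{Gr}}(2,n+1)$ (the $S^1$-action of the geodesic flow on $ST^*S^n$ is free because every geodesic has minimal period $2\pi$, and the resulting bijection is a diffeomorphism by $SO(n+1)$-equivariance); the case $n=2$ is the unit normal, and the case $n=3$ is the Pl\"ucker embedding together with the self-dual/anti-self-dual splitting $\Lambda^2\R^4=\Lambda^2_+\oplus\Lambda^2_-$, with decomposability $\omega\wedge\omega=0$ forcing $|\omega_+|=|\omega_-|=1/\sqrt{2}$. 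One small point worth making explicit is that a unit decomposable $2$-vector determines and is determined by an oriented $2$-plane, so the map to $S^2\times S^2$ really is a bijection and not merely a surjection. What your approach buys beyond the citation is that it makes the later Theorem on the parametrization $\gamma\mapsto(\sqrt{-2E}\,L-A,\;\sqrt{-2E}\,L+A)$ transparent: under Moser regularization the pair $(L,A)$ encodes exactly the $2$-vector $x\wedge y$ of the corresponding great circle on $S^3$, and the two factors $\sqrt{-2E}\,L\mp A$ are, up to normalization, its anti-self-dual and self-dual components, so the paper's explicit bijection is your Hodge-star isomorphism written in terms of conserved quantities.
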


\begin{proof}
	See \cite{Besse_78}, Propositions 2.9 and 2.10.
\end{proof}

We now rederive this result in a manner suited to the Kepler problem.
As shown in \Cref{Proposition - ELA Kepler orbit}, each Kepler orbit is completely characterized by its energy $E$, angular momentum $L$, and Laplace-Runge-Lenz vector $A$.
\begin{remark}\rm
Geometrically, an ellipse in $\mathbb{R}^3$ with one focus at the origin is determined by the orbital plane and orientation, given by $L$, the major axis, given by $A$, and the eccentricity $\varepsilon$, given by $\varepsilon^2 = |A|^2 = 2E|L|^2 + 1$.
In particular, a collision orbit corresponds to the case $\varepsilon = 1$, or equivalently, $L = 0$.
\end{remark}

Let $E < 0$ be the given Kepler energy, and let $\mathcal{M}_E$ denote the level set $K_E^{-1}(1/2)$ of the regularized Hamiltonian $K_E : T^*S^3 \to \mathbb{R}$ modulo the $S^1$-action induced by the Hamiltonian flow.
In other words, a point in $\mathcal{M}_E$ represents a simple Kepler orbit, possibly a collision.
This is the moduli space we aim to describe.

\begin{thm}\label{Theorem - Moduli parametrization}
	The map
	$$
	\begin{aligned}
		\Phi: \mathcal{M}_E &\to S^2 \times S^2 \subset \R^3\times \R^3 \\
		\gamma &\mapsto \left( \sqrt{-2E} L - A, \, \sqrt{-2E} L + A \right)
	\end{aligned}
	$$
	is a well-defined bijection that parametrizes the space of closed Kepler orbits with Kepler energy $E$.
\end{thm}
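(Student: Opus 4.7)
The plan is to verify directly that $\Phi$ lands in $S^2\times S^2$ and descends to $\mathcal{M}_E$, then to exhibit an explicit inverse built from the compatibility relations in \Cref{Proposition - LRL properties}. The one step that genuinely needs care is the collision locus $L=0$: without Moser regularization the image of $\Phi$ would miss the antidiagonal of $S^2\times S^2$, so I must confirm that collision orbits enlarge $\mathcal{M}_E$ by exactly the right $S^2$.

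\textbf{Step 1 (well-definedness).} Since $\{E,L_i\}=\{E,A_i\}=0$ by \Cref{Proposition - LRL properties}(i), both $L$ and $A$ are constant along the $K_E$-flow, so $\Phi$ descends to the orbit space $\mathcal{M}_E$. For the target, I would expand
\[
|\sqrt{-2E}\,L\pm A|^2=-2E|L|^2+|A|^2\pm 2\sqrt{-2E}\,(L\cdot A),
\]
and invoke the orthogonality $L\cdot A=0$ from \Cref{Proposition - LRL properties}(ii) together with $|A|^2=2E|L|^2+1$ from \Cref{Proposition - LRL properties}(iv); the cross term vanishes and the remaining terms cancel to $1$, so $\Phi(\mathcal{M}_E)\subseteq S^2\times S^2$.

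\textbf{Step 2 (candidate inverse).} For $(u,v)\in S^2\times S^2$, set
\[
\widehat L=\frac{u+v}{2\sqrt{-2E}},\qquad \widehat A=\frac{v-u}{2},
\]
so that $\sqrt{-2E}\,\widehat L\mp\widehat A$ reproduce $u$ and $v$. A short calculation gives $\widehat L\cdot\widehat A=(|v|^2-|u|^2)/(4\sqrt{-2E})=0$ and $|\widehat A|^2-2E|\widehat L|^2=(|v-u|^2+|v+u|^2)/4=1$, so the pair $(\widehat L,\widehat A)$ satisfies exactly the Kepler compatibility conditions obeyed by any genuine $(L,A)$.

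\textbf{Step 3 (bijectivity, including the collision stratum).} By \Cref{Proposition - ELA Kepler orbit} together with the geometric remark that $(L,A)$ determines the orbital plane, the major axis, and the eccentricity $\eps=|A|$ of the ellipse, a non-collision Kepler orbit of energy $E<0$ is uniquely specified by its invariants $(L,A)$. This yields injectivity of $\Phi$, and the $(\widehat L,\widehat A)$ built in Step 2 produces a unique preimage in $\mathcal{M}_E$ whenever $\widehat L\neq 0$. The locus $\widehat L=0$ is precisely the antidiagonal $\{(-A,A):A\in S^2\}\subset S^2\times S^2$ and corresponds to the eccentricity-one degenerate ellipses; here I would invoke the Moser-regularized picture of \Cref{Subsection - Moser regularization}, in which each such degenerate trajectory is a bona fide single $K_E$-orbit labelled by the unit direction $\widehat A\in S^2$ along which the body falls into the origin and returns. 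This identifies the collision stratum of $\mathcal{M}_E$ with $S^2$ compatibly with $\Phi$, handles the main obstacle, and closes the bijection.
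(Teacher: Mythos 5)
Your proof is correct and follows essentially the same route as the paper's: the identical norm computation using $L\cdot A=0$ and $|A|^2=2E|L|^2+1$ for well-definedness, and the identical inverse formulas $L=(u+v)/(2\sqrt{-2E})$, $A=(v-u)/2$. You additionally verify that the candidate inverse satisfies the Kepler compatibility constraints and you treat the collision locus $L=0$ (where the orbit formula of \Cref{Proposition - ELA Kepler orbit} degenerates) explicitly via the Moser-regularized picture --- details the paper's proof leaves implicit.
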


\begin{proof}
	Let $(x, y) = \left( \sqrt{-2E}L - A, \sqrt{-2E}L + A \right)$.
	Then we compute
	$$
	\begin{aligned}
		|x|^2 &= -2E|L|^2 + |A|^2 - 2\sqrt{-2E} (L\cdot A)  = -2E|L|^2 + |A|^2 = 1, \\
		|y|^2 &= -2E|L|^2 + |A|^2 + 2\sqrt{-2E} (L \cdot A) = -2E|L|^2 + |A|^2 = 1.
	\end{aligned}
	$$
	Hence, $(x, y) \in S^2 \times S^2$.
	Given $(x, y)$, we recover
	$$
	L = \frac{x + y}{2\sqrt{-2E}}, \qquad A = -\frac{x - y}{2}.
	$$
	From $L$ and $A$, the orbit $\gamma$ can be uniquely reconstructed via \Cref{Proposition - ELA Kepler orbit}.
\end{proof}

This explicit parametrization $\mathcal{M}_E \cong S^2 \times S^2$ allows us to describe special classes of orbits as follows.

\begin{enumerate}
	\item Circular orbits correspond to the diagonal $\{ x = y \}$, i.e., $A = 0$, forming an $S^2$-family.
	
	\item Collision orbits correspond to the anti-diagonal $\{ x = -y \}$, i.e., $L = 0$, also forming an $S^2$-family.
	
	\item Planar orbits satisfy $L_1 = L_2 = A_3 = 0$, which gives
	$$
	\mathcal{N}_E = \left\{ (x, y) \in S^2 \times S^2 : x_1 = -y_1,\; x_2 = -y_2,\; x_3 = y_3 \right\} \cong S^2.
	$$
	
	\item \textbf{Vertical orbits}, orthogonal to the $q_1q_2$-plane, satisfy $A_1 = A_2 = L_3 = 0$, i.e.,
	$$
	\mathcal{V}_E = \left\{ (x, y) \in S^2 \times S^2 : x_1 = y_1,\; x_2 = y_2,\; x_3 = -y_3 \right\} \cong S^2.
	$$
	
	\item Retrograde and direct circular orbits correspond to
	$$
	\gamma_+ = ((0,0,1), (0,0,1)), \quad \gamma_- = ((0,0,-1), (0,0,-1)).
	$$
	
	\item Vertical collision orbits correspond to
	$$
	\gamma_{c_+} = ((0,0,1), (0,0,-1)), \quad \gamma_{c_-} = ((0,0,-1), (0,0,1)).
	$$
\end{enumerate}

We now turn to the topology of $\mathcal{M}_E$.
Let us view the third component of angular momentum, $L_3$, as a Morse function on $\mathcal{M}_E$
$$
\begin{aligned}
	L_3: \mathcal{M}_E &\to \mathbb{R} \\
	(x, y) &\mapsto \frac{x_3 + y_3}{2\sqrt{-2E}}.
\end{aligned}
$$

Then $L_3$ is a Morse function with exactly four critical points,
\begin{enumerate}
	\item $\gamma_-$: the direct orbit - global minimum,
	\item $\gamma_{c_\pm}$: the vertical collision orbits - index 2 saddle points,
	\item $\gamma_+$: the retrograde orbit - global maximum.
\end{enumerate}

From the perspective of Morse theory, each regular level set of $L_3$ is diffeomorphic to $S^3$.
Thus, the family $\Sigma_{k,l}$ of periodic orbits corresponding to energy $E_{k,l}$ and angular momentum $L_3 = c - E_{k,l}$, discussed in \Cref{Subsection - Classification of periodic orbits}, is topologically $S^3$.

\begin{remark}\rm
	\begin{enumerate}
		\item The third component of the Laplace-Runge-Lenz vector, $A_3$, also defines a Morse function on $\mathcal{M}_E$.
		Its critical points are the same as those of $L_3$, but the indices differ:
		$\gamma_{c_-}$ is the minimum, $\gamma_{c_+}$ is the maximum, and $\gamma_\pm$ are saddles.
		
		\item We can regard $A_3$ also as a Morse function on each regular level set $L_3^{-1}(a)$.
		In this case, the level sets of $A_3$ are tori $T^2$ except at two extrema, where they degenerate to $S^1$.
		The singular level set $L_3^{-1}(0)$ has $\gamma_{c_\pm}$ as isolated extrema and is homeomorphic to the suspension of $T^2$, which is not a manifold.
		
		\item These observations suggest a toric-like structure on $\mathcal{M}_E$, as illustrated in \Cref{Figure : Toric figure}.
		One may view $(L_3, A_3)$ as a moment map for a $T^2$-action on $\mathcal{M}_E$,
		where the action corresponding to $L_3$ arises from rotations about the $q_3$-axis, and that for $A_3$ reflects a \emph{hidden symmetry}.
	\end{enumerate}
\end{remark}

\begin{figure}[ht]
	\centering
	\includegraphics[width=9cm]{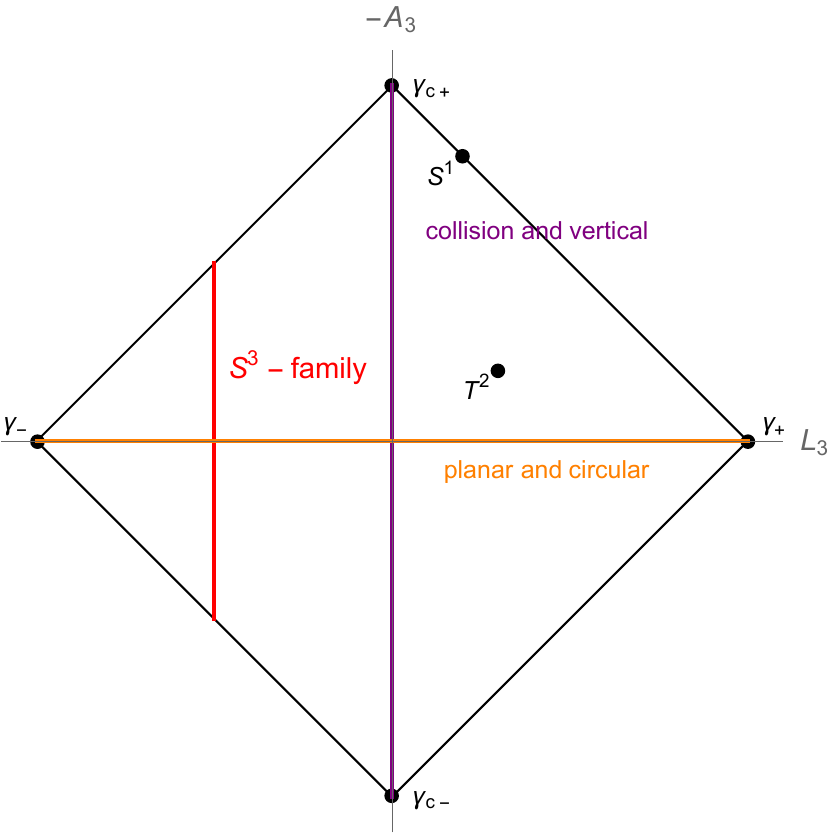}
	\caption{Toric-style diagram of the moduli space $\mathcal{M}_E$.}
	\label{Figure : Toric figure}
\end{figure}

\begin{remark}\rm
	There is a kind of duality between $L_3$ and $A_3$.
	If we define the modified Hamiltonian
	$$
	H_A = E + A_3,
	$$
	then $A_3$ plays the role of a new energy function on $\mathcal{M}_E$, with $\gamma_{c_\pm}$ as extrema and $\gamma_\pm$ as saddles.
	This suggests an alternative Kepler-type system with the same non-degenerate orbits but different Morse-Bott families, still diffeomorphic to $S^3$.
	Here, circular and collision orbits switch roles, and the hidden symmetry becomes more prominent.
\end{remark}

\Cref{Figure : Bifurcation diagram} shows a bifurcation of the Kepler orbits.
As $c$ increases, a family $\Sigma_{k,l}$ is born at the $(k-l)$-th iterate of the direct orbit and dies at the $(k+l)$-th iterate of the retrograde orbit.
The bifurcation pattern closely resembles that in the planar case discussed in \cite{Albers_Fish_Frauenfelder_vanKoert_13}.
However, since $L_3^{-1}(0)$ is not a manifold, this bifurcation has a singularity at $L_3=0$ and the families with positive and negative $L_3$ should be treated separately.

\begin{figure}[ht]
	\centering
	\includegraphics[width=7cm]{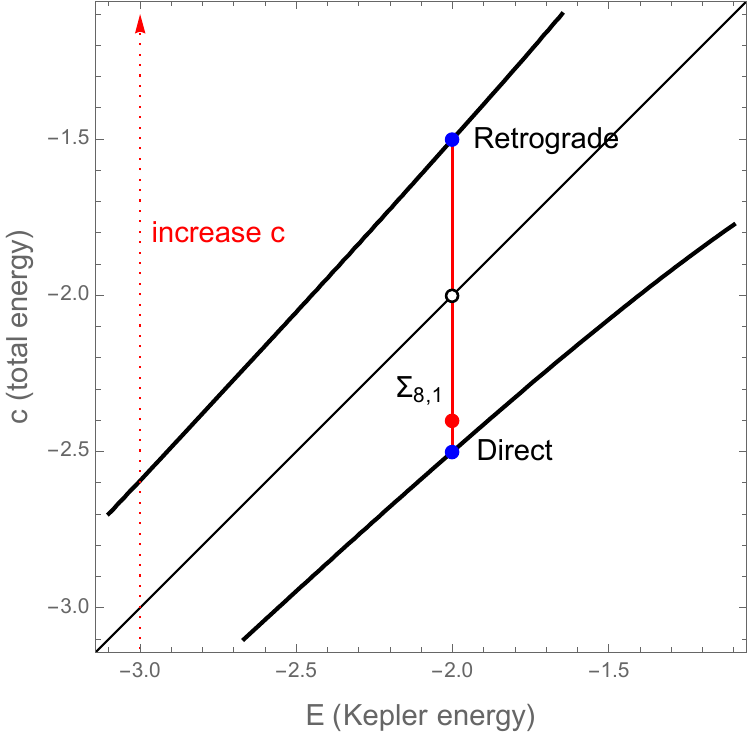}
	\caption{Bifurcation diagram at $E_{8,1}$.}
	\label{Figure : Bifurcation diagram}
\end{figure}

\section{Indices of Periodic Orbits}\label{Section - CZ index of periodic orbits}

\subsection{Conley-Zehnder Index and Robbin-Salamon Index}\label{Subsection - CZ Index}

Let $Q$ be a quadratic form defined on the vector space $V$.
In an appropriate basis, $Q$ can be represented as a diagonal matrix.
Let $n_+$, $n_0$ and $n_-$ denote the counts of positive, zero and negative entries respectively.
The \textbf{signature} of $Q$ is defined as
$$
\Sign(Q) = n_+ - n_-.
$$

Let $\Psi:[0,\tau]\to Sp(2n)$ be a path of symplectic matrices with $\Psi(0)=\Id$. A point $t\in [0,\tau]$ is called a \textbf{crossing} if $\ker(\Psi(t)-\Id)\neq0$. The \textbf{crossing form} is a quadratic form defined on the vector space $V_t = \ker(\Psi(t)-\Id)$ as follows
$$
Q_t(v,v) = \o(v,\dot{\Psi}(t)v),
$$
where $\o$ is a symplectic form on $V_t$.
Using a symplectic basis $\{v_1,w_1,\cdots,v_n,w_n\}$ where $\o(v_i,w_j)=\delta_{ij}$, the crossing form can be expressed as
$$
Q_t = \Omega\dot{\psi}(t) = 
\mathrm{diag}\left(
\begin{pmatrix}
	0&1\\
	-1&0
\end{pmatrix},
\cdots,
\begin{pmatrix}
	0&1\\
	-1&0
\end{pmatrix}
\right)
\dot{\psi}(t).
$$
Assume the crossings are regular, which means that the crossing forms are non-degenerate.
It follows that the crossings are isolated.
The \textbf{Robbin-Salamon index}, introduced in \cite{Robbin_Salamon_93}, for $\Psi$ is given by
$$
\mu_{RS}(\psi) = \frac{1}{2}\Sign(Q_0) + \sum_t \Sign(Q_t) + \frac{1}{2}\Sign(Q_\tau),
$$
where $\sum_t$ is the sum over all interior crossings.
The Robbin-Salamon index has the following key properties.
\begin{thm}\label{Theorem - RS index axiom}
	Let $\Psi_i:[0,\tau]\to Sp(2n)$ be paths of symplectic matrices.
	The Robbin-Salamon index $\mu_{RS}$ satisfies
	\begin{enumerate}
		\item (\rm{Homotopy Invariance}) If $\Psi_1$ and $\Psi_2$ are homotopic,
		$$\mu_{RS}(\Psi_1) = \mu_{RS}(\Psi_2).$$
		\item (\rm{Additivity}) Let $\Psi_3(t)=\Psi_2(t)\Psi_1(t)$ be the pointwise product of $\Psi_1$ and a loop $\Psi_2$.
		Then,
		$$
		\mu_{RS}(\Psi_3) = \mu_{RS}(\Psi_2)+\mu_{RS}(\Psi_1).
		$$
	\end{enumerate}
\end{thm}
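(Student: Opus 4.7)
The plan is to follow Robbin and Salamon's axiomatic approach, reducing both properties to the case of regular paths where all crossings are nondegenerate. First I would show that any smooth path $\Psi$ can be $C^\infty$-approximated by a \emph{regular} path, meaning that at every crossing $t$, including $t = 0$ and $t = \tau$, the crossing form $Q_t$ is nondegenerate on $V_t = \ker(\Psi(t) - \Id)$. For such a path the crossings are isolated and their signatures depend only on the local germ of $\Psi$, so the sum defining $\mu_{RS}$ is a well-defined half-integer. This reduces both axioms to the regular case.

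\textbf{Step 2 (Homotopy Invariance).} For a homotopy $\Psi_s$ between $\Psi_1$ and $\Psi_2$, I would perturb it so that the crossing locus $\mathcal{C} = \{(s,t) : \det(\Psi_s(t) - \Id) = 0\}$ is a smooth $1$-manifold in $[0,1] \times [0,\tau]$, transverse to the generic slice $\{s\} \times [0,\tau]$. As $s$ varies, $\sum_t \Sign(Q_t)$ can change only when two crossings on the same slice merge at a ``fold'' point of $\mathcal{C}$; a normal-form calculation shows that the two disappearing crossings carry opposite signatures, so the sum is preserved. The half-signature boundary terms are stable because the homotopy may be taken to fix the endpoint values.

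\textbf{Step 3 (Additivity).} Since $\Psi_2$ is a loop at $\Id$, I would exhibit an explicit homotopy from the pointwise product $\Psi_3(t) = \Psi_2(t)\Psi_1(t)$ to the concatenation $\Psi_2 * \Psi_1$ (first $\Psi_2$ on $[0,\tau/2]$, then $\Psi_1$ on $[\tau/2,\tau]$, with time rescaled). The interpolation
\[ H(s,t) = \Psi_2(f_s(t))\,\Psi_1(g_s(t)), \]
with $(f_s, g_s)$ running from $(t,t)$ at $s=0$ to $(\min(2t, \tau), \max(2t - \tau, 0))$ at $s=1$, gives such a homotopy with fixed endpoints $\Id$ and $\Psi_1(\tau)$, using crucially that $\Psi_2(0) = \Psi_2(\tau) = \Id$. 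By Step 2, $\mu_{RS}(\Psi_3) = \mu_{RS}(\Psi_2 * \Psi_1)$, and additivity under concatenation follows directly from the definition after a small smoothing near $t = \tau/2$: the crossings of the concatenation split into those of $\Psi_2$ and those of $\Psi_1$, with the two half-signature contributions at the shared endpoint combining to match the interior signature produced by the smoothing.

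\textbf{Main obstacle.} The main technical challenge is Step 2: verifying that in a generic two-parameter family, two merging crossings on a single slice always carry opposite signatures of the crossing form. This requires a local normal form for $\Psi_s(t)$ near a fold point of $\mathcal{C}$, together with a careful choice of symplectic frame that simultaneously diagonalizes $Q_t$ and controls its $s$-variation. The boundary case, when a fold occurs at $t=0$ or $t=\tau$, must be handled separately because of the half-signature convention, and a similar concern appears in Step 3 when checking that the smoothing at $t=\tau/2$ does not affect the final index.
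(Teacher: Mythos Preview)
The paper does not actually prove this theorem: its ``proof'' consists solely of citations to \cite{Salamon_Zehnder_92}, \cite{Robbin_Salamon_93}, and \cite{Salamon_99}, together with a remark about conventions. So there is no in-paper argument to compare against, and your proposal is considerably more detailed than what the paper provides.

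That said, your outline is essentially the standard argument one finds in those references, and it is broadly correct. A few remarks. In Step~2, the homotopy invariance being asserted is for homotopies \emph{rel endpoints} (both paths start at $\Id$ and share the same terminal matrix); you implicitly use this when you say the boundary half-signature terms are stable, and it would be worth stating the hypothesis explicitly, since without fixed endpoints the index is certainly not homotopy-invariant. Your fold-cancellation argument is morally right, but the cleaner route in the cited literature is to realize $\mu_{RS}$ as an intersection number of the graph $\mathrm{Gr}(\Psi(t))$ with the diagonal Lagrangian in $(\R^{2n}\times\R^{2n},(-\omega)\oplus\omega)$; homotopy invariance then becomes a topological statement about intersection numbers, avoiding the local normal-form analysis you flag as the main obstacle. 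In Step~3, your homotopy from the pointwise product to the concatenation is the correct idea and uses exactly the hypothesis that $\Psi_2$ is a loop based at $\Id$; the concatenation property of $\mu_{RS}$ is then immediate from the definition, with the caveat (which you note) that one must check the half-contributions at the junction combine correctly. None of these are genuine gaps, just places where the write-up could be tightened.
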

\begin{proof}
	For proofs and additional details, see \cite{Salamon_Zehnder_92}, \cite{Robbin_Salamon_93}, or \cite{Salamon_99}.
	Note that the Robbin-Salamon index of a loop may differ from the Maslov index by a factor of two, depending on conventions.
\end{proof}

Let $(Y^{2n+1},\ker\alpha)$ be a contact manifold, $R$ be the Reeb vector field and $\g:[0,\tau]\to Y$ be a periodic Reeb orbit.
Assume that $\g$ is contractible, so there exists a capping disk $D_\g:D^2\to Y$ such that $D_\g|_{\pp D^2} = \g$.
Choosing a trivialization of the contact structure $\mathcal{T}:\g^*\xi\to[0,\tau]\times \R^{2n}$ along $\g$, which can be extended to the capping disk as a trivialization $\tilde{\mathcal{T}}:D_\g^*\xi\to D_\g\times\R^{2n}$.
We obtain a path of symplectic matrices
$$\Psi_\g(t) = \mathcal{T}(t) dFl^R_t|_\xi \mathcal{T}(0)^{-1}\in Sp(2n).$$
The \textbf{Conley-Zehnder index} of $\g$ is defined as
$$\mu_{CZ}(\g) = \mu_{RS}(\Psi_\g).$$
By the homotopy invariance property in \Cref{Theorem - RS index axiom}, this definition does not depend on the choice of trivialization.
\begin{remark}\rm
	\begin{enumerate}
		\item The Conley-Zehnder index, originally defined in \cite{Conley_Zehnder_83}, is defined for paths such that $\Psi_\g(\tau)$ does not have 1 as an eigenvalue, which means the orbit is non-degnerate.
		However, the Robbin-Salamon index can be defined for degenerate orbits.
		In this sense, we define the index of a degenerate orbit $\g$ as $\mu_{RS}(\Psi_\g)$ and call it the \textbf{Robbin-Salamon index} of $\g$.
		\item In general, the Conley-Zehnder index depends on the choice of the capping disk and the difference can be described in terms of the first Chern class $c_1$ of the symplectic manifold.
		In our case, the target manifold is $T^*S^3$ of which the first Chern class is zero, so there is no dependency on the capping disk.
	\end{enumerate}
\end{remark}

Let $H$ be a Hamiltonian on a symplectic manifold $W^{2n}$, and $\g:[0,\tau]\to W$ be a periodic Hamiltonian orbit on a regular level set $H^{-1}(c)$.
Assume that $Y=H^{-1}(c)$ is of a contact type, so that there exists a Liouville vector field $X$ and $(Y,\ker i_X\o|_Y)$ is a contact manifold and the Hamiltonian vector field $X_H$ on $Y$ is parallel to the Reeb vector field $R$ of $Y$.
Once we choose a symplectic trivialization along the orbit, we can view the linearized flow as a path of symplectic matrices and define the index.
There are two possible definitions of Conley-Zehnder index of a Hamiltonian orbit $\g$.
The first one is defined by the linearized Hamiltonian flow
$$dFl^{X_H}:[0,\tau]\to Sp(2n),$$
and the second one is defined by the linearized Hamiltonian flow restricted to the contact structure of the level set $H^{-1}(c)$,
$$
dFl^{X_H}|_{\xi}:[0,\tau]\to Sp(2n-2).
$$
In this paper, we take the second one, the transverse Conley-Zehnder index, as a definition of the Conley-Zehnder index of a Hamiltonian orbit.

\begin{remark}\rm
	To distinguish these two Conley-Zehnder indices, the second one is usually called a \textbf{transverse Conley-Zehnder index}.
\end{remark}

Since the Hamiltonian orbit is a reparametrization of the Reeb orbit, we consider the effect of a reparametrization on the Conley-Zehnder index.
Let $\tilde{\g}(s(t))=\g(t)$, where $s(0)=0$, $s(\tau)=\sigma$.
Denote the initial and final points as $\g(0)=\tilde{g}(0)=q_0$ and $\g(\tau)=\tilde{\g}(\sigma)=q_1$.
The linearized flows satisfy
$$
\begin{aligned}
	dFl^{X_H}_\sigma&:T_{q_0} W \to T_{q_1} W,\\
	dFl^R_\tau&:T_{q_0}Y\to T_{q_1}Y.
\end{aligned}
$$
\begin{lmm}\label{Lemma - Reparametrization and linearized flow}
	Let $X$ be a vector field on a manifold $W$, and $s:W\to \R$.
	Define $\psi(q) = Fl^X_{s(q)}(q)$.
	Then,
	$$
	d\psi(q) \xi = dFl^X_{s(q)}(q)\xi + (ds(q)\xi)X.
	$$
\end{lmm}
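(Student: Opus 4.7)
The statement is a clean application of the chain rule to the map
$$\psi(q) = F(s(q), q), \qquad F(t,q) := Fl^X_t(q),$$
so my plan is to recognize $\psi$ as a composition and differentiate.

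First, I would factor $\psi$ as $\psi = F \circ g$, where $F : \R \times W \to W$ is the flow map above, and $g : W \to \R \times W$ is $g(q) = (s(q), q)$. The differential of $g$ at $q$ acting on $\xi \in T_q W$ is $(ds(q)\xi, \xi) \in \R \oplus T_q W$. The chain rule then gives
$$d\psi(q)\xi = \partial_t F(s(q), q)\,(ds(q)\xi) + \partial_q F(s(q), q)\,\xi.$$

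Next, I would identify each of the two partial derivatives. By the defining property of a flow, $\partial_t F(t,q) = X(Fl^X_t(q))$, so $\partial_t F(s(q), q) = X(\psi(q))$. For the second term, by definition $\partial_q F(t, q) = dFl^X_t(q)$ viewed as a linear map $T_q W \to T_{Fl^X_t(q)} W$, so $\partial_q F(s(q), q)\,\xi = dFl^X_{s(q)}(q)\,\xi$. Substituting yields precisely
$$d\psi(q)\xi = dFl^X_{s(q)}(q)\,\xi + (ds(q)\xi)\, X,$$
with $X$ understood to be evaluated at $\psi(q)$, which is the convention used in the statement.

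There is no real obstacle here; the only subtlety worth flagging is to make sure both terms land in the correct tangent space $T_{\psi(q)} W$. Both do: $dFl^X_{s(q)}(q)$ maps $T_q W$ to $T_{Fl^X_{s(q)}(q)} W = T_{\psi(q)} W$, and $X$ is evaluated at $\psi(q)$, with the scalar $ds(q)\xi \in \R$ rescaling it. So the identity is well posed and immediate from the chain rule together with the flow equation $\partial_t Fl^X_t = X \circ Fl^X_t$.
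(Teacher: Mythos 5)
Your proof is correct and is the standard chain-rule argument; the paper itself gives no proof here, deferring instead to Chapter 9 of Frauenfelder--van Koert, and your computation is precisely the argument one finds there. The one subtlety you flag (that both terms land in $T_{\psi(q)}W$, with $X$ evaluated at $\psi(q)$) is handled correctly.
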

\begin{proof}
	See Section 9.1 of \cite{Frauenfelder_van_Koert_18}.
\end{proof}
Let $N$ be a normal vector to $Y$ in $W$.
Then we have
$$T_q W = \langle N_q \rangle \oplus T_q Y = \langle N_q \rangle \oplus \langle R_q \rangle \oplus \xi_q.$$
Consider the map given by quotient
$$
dFl^{X_H}_\sigma:T_{q_0} Y \to T_{q_1} Y.
$$
From \Cref{Lemma - Reparametrization and linearized flow}, the difference between $dFl^{X_H}_\sigma$ and $dFl^R_\tau$ is parallel to $R$.
This implies that, after quotienting by $\langle R\rangle$, the two maps
$$
\begin{aligned}
	dFl^{X_H}_\sigma&:\xi_{q_0}\to\xi_{q_1},\\
	dFl^R_\tau&:\xi_{q_0}\to\xi_{q_1},
\end{aligned}
$$
are identical.
We can summarize this result as follows.
\begin{prop}\label{Proposition - CZ index can be computed by Hamiltonian}
	Let $H:(M,\o)\to \R$ be a Hamiltonian, and let $Y=H^{-1}(c)$ be a regular level set of contact type.
	Let $\g$ be a Reeb orbit, and let $\tilde{\g}(s(t))=\g(t)$ be its corresponding Hamiltonian orbit. Then,
	$$
	dFl^R_t|_\xi = dFl^{X_H}_{s(t)}|_\xi.
	$$
	In particular, the Conley-Zehnder index can be computed using the linearized Hamiltonian flow.
\end{prop}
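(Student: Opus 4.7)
The plan is to exploit that on $Y$ the Reeb and Hamiltonian vector fields are positively proportional, so the two flows trace the same integral curves with different parametrizations, and then to invoke \Cref{Lemma - Reparametrization and linearized flow} to show that the discrepancy between the linearized flows lies in the Reeb direction and therefore vanishes upon projection to $\xi$.

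First I would record the proportionality. Since $Y=H^{-1}(c)$ is of contact type with Liouville vector field $X$ and contact form $\alpha=i_X\o|_Y$, the positivity condition $\alpha(X_H)>0$ means that $X_H|_Y=\lambda R$ for the smooth positive function $\lambda=\alpha(X_H)$, and the reparametrization then satisfies $s'(t)=1/\lambda(\g(t))$. To apply \Cref{Lemma - Reparametrization and linearized flow}, I would extend $s$ off the orbit: for $q$ in a small neighborhood $U\subset Y$ of $q_0=\g(0)$, the implicit-function theorem (using $\lambda>0$) produces a smooth function $S(t,q)$ with $S(t,q_0)=s(t)$ such that
$$
Fl^R_t(q)=Fl^{X_H}_{S(t,q)}(q).
$$

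Fix $t\in[0,\tau]$, set $\sigma=s(t)$ and $q_1=\g(t)$. Applying the lemma to the map $\psi(q):=Fl^{X_H}_{S(t,q)}(q)$, which agrees with $Fl^R_t(q)$ on $U$, yields for every $v\in T_{q_0}Y$
$$
dFl^R_t(q_0)\,v \;=\; dFl^{X_H}_{\sigma}(q_0)\,v \;+\; \bigl(d_qS(t,q_0)\,v\bigr)\,X_H(q_1).
$$
The correction term lies in the line $\langle X_H(q_1)\rangle=\langle R(q_1)\rangle$. Both $dFl^R_t(q_0)$ and $dFl^{X_H}_{\sigma}(q_0)$ preserve $TY$ (the Hamiltonian flow because $H$ is constant on $Y$, the Reeb flow tautologically) and both send $\langle R(q_0)\rangle$ into $\langle R(q_1)\rangle$, so they descend to well-defined linear maps on the quotient $TY/\langle R\rangle$. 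Identifying this quotient with $\xi$ via the contact splitting, the displayed identity shows the induced maps agree on $\xi_{q_0}$, which is exactly the first claim.

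The second claim follows immediately. The Conley-Zehnder index of $\g$ is defined as $\mu_{RS}(\Psi_\g)$ where $\Psi_\g$ is built from $dFl^R_t|_\xi$, and by the first part this is the same family of symplectic matrices as $t\mapsto dFl^{X_H}_{s(t)}|_\xi$. Since $s$ is a smooth monotone reparametrization of $[0,\tau]$ onto $[0,\sigma]$, this path is homotopic (rel endpoints) to the linearly parametrized path $u\mapsto dFl^{X_H}_u|_\xi$, so by homotopy invariance from \Cref{Theorem - RS index axiom} their Robbin-Salamon indices coincide. The main obstacle is purely bookkeeping: the proposition involves the fixed function $s(t)$ attached to the single orbit $\g$, whereas \Cref{Lemma - Reparametrization and linearized flow} needs a smooth function on a neighborhood; once $S(t,q)$ is introduced via the implicit definition above, the rest is a direct application of the lemma together with the contact splitting $TY=\langle R\rangle\oplus\xi$.
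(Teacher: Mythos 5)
Your proof is correct and follows essentially the same route as the paper: both arguments rest on the proportionality $X_H|_Y=\lambda R$, apply \Cref{Lemma - Reparametrization and linearized flow} to see that the two linearized flows differ only by a term in the Reeb direction, and conclude by passing to the quotient $TY/\langle R\rangle\cong\xi$. The extra care you take in extending $s$ to a function $S(t,q)$ of the base point, and the homotopy-invariance remark for the index, are details the paper leaves implicit but do not change the argument.
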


\begin{remark}\rm
	Let $H$ be a Hamiltonian on a symplectic manifold $W^{2n}$, and $\g:[0,\tau]\to W$ be a periodic Hamiltonian orbit.
	Then there are two possible definitions of Conley-Zehnder index.
	The first one is defined by the linearized Hamiltonian flow
		$$dFl^{X_H}:[0,\tau]\to Sp(2n),$$
	and the second one is defined by the linearized Hamiltonian flow restricted to the contact structure of the level set $H^{-1}(c)$,
		$$
		dFl^{X_H}|_{\xi}:[0,\tau]\to Sp(2n-2).
		$$
	To distinguish these, the second one is usually called a \textbf{transverse Conley-Zehnder index}.
	In this paper, we take the second one, the transverse Conley-Zehnder index, as a definition of the Conley-Zehnder index of a Hamiltonian orbit.
\end{remark}

\subsection{Indices of the Planar Circular Orbits}\label{Subsection - Planar Index}

The computation of the indices of the retrograde and direct orbits in the planar problem is described in \cite{Albers_Fish_Frauenfelder_vanKoert_13}.
Basically the strategy of computation is the same, except that we're using cylindrical coordinates.
The parametrization and the period of the planar circular orbits in terms of cylindrical coordinates are given in \Cref{Lemma - Periods of Retrograde and Direct},
	$$
	\g_\pm(t) = \begin{pmatrix}
		\o_0^2\\
		(1/\o_0^3 +1)t\\
		0\\
		0\\
		\o_0\\
		0
	\end{pmatrix},\quad
	\o_0 = \pm \frac{1}{\sqrt{-2E}}.
	$$
From \Cref{Proposition - CZ index can be computed by Hamiltonian}, the Conley-Zehnder index can be computed using the linearized Hamiltonian flow.
Let $\mathbf{L}$ be the linearization matrix obtained by differentiating $X_H$,
$$
\mathbf{L}
=
\begin{pmatrix}
	0&0&0&1&0&0\\
	-2p_\theta/r^3&0&0&0&1/r^2&0\\
	0&0&0&0&0&1\\
	-3p_\theta^2/r^4 + 3r^2/R^5-1/R^3&0&3rz/R^5&0&2p_\theta/r^2&0\\
	0&0&0&0&0&0\\
	3rz/R^5&0&3z^2/R^5-1/R^3&0&0&0
\end{pmatrix},
\quad
R = \sqrt{r^2+z^2}.$$
After substituting the orbit $\g_\pm$, we find
$$
\mathbf{L}_\pm =\begin{pmatrix}
	0   &0  &0  &1  &0  &0\\
	- 2/\o_0^5   &0   &0 &0  &1/\o_0^4   &0\\
	0   &0  &0  &0  &0  &1\\
	-1/\o_0^6   &0  &0  &0  & 2/\o_0^3&  0\\
	0   &0  &0  &0  &0  &0\\
	0   &0  &-1/\o_0^6  &0  &0  &0
\end{pmatrix}.
$$
We need to determine a symplectic frame of $\ker(dH)\cap \ker(\lambda)$ along the orbit.
We have
$$
	dH =\left(-\frac{p_\theta^2}{r^3} + \frac{r}{(r^2+z^2)^{3/2}}\right)dr +\frac{z}{(r^2+z^2)^{3/2}}dz +p_r dp_r + \left(1+\frac{p_\theta}{r^2}\right)dp_\theta + p_z dp_z,
$$
and the contact form is given by
$$\lambda = -qdp = p_\theta d\theta-rdp_r - zdp_z.$$
These simplify to
$$
dH =\left(1+ \frac{1}{\o_0^3}\right)dp_\theta,\quad \lambda = \o_0 d\theta - \o_0^2 dp_r,
$$
along the orbits $\g_\pm$.

\begin{note}\rm
	Here, we use $-qdp$ instead of the standard $pdq$, as in the context of Moser regularization, the roles of $p$ and $q$ are interchanged.
	Specifically, let $X=q\pp_q$ so that $i_X \o = -qdp$.
	Note that $X(H) = 1/|q|+L_3$, and $X(H)$ must be positive.
	This can be seen as a special case of a similar phenomenon in the restricted three-body problem.
	See Theorem 5.2.1 in \cite{Cho_Jung_Kim_20}.
\end{note}

With this, we can find a symplectic frame
$$
(X_1, X_2, X_3, X_4)= \left( \pp_\theta + \frac{1}{\o_0}\pp_{p_r}, \o_0 \pp_r, \pp_{p_z},\pp_z\right),
$$
where $\o(X_1,X_2)=\o(X_3,X_4)=1$ and $\o(X_i,X_j)=0$.

\begin{lmm}[\cite{Albers_Fish_Frauenfelder_vanKoert_13}, Appendix B]\label{Lem - Framing in S2}
	Let $T^*S^2\subset T^*\R^3$ with coordinates $(x,y)$ such that $|x|^2=1$ and $ x \cdot y=0$.
	Let $K:T^*S^2\to\R$ be a fiberwise star-shaped Hamiltonian such that $y\pp_y K>0$, and let the contact form be given by $\ld = ydx$.
	Then the following vector fields provide a global trivialization of $\ker \ld \cap \ker dK$.
	$$
	\begin{aligned}
		X_1 &= \left( y\times x - \frac{ (y\times x)\cdot \pp_y K}{y\cdot \pp_y K}\right)\cdot \pp_y,\\
		X_2 &=-\frac{(y\times x)\cdot \pp_x K}{y\cdot  \pp_y K}y\cdot \pp_y + (y\times x)\cdot \pp_x.
	\end{aligned}
	$$
\end{lmm}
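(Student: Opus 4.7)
The plan is to verify four independent conditions on the proposed vector fields $X_1,X_2$: tangency to the submanifold $T^*S^2\subset T^*\R^3$, the vanishing of $\lambda$ on each, the vanishing of $dK$ on each, and pointwise linear independence. The positivity hypothesis $y\cdot\partial_y K>0$ forces $y\ne 0$ on the relevant region and makes both denominators nonvanishing, so smoothness and the global nature of the frame follow automatically once these four points are checked.

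\textbf{Tangency.} Viewing $X_1,X_2$ as vector fields on $T^*\R^3$, I would differentiate the defining functions $|x|^2-1$ and $x\cdot y$ of $T^*S^2$ along each. Since $X_1$ has only $\partial_y$-components, $X_1(|x|^2)=0$ is automatic, and $X_1(x\cdot y)=x\cdot(y\times x)-\tfrac{(y\times x)\cdot\partial_y K}{y\cdot\partial_y K}\,(x\cdot y)$ vanishes using $x\perp(y\times x)$ and $x\cdot y=0$. For $X_2$, the identity $x\cdot(y\times x)=0$ gives $X_2(|x|^2)=0$, and $y\cdot(y\times x)=0$ together with $x\cdot y=0$ gives $X_2(x\cdot y)=0$.

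\textbf{Membership in the two kernels.} Since $\lambda=y\,dx$ only sees the $\partial_x$-component, $\lambda(X_1)=0$ is immediate and $\lambda(X_2)=y\cdot(y\times x)=0$. The defining formulas are engineered precisely so that the correction term in the $\partial_y$-direction cancels the naive $dK$-contribution: $dK(X_1)=(y\times x)\cdot\partial_y K-\tfrac{(y\times x)\cdot\partial_y K}{y\cdot\partial_y K}\,(y\cdot\partial_y K)=0$, and an analogous cancellation yields $dK(X_2)=0$.

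\textbf{Linear independence.} This is the only step requiring a moment's thought, and it is where $y\ne 0$ is essential. The vector $X_1$ lies entirely in the $\partial_y$-direction, while the $\partial_x$-part of $X_2$ equals $y\times x$. On the region $y\ne 0$ the vectors $x$ and $y$ are nonzero and orthogonal, so $y\times x\ne 0$; hence $X_2$ has a nontrivial horizontal part, and any linear dependence between $X_1$ and $X_2$ reduces to showing $X_1\ne 0$. But the $\partial_y$-coefficient of $X_1$ has the form $y\times x-\alpha y$ for a scalar $\alpha$, and its vanishing would force $y\times x$ to be parallel to $y$, contradicting $(y\times x)\perp y$. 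I do not expect a real obstacle: the whole argument is a short string of vector-algebra identities resting on $x\perp(y\times x)$ and $y\perp(y\times x)$, and the structure of the correction terms makes every desired vanishing transparent.
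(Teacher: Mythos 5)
Your verification is correct. Note that the paper itself offers no proof of this lemma --- it simply defers to Appendix~B of \cite{Albers_Fish_Frauenfelder_vanKoert_13} --- so your direct computation is, if anything, more complete than what appears here. All four checks (tangency to $T^*S^2$, $\lambda(X_i)=0$, $dK(X_i)=0$, and pointwise independence) go through exactly as you describe, resting only on the identities $x\cdot(y\times x)=y\cdot(y\times x)=0$ together with the constraints $|x|=1$ and $x\cdot y=0$; the hypothesis $y\cdot\partial_y K>0$ rules out $y=0$ and keeps both denominators nonvanishing. Two small points. First, you have silently (and correctly) read the $\partial_y$-coefficient of $X_1$ as $y\times x-\frac{(y\times x)\cdot\partial_y K}{y\cdot\partial_y K}\,y$; as printed, the statement subtracts a scalar from a vector, so the trailing factor of $y$ is a typo you had to supply, and your independence argument (that $y\times x-\alpha y=0$ would force the nonzero vector $y\times x$ to be parallel to $y$, contradicting $(y\times x)\perp y$) depends on that reading being the intended one. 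Second, to pass from ``two independent sections'' to ``global trivialization'' you should record that $\ker\lambda\cap\ker dK$ has rank exactly $2$ on the region where $y\cdot\partial_y K>0$: this follows because $\lambda(y\cdot\partial_y)=0$ while $dK(y\cdot\partial_y)>0$, so $\lambda$ and $dK$ are pointwise linearly independent one-forms on $T(T^*S^2)$ (and $\lambda\neq0$ there, since evaluating it on the tangent vector with $\partial_x$-part equal to $y$ yields $|y|^2>0$). With that one-line dimension count added, your argument is complete and self-contained.
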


\begin{lmm}\label{Lem - Extending planar frame}
	The framing $X_1,X_2,X_3,X_4$ given above can be extended to a capping disk.
\end{lmm}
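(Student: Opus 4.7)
The plan is to reduce the extension problem to the planar case by placing the capping disk inside an invariant planar submanifold on which the frame splits as a symplectic direct sum of a tangential part (handled by the planar result) and a trivial normal part.

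First I would identify the planar invariant submanifold. The subset $M_P = \{z = 0,\,p_z = 0\} \subset T^*(\R^3\setminus\{0\})$ is preserved by the Hamiltonian flow of $H$, and under Moser regularization it corresponds to the symplectic submanifold $\{x_3 = 0,\,y_3 = 0\} \subset T^*S^3$, canonically identified with $T^*S^2$. Since $T^*S^2$ deformation retracts onto $S^2$, the loop $\gamma_\pm$ bounds a smoothly embedded disk $D \subset T^*S^2 \subset T^*S^3$. I would cap in this $4$-dimensional submanifold rather than in the $3$-dimensional planar level set $Y_P \cong \RP^3$, in which $\gamma_\pm$ generates $\pi_1$; leaving the full energy hypersurface $Y$ is harmless because $c_1(T^*S^3)=0$, so the Conley--Zehnder index depends only on the homotopy class of the extension as a symplectic frame of $u^* T(T^*S^3)$.

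Next I would extend the frame piece by piece. The pair $(X_1, X_2)$ is tangent to $M_P$ and, upon transporting to $T^*S^2$ via Moser regularization, is the restriction to $\gamma_\pm$ of the global frame of $\ker\lambda \cap \ker dK$ provided by \Cref{Lem - Framing in S2}; together with the planar Reeb and Liouville vector fields it extends over $D$ exactly as in Appendix B of \cite{Albers_Fish_Frauenfelder_vanKoert_13}. The pair $(X_3, X_4) = (\partial_{p_z}, \partial_z)$ consists of constant-coefficient, symplectically paired vector fields on the ambient phase space, whose pushforward yields a global symplectic trivialization of the rank-$2$ normal bundle of $T^*S^2 \subset T^*S^3$; this trivialization is globally defined along $T^*S^2$ and so extends trivially over $D$. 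Assembling both pieces and augmenting by the Reeb and Liouville directions of the spatial problem produces the desired symplectic trivialization of $u^* T(T^*S^3)$ over the entire disk.

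The main subtlety is the compatibility check: one must verify that the cylindrical-coordinate expressions for $(X_1, X_2)$ along $\gamma_\pm$ agree, up to symplectic homotopy, with the $(x,y)$-coordinate frame of \Cref{Lem - Framing in S2} after applying Moser regularization and the switch map. This reduces to a direct computation using the stereographic projection formulas of \Cref{Lemma - Stereographic Projection} evaluated along $\gamma_\pm$, and is the only place in the argument that requires explicit calculation.
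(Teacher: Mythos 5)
Your proposal takes essentially the same route as the paper: identify $(X_1,X_2)$ with the global frame of \Cref{Lem - Framing in S2} on the planar locus via Moser regularization (citing Appendix B of \cite{Albers_Fish_Frauenfelder_vanKoert_13} for the matching computation) and extend it over a planar capping disk, while $(X_3,X_4)$ extend trivially because they are constant in the normal directions. You are in fact somewhat more careful than the paper's three-line argument, in particular about placing the disk in $T^*S^2$ rather than in the planar unit cotangent bundle $\RP^3$ where $\gamma_\pm$ need not bound, and about invoking $c_1(T^*S^3)=0$; these are worthwhile clarifications but the underlying argument is identical.
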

\begin{proof}
	From \cite{Albers_Fish_Frauenfelder_vanKoert_13} Appendix B, we can see that $X_1,X_2$ correspond to the framing in the planar orbit given in \Cref{Lem - Framing in S2}.
	Thus, $X_1,X_2$ can be extended to a planar capping disk.
	Since $X_3,X_4$ do not involve planar coordinates, they can also be extended to the planar capping disk if the capping disk is push away from the collision locus.
	This can be done by, for example, using a cutoff function to push the disk into $q_3$- and $p_3$-directions slightly.
\end{proof}

For a complete computation, we also take a normal frame
$$
(N_1, N_2)=\left( \frac{1}{\o_0}\pp_{\theta},\o_0\pp_{p_\theta}+\o_0^2\pp_r\right),
$$
such that $\o(N_1,N_2)=1$ and $\o(X_i,N_j)=0$.
Note that $N_1$ represents the Reeb direction of $\lambda$.
We have
$$
\begin{aligned}
	\mathbf{L}X_1 &= (1/\o_0)\pp_r = (1/\o_0^2)X_2,\\
	\mathbf{L}X_2 &= -(2/\o_0^4) \pp_\theta - (1/\o_0^5)\pp_{p_r}
	=-(1/\o_0^4)X_1 - (1/\o_0^3)N_1,\\
	\mathbf{L}X_3 &= \pp_z = X_4,\\
	\mathbf{L}X_4 &= -(1/\o_0^6)\pp_{p_z}=-(1/\o_0^6)X_3.
\end{aligned}
$$
The linearized flow matrix under the frame $(X_1,X_2,X_3,X_4)$ is
$$
\mathbf{L}=\begin{pmatrix}
	0&-1/\o_0^4&0&0\\
	1/\o_0^2&0&0&0\\
	0&0&0&-1/\o_0^6\\
	0&0&1&0
\end{pmatrix}.
$$
By taking the matrix exponential, we obtain the path of symplectic matrices
$$
\Psi_H(t)=\begin{pmatrix}
	\cos(t/\o_0^3)&-(1/\o_0)\sin(t/\o_0^3)&0&0\\
	\o_0\sin(t/\o_0^3)&\cos(t/\o_0^3)&0&0\\
	0&0&\cos(t/\o_0^3)&-(1/\o_0^3)\sin(t/\o_0^3)\\
	0&0&\o_0^3\sin(t/\o_0^3)&\cos(t/\o_0^3)
\end{pmatrix}.
$$
The crossings occur at $2\pi\o_0^3\Z$.
The crossing form is given by $\Omega \dot{\Psi}(t)$ where $\Omega$ is defined in \Cref{Subsection - CZ Index},
$$
\Omega = \begin{pmatrix}
	0&1&0&0\\
	-1&0&0&0\\
	0&0&0&1\\
	0&0&-1&0
\end{pmatrix}.
$$
We have
$$
\Omega\dot{\Psi}_H(t)=\Omega\mathbf{L} = \begin{pmatrix}
	1/\o_0^2&0&0&0\\
	0&1/\o_0^4&0&0\\
	0&0&1&0\\
	0&0&0&1/\o_0^6
\end{pmatrix},
$$
so the signature of the crossing form is always 4.
Using $\o_0=\pm1/\sqrt{-2E}$, we derive the following result.
\begin{thm}\label{Theorem - Index of planar circular orbits}
	Let $\g_\pm$ denote the retrograde and direct orbits of Kepler energy $E$, where $E\neq E_{k,l}$ for any $k,l$.
	Then, $\g_{\pm}$ and their multiple covers are non-degenerate.
	The Conley-Zehnder index of $N$-th iterate of $\g_\pm$ is
	$$
	\begin{aligned}
		\mu_{CZ}(\g_\pm^N)
		&= 2 + 4\max\left\{n\in \N:\frac{2\pi n}{(-2E)^{3/2}}<N\frac{2\pi}{(-2E)^{3/2}\pm 1}\right\}\\
		&=2 + 4\max\left\{n\in\N:n<N\frac{(-2E)^{3/2}}{(-2E)^{3/2}\pm1}
		\right\}\\
		&=2+4\left\lfloor N\frac{(-2E)^{3/2}}{(-2E)^{3/2}\pm1}\right\rfloor.
	\end{aligned}
	$$
\end{thm}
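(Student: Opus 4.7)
The plan is to apply the Robbin--Salamon crossing-form formula directly to the explicit symplectic path $\Psi_H(t)$ already assembled above, reducing the theorem to counting crossings of a simple linear function of $t$.

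First I would observe that $\Psi_H(t)$ is block diagonal, with each $2\times 2$ block a rotation of angle $t/\o_0^3$ conjugated by a positive diagonal rescaling, so $\Psi_H(t) = \Id$ if and only if $t/\o_0^3 \in 2\pi\Z$. Since $|\o_0|^3 = (-2E)^{-3/2}$ for both $\g_+$ and $\g_-$, the crossings in $[0, N\tau_\pm]$ occur precisely at $t_k = 2\pi k/(-2E)^{3/2}$ for nonnegative integers $k$. At any such crossing, $\ker(\Psi_H(t_k) - \Id) = \R^4$, so the crossing form restricts to the full quadratic form $\Omega\mathbf{L}$, which is already known to be diagonal with positive entries $1/\o_0^2,\,1/\o_0^4,\,1,\,1/\o_0^6$. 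Hence $\Sign(Q_{t_k}) = 4$ at every crossing.

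Next I would handle non-degeneracy together with the count. The endpoint $t = N\tau_\pm$ is a crossing if and only if
$$\frac{N\tau_\pm}{2\pi|\o_0|^3} \;=\; N\cdot\frac{(-2E)^{3/2}}{(-2E)^{3/2}\pm 1}$$
is a positive integer. Solving this relation for $E$ produces exactly the resonance values $E_{k,l}$ excluded by hypothesis; therefore $\Psi_H(N\tau_\pm)\neq\Id$ on each block, the orbit $\g_\pm^N$ is non-degenerate, and the displayed quantity is non-integer. Consequently the number $n$ of interior crossings equals $\lfloor N(-2E)^{3/2}/((-2E)^{3/2}\pm 1)\rfloor$.

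Finally, plugging these ingredients into $\mu_{RS} = \tfrac{1}{2}\Sign(Q_0) + \sum_{\text{interior}}\Sign(Q_t) + \tfrac{1}{2}\Sign(Q_{N\tau_\pm})$ gives $\mu_{CZ}(\g_\pm^N) = \tfrac{1}{2}(4) + 4n + 0 = 2 + 4n$, matching the claimed formula. All the substantive work --- choosing the capping-disk-extendable frame $(X_1,\ldots,X_4)$, restricting to the transverse direction via \Cref{Proposition - CZ index can be computed by Hamiltonian}, and computing the matrix exponential --- has already been done in the setup, so no genuine obstacle remains. The only delicate point is the equivalence between the resonance condition $E = E_{k,l}$ and the endpoint being a crossing, which is the step I would verify most carefully in writing.
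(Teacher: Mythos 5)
Your proposal is correct and follows essentially the same route as the paper: it uses the block-diagonal path $\Psi_H(t)$ obtained from the frame $(X_1,\dots,X_4)$, notes that every crossing has form $\Omega\mathbf{L}$ of signature $4$, and counts crossings of $t\mapsto t/\o_0^3$ in $[0,N\tau_\pm]$ via the Robbin--Salamon formula. The one place you go slightly beyond the paper's write-up --- verifying explicitly that the endpoint is a crossing exactly when $N(-2E)^{3/2}/((-2E)^{3/2}\pm1)$ is an integer, which forces $E=E_{k,l}$ --- is a correct and worthwhile elaboration of a step the paper leaves implicit.
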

\begin{note}\label{Note - Good Orbit}\rm
	This is exactly twice the index of the retrograde and direct orbits in the planar problem, which was computed in \cite{Albers_Fish_Frauenfelder_vanKoert_13}.
\end{note}
We write
$$\mu_\pm=\mu_\pm(E)=\frac{(-2E)^{3/2}}{(-2E)^{3/2}\pm1}.$$
The graph of $\mu_\pm$ is illustrated in \Cref{Figure : mu graph}.

\begin{figure}[ht]
	\centering
	\includegraphics[width=7cm]{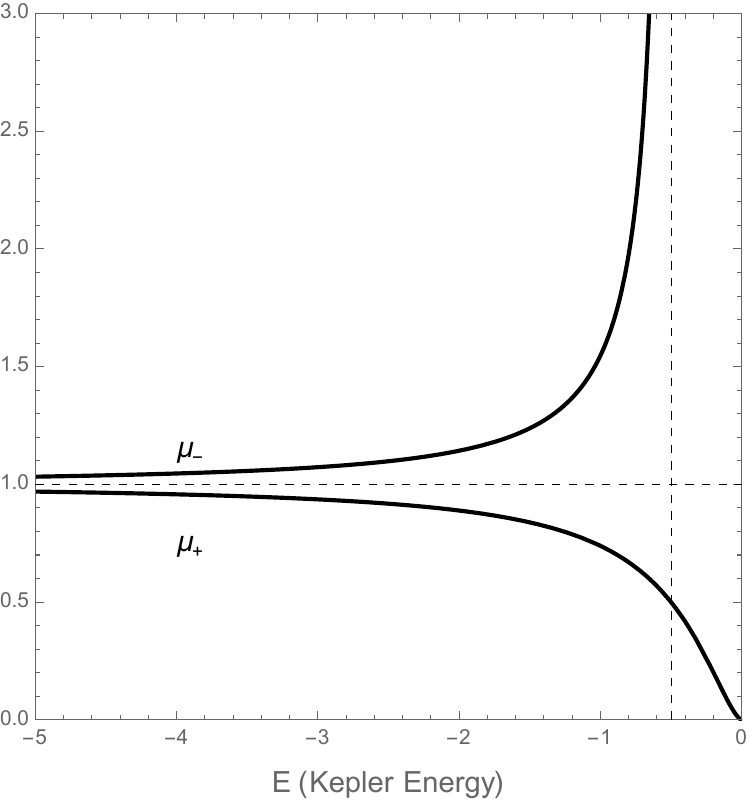}
	\caption{Graphs of $\mu_\pm$.}
	\label{Figure : mu graph}
\end{figure}

Here's some observation about the indices of $\g_{\pm}$.
Let the covering number $N$ be given.
\begin{enumerate}
	\item We can take a sufficiently small $E$ such that $N-1<N\mu_+<N<N\mu_-<N+1$.
	Hence,
	$$
		\mu_{CZ}(\g_+^N) = 4N-2,\quad
		\mu_{CZ}(\g_-^N) = 4N+2.
	$$
	\item The index of $N$-th cover of the retrograde orbit, $\mu_{CZ}(\g_+^N)$, decreases by 4 each time $\mu_+$ touches $k/N$ for some $k=1,\cdots,N-1$.
	The corresponding energy satisfies
	$$
	\frac{(-2E)^{3/2}}{(-2E)^{3/2}+1}=\frac{k}{N},
	$$
	or equivalently
	$$
	E  = E_{k,N-k}= -\frac{1}{2}\left(\frac{k}{N-k}\right)^{2/3}.
	$$
	\item The index of $N$-th cover of the direct orbit, $\mu_{CZ}(\mu_-^N)$, increases by 4 each time $\mu_-$ touches $1+k/N$ for some $k=1,2,\cdots$.
	The corresponding energy satisfies
	$$
	E = E_{N+k,k} = -\frac{1}{2}\left(\frac{N+k}{k}\right)^{2/3}.
	$$
\end{enumerate}

We summarize the result as follows.
\begin{cor}\label{Corollary - Index of Planar Orbits}
	For fixed $N$, the Conley-Zehnder index of $N$-th cover of retrograde orbit with Kepler energy $E$ is
	$$
	\mu_{CZ}(\g_+^N)=\left\{\begin{array}{cc}
		4N-2&   \text{if }E<E_{N-1,1},\\
		4(N-k)-2&\begin{array}{c}\text{if }E_{N-k,k}<E<E_{N-k-1,k+1},\\
			\text{for }k=1,2,\cdots,N-2\end{array}\\
		2 &\text{if }E>E_{1,N-1}.
	\end{array}
	\right.
	$$
	In particular, the simple retrograde orbit has always index 2.
	
	Similarly, the Conley-Zehnder index of $N$-th cover of the direct orbit is
	$$
	\mu_{CZ}(\g_-^N)=\left\{\begin{array}{cc}
		4N+2&   \text{if }E<E_{N+1,1},\\
		4(N+k)+2&   \begin{array}{c}\text{if }E_{N+k,k}<E<E_{N+k+1,k+1},\\
			\text{for }k=1,2,\cdots.\end{array}
	\end{array}
	\right.
	$$
	In particular, the indices of direct orbits diverge as $E\to -1/2$.
\end{cor}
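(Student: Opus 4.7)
The plan is to unpack the floor function in the formula $\mu_{CZ}(\g_\pm^N) = 2 + 4\lfloor N\mu_\pm(E)\rfloor$ from \Cref{Theorem - Index of planar circular orbits}, and simply identify at which energies $E$ the quantity $N\mu_\pm(E)$ crosses an integer. The three observations already listed after the theorem essentially do this bookkeeping; the corollary just assembles them into a piecewise statement.

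For the retrograde case, first I would analyze $\mu_+(E) = (-2E)^{3/2}/((-2E)^{3/2}+1)$ as a function on $(-\infty,0)$. A direct derivative check (or observing that $\mu_+ = 1 - 1/((-2E)^{3/2}+1)$) shows $\mu_+$ is a strictly monotone function of $E$ taking values in $(0,1)$, with limits $\mu_+\to 1$ as $E\to-\infty$ and $\mu_+\to 0$ as $E\to 0^-$. Therefore $N\mu_+(E)\in(0,N)$, so $\lfloor N\mu_+\rfloor$ takes exactly the integer values $0,1,\dots,N-1$ on corresponding open subintervals of $(-\infty,0)$. Solving $\mu_+(E)=j/N$ gives $(N-j)(-2E)^{3/2}=j$, i.e.
$$
E=-\tfrac{1}{2}\bigl(j/(N-j)\bigr)^{2/3}=E_{j,N-j}.
$$
At $E=E_{j,N-j}$ the floor drops from $j$ to $j-1$. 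Reindexing by $k=N-j$ and plugging into $\mu_{CZ}=2+4\lfloor N\mu_+\rfloor$ yields the piecewise formula in the corollary; the endpoint cases $E<E_{N-1,1}$ and $E>E_{1,N-1}$ correspond respectively to $\lfloor N\mu_+\rfloor=N-1$ and $\lfloor N\mu_+\rfloor=0$. The statement for the simple orbit ($N=1$) is then immediate since $N\mu_+<1$ forces $\lfloor N\mu_+\rfloor=0$.

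For the direct orbit, I would first note that $\mu_-(E)=(-2E)^{3/2}/((-2E)^{3/2}-1)$ is only defined (and positive) when $(-2E)^{3/2}>1$, i.e.\ $E<-1/2$, which is harmless because this is precisely the range in which $\gamma_-$ exists. Writing $\mu_-=1+1/((-2E)^{3/2}-1)$ makes clear that $\mu_-$ is strictly monotone in $E$ with values in $(1,\infty)$, tending to $1$ as $E\to-\infty$ and to $+\infty$ as $E\to -1/2$. Hence $N\mu_-\in (N,\infty)$, and setting $\mu_-(E)=1+k/N$ gives $k(-2E)^{3/2}=N+k$, i.e.\ $E=-\tfrac{1}{2}((N+k)/k)^{2/3}=E_{N+k,k}$. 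Combining these transition values with $\mu_{CZ}=2+4\lfloor N\mu_-\rfloor$ produces the piecewise formula; the divergence of $\mu_{CZ}(\g_-^N)$ as $E\to -1/2$ follows from $\mu_-\to\infty$ there.

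There is no substantive obstacle: the whole corollary is a mechanical translation of the floor function in \Cref{Theorem - Index of planar circular orbits} into a case distinction. The one place to be careful is the direction of monotonicity and the precise matching $\mu_+(E)=k/N\leftrightarrow E_{k,N-k}$ versus $\mu_-(E)=1+k/N\leftrightarrow E_{N+k,k}$, so that the indexing $E_{k,l}$ lines up with the integer parts of $N\mu_\pm$.
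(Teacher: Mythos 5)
Your proposal is correct and follows essentially the same route as the paper: the paper's ``proof'' of this corollary is exactly the bookkeeping you describe, namely the three observations preceding the statement that unpack $\mu_{CZ}(\g_\pm^N)=2+4\lfloor N\mu_\pm\rfloor$ by locating the energies $E_{k,N-k}$ (resp.\ $E_{N+k,k}$) where $N\mu_+$ (resp.\ $N\mu_-$) crosses an integer. Your additional care about the monotonicity of $\mu_\pm$ and the ranges $N\mu_+\in(0,N)$, $N\mu_-\in(N,\infty)$ is consistent with the paper's \Cref{Figure : mu graph} and correctly reproduces the case distinctions, including the endpoint cases and the divergence as $E\to-1/2$.
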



\subsection{Indices of Vertical Collision Orbits}\label{Subsection - Vertical index}

We first clarify the notations which will be used in this subsection to avoid confusion.
We fix the Jacobi energy $c$.
\begin{itemize}
	\item $r=\sqrt{-2E}$ : The radius of the sphere used in the Moser regularization.
	After scaling, the domain of the Hamiltonians are the cotangent bundle of unit sphere, while $r$ is encoded in the length of cotangent vector $y$; $|y|=1/r$.
	We will always regard the scaling is applied to the regularized Hamiltonians, of which the explicit formula is given in \Cref{Subsection - Moser regularization}.
	\item $E$ : the Kepler energy.
	\item $K_E$ : the Hamiltonian of Kepler problem under the Moser regularization.
	\item $H = E+L_3$ : the Jacobi energy.
	\item $K_H$ : the Hamiltonian of rotating Kepler problem under the Moser regularization.
\end{itemize}

We've seen in \Cref{Subsection - Classification of periodic orbits} that vertical collision orbits are parametrized by
$$
	\g_{c_\pm}(t) = (x(t);y(t)) = (-\cos (rt), 0, 0, \pm\sin(rt); (1/r)\sin(rt),0,0,\pm(1/r)\cos(rt))
$$
in $T^*S^3$, where $r=\sqrt{-2E}=\sqrt{-2c}$.
Note that $\g_{c_\pm}$ can be regarded as both $K_H$-orbit and $K_E$-orbit.

\begin{lmm}\label{Lemma - Vertical Frame}
	Along the vertical collision orbits $\g_{c_\pm}$ in $T^*S^3$, we can take
	$$
	(X_1,X_2,X_3,X_4)=(\pp_{y_1},\pp_{x_1},\pp_{y_2},\pp_{x_2})
	$$
	as a symplectic frame of the contact structure of $K_H^{-1}(1/2)$ and $K_E^{-1}(1/2)$.
\end{lmm}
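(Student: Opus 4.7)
The plan is to verify that each of the four vectors $X_i$ lies in the contact structure $\xi = \ker\lambda \cap \ker dK$ along $\g_{c_\pm}$, and then to check the symplectic pairings. The whole argument is driven by the single structural fact that the four coordinates $x_1, x_2, y_1, y_2$ vanish identically along $\g_{c_\pm}$, so each computation reduces to observing that specific terms drop out.

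First I would check tangency to $T^*S^3 \subset T^*\R^4$ together with the $\ker\lambda$ condition. Linearizing the defining equations $|x|^2 = 1$ and $x\cdot y = 0$ gives the constraints $x\cdot\dot x = 0$ and $y\cdot\dot x + x\cdot\dot y = 0$; each $X_i$ perturbs only one of $x_1, x_2, y_1, y_2$, and these coordinates are exactly the coefficients that vanish in the linearized constraints on the orbit, so tangency is automatic. Similarly $\lambda = y\,dx$ annihilates $\pp_{y_1}, \pp_{y_2}$ trivially and annihilates $\pp_{x_1}, \pp_{x_2}$ because the matching coefficients $y_1, y_2$ vanish on $\g_{c_\pm}$.

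Next I would handle the two Hamiltonian kernel conditions. For $K_E = (r^2/2)|y|^2$ this is immediate: $dK_E = r^2\, y\cdot dy$, which along the orbit involves only $dy_0$ and $dy_3$ and so vanishes on each $X_i$. For $K_H$ I would write it as $(1/2)|y|^2 f^2$ with $f = r + r^{-2}(1-x_0)(x_1 y_2 - x_2 y_1)$ and expand
$$dK_H = f^2\,(y\cdot dy) + |y|^2 f\,df.$$
The key observation is that $d(x_1 y_2 - x_2 y_1) = y_2\,dx_1 + x_1\,dy_2 - y_1\,dx_2 - x_2\,dy_1$ has every coefficient among the vanishing quantities $x_1, x_2, y_1, y_2$, so $df = 0$ along $\g_{c_\pm}$. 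Together with $f|_{\g_{c_\pm}} = r$, this reduces $dK_H$ on the orbit to the same expression as $dK_E$, and the same annihilation gives $X_i \in \ker dK_H$.

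Finally, the symplectic pairings follow directly from $\o = \sum_i dy_i \wedge dx_i$, which yields $\o(X_1,X_2) = \o(X_3,X_4) = 1$ with all cross pairings vanishing, so $(X_1, X_2, X_3, X_4)$ is a symplectic frame of the four-dimensional distribution $\xi$. The main (and really only) obstacle is the $K_H$ computation in step three: one might worry that the rotating correction to $K_E$ spoils the kernel condition, but because it enters $K_H$ through the product $(1-x_0)(x_1 y_2 - x_2 y_1)$ whose last factor is bilinear in the vanishing coordinates, its differential dies on the orbit and the two Hamiltonians behave identically with respect to this frame along $\g_{c_\pm}$.
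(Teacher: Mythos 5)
Your proof is correct and follows essentially the same route as the paper: check tangency to $T^*S^3$, membership in $\ker\lambda\cap\ker dK_H\cap\ker dK_E$ using the vanishing of $x_1,x_2,y_1,y_2$ along $\g_{c_\pm}$, and then the symplectic pairings. The only difference is that you spell out the computation of $dK_H$ along the orbit (via $df=0$), which the paper simply asserts.
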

\begin{proof}
	We can take $y\cdot \pp_y$ as a Liouville vector field, so $\lambda=i_X dy\w dx=ydx$ as a Liouville form whose restriction to the energy hypersurface is a contact form.
	It is clear that $X_i$ are tangent to $T^*S^3$ along the vertical collision orbit.
	Additionally,
	$$
	\begin{aligned}
		dK_H &= r^2 y_0 dy_0 + r^2 y_3 dy_3,\\
		\lambda &= y_0 dx_0 + y_3 dx_3,
	\end{aligned}
	$$
	along the orbit, so $X_i\in \ker \ld \cap \ker dK_H$.
	Also $\o(X_1,X_2)=\o(X_3,X_4)=1$ and $\o(X_i,X_j)=0$ otherwise, verifying that $(X_1,X_2,X_3,X_4)$ forms a symplectic frame.
	Since $dK_E=dK_H$ along $\g_{c_\pm}$, the frame is still valid if we regard $\g_{c_\pm}$ as a $K_E$-orbit.
\end{proof}
\begin{lmm}\label{Lemma - Vertical Frame 2}
	The frame given in \Cref{Lemma - Vertical Frame} can be extended to a symplectic frame on a capping disk of $\g_{c_\pm}$ in $K_E^{-1}(1/2)$.
\end{lmm}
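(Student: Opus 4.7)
The plan is to parallel the proof of \Cref{Lem - Extending planar frame} by splitting the symplectic frame $(X_1,X_2,X_3,X_4)=(\partial_{y_1},\partial_{x_1},\partial_{y_2},\partial_{x_2})$ into two pairs and extending each pair using a totally geodesic 2-sphere of $S^3$. The key observation is that all four vector fields are ``horizontal'' in the coordinates $(x_1,y_1,x_2,y_2)$, transverse to the $(x_0,x_3,y_0,y_3)$-block in which the orbit lives.

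First, I would choose the totally geodesic $S^2=\{x_2=0\}\cap S^3\subset S^3$ containing the great circle traced by $\gamma_{c_\pm}$, so the orbit sits inside $T^*S^2\subset T^*S^3$. A capping disk $D$ exists in $K_E^{-1}(1/2)\cong ST^*S^3\cong S^3\times S^2$ since the latter is simply connected. For the explicit frame extension I would take $D$ to be a disk in $T^*S^2\setminus 0_{S^2}$; although $\pi_1$ of this region equals $\mathbb{Z}/2$ and $\gamma_{c_\pm}$ is a generator, so a strict inclusion fails, the vanishing $c_1(T^*S^3)=0$ allows us to relax the capping to lie in $T^*S^3\setminus 0_{S^3}$ without affecting the Conley--Zehnder computation.

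For $(X_3,X_4)=(\partial_{y_2},\partial_{x_2})$, these are constant vector fields on $\mathbb{R}^8$ and $x_2=y_2=0$ throughout $D\subset T^*S^2$; a direct verification shows that both are tangent to $T^*S^3$ and annihilated by $\lambda=ydx$ and $dK_E$, so they already lie in $\xi$ at every point of $D$ and extend trivially as constant sections. For $(X_1,X_2)=(\partial_{y_1},\partial_{x_1})$, I would apply \Cref{Lem - Framing in S2} to $T^*S^2$ (with $\mathbb{R}^3$-coordinates $(x_0,x_1,x_3)$) and $K=(r^2/2)|y|^2$: the correction term vanishes because $(y\times x)\cdot\partial_y K=0$, giving $X_1'=(y\times x)\cdot\partial_y$ and $X_2'=(y\times x)\cdot\partial_x$. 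A direct calculation along $\gamma_{c_\pm}$ yields $y\times x=(0,-1/r,0)$, so $X_i'|_{\gamma_{c_\pm}}=-\frac{1}{r}X_i$, while globally on $D$ one has $\omega(X_1',X_2')=|y\times x|^2$, nonvanishing off the zero section. Setting $\tilde{X}_1=-rX_1'$ and $\tilde{X}_2=-\frac{1}{r|y\times x|^2}X_2'$ produces a symplectic extension matching $(X_1,X_2)$ on $\partial D$; the orthogonality $\omega(\tilde{X}_i,X_j)=0$ for $i\in\{1,2\}$, $j\in\{3,4\}$ is automatic from the block structure of $\omega$ in $\mathbb{R}^8$ and the fact that $y\times x$ has no $x_2$-component.

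The main obstacle is the topological one: $\gamma_{c_\pm}$ is not contractible in $ST^*S^2$ nor in $T^*S^2\setminus 0_{S^2}$, so the explicit construction above cannot be confined to the region where the formulas directly apply. The cleanest resolution is to admit the capping disk in $T^*S^3\setminus 0_{S^3}$ via $c_1(T^*S^3)=0$; alternatively, one may let $D$ cross the zero section of $T^*S^2$ and regularize the factor $1/|y\times x|^2$ by perturbing $D$ slightly into the fiber direction of $ST^*S^3\to S^3$, checking that such a perturbation preserves the frame along $\partial D$.
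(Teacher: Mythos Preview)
Your construction is essentially the paper's: restrict to the totally geodesic $S=\{x_2=0\}\cap S^3$, apply \Cref{Lem - Framing in S2} with $K=\tfrac{r^2}{2}|y|^2$ to identify $(X_1,X_2)$ along the orbit with (a scalar multiple of) the global frame $(y\times x)\cdot\partial_y$, $(y\times x)\cdot\partial_x$, and note that $(X_3,X_4)=(\partial_{y_2},\partial_{x_2})$ remain in $\xi$ at every point with $x_2=y_2=0$. The paper then simply writes ``take a capping disk $D$ in $ST^*S$'' and stops.

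Where you go further than the paper is in flagging that a simple closed geodesic represents the nontrivial class in $\pi_1(ST^*S^2)\cong\Z/2$, so no capping disk can lie in $ST^*S^2$; the paper does not address this point. Your proposed fix via $c_1(T^*S^3)=0$ is the right direction but incomplete as you have stated it: the vanishing of $c_1$ makes the Maslov class of the frame along $\gamma_{c_\pm}$ independent of the choice of disk in $ST^*S^3$, but does not by itself force that class to be zero, which is what ``the frame extends'' means. The missing observation is that the frame from \Cref{Lem - Framing in S2}, together with $(\partial_{y_2},\partial_{x_2})$, is defined \emph{globally} on $ST^*S^2$; in particular it extends over a capping disk for the double cover $\gamma_{c_\pm}^2$, which \emph{is} contractible in $ST^*S^2$. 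Hence $2\mu(\gamma_{c_\pm})=\mu(\gamma_{c_\pm}^2)=0$, and the frame extends over any capping disk in $K_E^{-1}(1/2)$.
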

\begin{proof}
	For simplicity, assume $r=1$.
	Consider the cotangent bundle of the subspace
	$$
	S = \{(\cos t \cos \theta,\sin\theta,0,\sin t\cos\theta)\}\subset S^3.
	$$
	Here, $T^*S$ is a subset of $T^*\R^3 = \{x_2=y_2=0\}$.
	Applying \Cref{Lem - Framing in S2} to the $(x_0,x_1,x_3)$-coordinate system and the Hamiltonian $K=|y|^2/2$, we observe that $X_1,X_2$ match the frame given in the lemma along the orbit.
	If we take a capping disk $D$ in $ST^*S=K^{-1}(1/2)\cap T^*S$, as in \Cref{Lem - Extending planar frame}, $X_3,X_4$ can be extended to $D$.
\end{proof}

One can check that the linearized Hamiltonian flow of $K_H$ is time-dependent, and directly integrating it is quite challenging.
Instead, we use the following lemma to compute the Conley-Zehnder index.
\begin{lmm}\label{Lemma - Decomposition of Index}
	Let $\Psi_{K_E}$ and $\Psi_{L_3}$ be paths of symplectic matrices given by the linearized flow of $K_E$ and $L_3$.
	Then,
	$$
	\mu_{CZ}(\g_c) = \mu_{RS}(\Psi_{K_E}) + \mu_{RS}(\Psi_{L_3}).
	$$
\end{lmm}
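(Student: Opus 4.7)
The plan is to exploit the commutativity $\{K_E, L_3\} = 0$ (which descends from the classical identity $\{E, L_3\} = 0$ and is preserved by the Moser symplectomorphism) to decompose the linearized Jacobi flow along $\g_c$ as a pointwise matrix product of the linearized $K_E$-flow and the linearized $L_3$-flow, and then apply the additivity axiom of the Robbin--Salamon index from \Cref{Theorem - RS index axiom}. Since $K_H$ is built from rotation-invariant ingredients ($|q|$, $E$, and $L_3$), one also has $\{K_H, L_3\} = 0$, so the $L_3$-flow preserves $K_H^{-1}(1/2)$ and its contact structure $\xi$.

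The first step is the flow decomposition. A direct expansion gives $K_H - K_E = |q|L_3 \cdot (|q|(E-c)+1) + \tfrac{1}{2}(|q|L_3)^2$; along $\g_c$ one has $L_3 = 0$ and $E = c$, so $X_{K_H}$ and $X_{K_E}$ agree on $\g_c$ and the two contact structures coincide there. Because $X_{K_E}$ and $X_{L_3}$ commute, the $K_H$-flow factors as
\[ Fl^{X_{K_H}}_s = Fl^{X_{K_E}}_s \circ Fl^{X_{L_3}}_{u(s)}, \qquad u(s) = \int_0^s |q(\g_c(s'))|\, ds', \]
where $u$ records the Moser time reparametrization; in particular $u(2\pi/r) = 2\pi/r^3$ recovers the unregularized Kepler period. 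Along $\g_c$ the $L_3$-flow fixes each point (the orbit lies on the $q_3$-axis, which is pointwise fixed by rotation about it), so linearizing at any base point of $\g_c$ gives the pointwise matrix identity
\[ \Psi_{K_H}(s) = \Psi_{K_E}(s) \cdot \Psi_{L_3}(u(s)) \]
as paths in $\mathrm{Sp}(\xi)$.

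Next, I would verify that $\Psi_{K_E}$ is a loop. As a $K_E$-orbit, $\g_c$ is a closed geodesic on the rescaled $T^*S^3$, and by Kepler's third law the entire level set $K_E^{-1}(1/2)$ is foliated by closed orbits of the common period $2\pi/r$. Consequently $Fl^{X_{K_E}}_{2\pi/r}$ restricts to the identity on this level set, forcing $\Psi_{K_E}(2\pi/r) = \Id$ on the $4$-dimensional contact structure. Hence $\Psi_{K_E}$ is a loop on $[0, 2\pi/r]$.

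Combining the two, the additivity of $\mu_{RS}$ applied to the factorization above, together with homotopy invariance to absorb the reparametrization $u$, yields
\[ \mu_{CZ}(\g_c) = \mu_{RS}(\Psi_{K_H}) = \mu_{RS}(\Psi_{K_E}) + \mu_{RS}(\Psi_{L_3}), \]
where the first equality is \Cref{Proposition - CZ index can be computed by Hamiltonian}. The main obstacle is the bookkeeping of the Moser time reparametrization: one has to verify that the $\Psi_{L_3}$ appearing in the lemma is the $L_3$-path over its physical time interval $[0, 2\pi/r^3]$, which is invisible at the level of $\mu_{RS}$ thanks to reparametrization invariance.
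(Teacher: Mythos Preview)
Your overall strategy matches the paper's---factor the linearized flow and invoke the additivity axiom---but you work with the regularized Hamiltonians $K_H, K_E$ directly, whereas the paper uses the unregularized $H = E + L_3$, where the exact sum makes $dFl^{X_H}_t = dFl^{X_{L_3}}_t \circ dFl^{X_E}_t$ immediate, and then reaches the collision orbit by a limiting argument (extending $\Psi_H$ and $\Psi_E$ across the collision locus via the regularized flow, and finally identifying $\mu_{RS}(\Psi_E)$ with $\mu_{RS}(\Psi_{K_E})$ by reparametrization). Your route avoids that limit, which is appealing, but the step you substitute for it is not justified as written.

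The claimed identity $Fl^{X_{K_H}}_s = Fl^{X_{K_E}}_s \circ Fl^{X_{L_3}}_{u(s)}$ is false as an equality of local diffeomorphisms: by your own expansion $K_H - K_E = L_3 \cdot h$ with $h$ depending on $|q|$ and $E$, and the Hamiltonian flow of $L_3 h$ is not a time-reparametrized $L_3$-flow. Commutativity of $X_{K_E}$ and $X_{L_3}$ alone does not give this; on $\g_c$ itself the identity is vacuous (both sides trace out $\g_c$), so it cannot simply be ``linearized'' to yield the matrix factorization. What actually makes the linearized factorization true is the stronger fact that $\g_c$ consists of \emph{critical points} of $L_3$, i.e.\ $dL_3|_{\g_c}=0$ (not merely $L_3|_{\g_c}=0$). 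This kills the cross terms $dL_3\otimes dh + dh\otimes dL_3$ in the Hessian and gives $\mathrm{Hess}(K_H-K_E)|_{\g_c}=|q|\,\mathrm{Hess}(L_3)|_{\g_c}$, hence $DX_{K_H}=DX_{K_E}+|q|\,DX_{L_3}$ along $\g_c$. Combined with $[X_{K_E},X_{L_3}]=0$ and $X_{L_3}|_{\g_c}=0$ (which yields $\Psi_{K_E}(s)\,DX_{L_3}|_{\g_c(0)}=DX_{L_3}|_{\g_c(s)}\,\Psi_{K_E}(s)$), one then checks directly that $\Psi_{K_E}(s)\,\Psi_{L_3}(u(s))$ solves the linearized $K_H$-equation. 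With this correction your argument goes through and gives a genuine alternative to the paper's limiting procedure.
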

\begin{proof}
	See the end of this subsection.
\end{proof}

The Hamiltonian flow of $K_E$, which was given in \Cref{Subsection - Moser regularization}, and its linearized flow is
$$
\begin{pmatrix}
	\dot{x}_0\\
	\dot{x}_3\\
	\dot{y}_0\\
	\dot{y}_3
\end{pmatrix}
=
\begin{pmatrix}
	r^2 y_0\\
	r^2 y_3\\
	- x_0\\
	-x_3
\end{pmatrix},
\qquad
\mathbf{L} = \begin{pmatrix}
	0&-1&0&0\\
	r^2&0&0&0\\
	0&0&0&-1\\
	0&0&r^2&0
\end{pmatrix}.
$$
After integration, the resulting path of symplectic matrices is
$$
\Psi_{K_E}(t)=\begin{pmatrix}
	\cos(rt)&\sin(rt)/r&0&0\\
	-r\sin(rt)&\cos(rt)&0&0\\
	0&0&\cos(rt)&\sin(rt)/r\\
	0&0&-r\sin(rt)&\cos(rt)
\end{pmatrix}.
$$
Notice that the period of $\Psi_{K_E}$ is equal to the period of the collision orbit.
The crossing occurs at $t=(2\pi/r)\Z$, and the crossing form is
$$
\Omega\dot{\Psi}_{K_E}(\tau) = \begin{pmatrix}
	r^2&0&0&0\\
	0&1&0&0\\
	0&0&r^2&0\\
	0&0&0&1
\end{pmatrix}.
$$
It follows that the signature of the crossing form is always 4.

Next, we compute the index of the orbit of the angular momentum $L_3$ on $T^*\R^3$.
The Hamiltonian equation and the linearized matrix $\mathbf{M}$ with respect to the symplectic basis $\pp_{p_1},\pp_{q_1},\pp_{p_2},\pp_{q_2}$ is given by
$$
\begin{pmatrix}
	\dot{q}_1\\
	\dot{q}_2\\
	\dot{q}_3\\
	\dot{p}_1\\
	\dot{p}_2\\
	\dot{p}_3
\end{pmatrix}
=
\begin{pmatrix}
	-q_2\\
	q_1\\
	0\\
	-p_2\\
	p_1\\
	0
\end{pmatrix},
\qquad
\mathbf{M} = \begin{pmatrix}
	0&0&1&0\\
	0&0&0&1\\
	-1&0&0&0\\
	0&-1&0&0
\end{pmatrix}.
$$
The linearized flow is given by
$$
\Psi_L(t) = \begin{pmatrix}
	\cos t&0&\sin t&0\\
	0&\cos t&0&\sin t\\
	-\sin t&0&\cos t&0\\
	0&-\sin t&0&\textit{}\cos t
\end{pmatrix}.
$$
The crossing occurs at $2\pi\Z$, and the crossing form is
$$
\Omega\dot{\Psi}_L(\tau) = \Omega\mathbf{M} = \begin{pmatrix}
	0&0&0&1\\
	0&0&-1&0\\
	0&-1&0&0\\
	1&0&0&0
\end{pmatrix},
$$
so the signature of the crossing form is 0.
Combining these results, we have the following.
\begin{thm}\label{Theorem - Index of Collision}
	Let $\g_{c_\pm}$ be the vertical collision orbits of Kepler energy $E$, where $E\neq E_{k,l}$ for any $k,l$.
	Then $\g_{c_\pm}$ and their multiple covers are non-degenerate.
	The Conley-Zehnder index of the $N$-th iterate of $\g_{c_\pm}$ is
	$$
	\begin{aligned}
		\mu_{CZ}(\g^N_{c_\pm})&=\mu_{RS}(\Psi_{K_E}) + \mu_{RS}(\Psi_L)\\
		&=\left(2+4(N-1\right) + 2)+ 0 = 4N.
	\end{aligned}
	$$
	In particular, the index of the multiple cover of vertical collision orbits never changes.
\end{thm}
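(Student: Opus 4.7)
The plan is to apply \Cref{Lemma - Decomposition of Index}, which reduces the Conley-Zehnder index of $\gamma_{c_\pm}^N$ to the sum of the Robbin-Salamon indices of the linearized flows of $K_E$ and $L_3$ separately. Both paths $\Psi_{K_E}$ and $\Psi_L$ have already been integrated explicitly in the discussion preceding the theorem, using the symplectic trivialization of \Cref{Lemma - Vertical Frame}, which extends to a capping disk by \Cref{Lemma - Vertical Frame 2}. The rest of the argument is therefore a tabulation of crossings and signatures.

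The first step is to verify non-degeneracy of $\gamma_{c_\pm}^N$. Since the orbit has period $T_0 = 2\pi/r$ with $r = \sqrt{-2E}$, the $N$-th iterate has period $T = 2\pi N/r$. At $t = T$ one has $\Psi_{K_E}(T) = \mathrm{Id}$, so the transverse return map reduces to $\Psi_L(T)$, which is a simultaneous rotation by angle $T$ in two planes. This has eigenvalue $1$ precisely when $N/\sqrt{-2E} \in \mathbb{Z}$, i.e., when $E = -\tfrac{1}{2}(N/k)^2 = E_{N^3, k^3}$ for some $k \in \mathbb{N}$; such values are excluded by the hypothesis $E \neq E_{k,l}$.

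The next step is to read off $\mu_{RS}(\Psi_{K_E})$ on $[0, T]$. The crossings lie at $t = 2\pi j/r$ for $j = 0, 1, \ldots, N$, and the crossing form $\Omega \dot{\Psi}_{K_E} = \mathrm{diag}(r^2, 1, r^2, 1)$ has signature $4$ at each one. The Robbin-Salamon formula then yields $\tfrac{1}{2}(4) + (N-1)(4) + \tfrac{1}{2}(4) = 4N$. For $\mu_{RS}(\Psi_L)$ on the same interval, the crossings fall at multiples of $2\pi$, and at each one the crossing form $\Omega \mathbf{M}$ has two $+1$ and two $-1$ eigenvalues, giving signature $0$; hence every contribution vanishes and $\mu_{RS}(\Psi_L) = 0$. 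Summing via \Cref{Lemma - Decomposition of Index} gives $\mu_{CZ}(\gamma_{c_\pm}^N) = 4N + 0 = 4N$, as claimed.

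The main obstacle is really \Cref{Lemma - Decomposition of Index} itself, whose proof is deferred to the end of the subsection. Its justification requires comparing the transverse dynamics of $K_H$ with the composition of the $K_E$- and $L_3$-dynamics: while the unregularized identity $Fl^{X_H} = Fl^{X_E} \circ Fl^{X_{L_3}}$ is immediate from $\{E, L_3\} = 0$, one must verify that the Robbin-Salamon index respects this decomposition on the regularized level set $K_H^{-1}(1/2)$ and that the trivializations and crossing forms from the two systems line up. Once that lemma is in hand, the theorem follows by the inspection above.
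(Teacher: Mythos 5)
Your index computation follows the paper's route exactly: invoke \Cref{Lemma - Decomposition of Index}, read off $\mu_{RS}(\Psi_{K_E})=\tfrac12(4)+(N-1)(4)+\tfrac12(4)=4N$ from the crossings at $t=2\pi j/r$, and note that every crossing of $\Psi_L$ has signature $0$ so $\mu_{RS}(\Psi_L)=0$. That part is correct and is precisely the paper's argument, including the reliance on \Cref{Lemma - Vertical Frame} and \Cref{Lemma - Vertical Frame 2} for the trivialization.

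The flaw is in your non-degeneracy check: you evaluate $\Psi_L$ at the regularized time $T=2\pi N/r$, but the decomposition $Fl^{X_H}_t=Fl^{X_{L_3}}_t\circ Fl^{X_E}_t$ holds in the \emph{physical} time of the unregularized Hamiltonians, and it is only $\Psi_E$ that gets traded for $\Psi_{K_E}$ by reparametrization invariance of the transverse index. The $L_3$-rotation must therefore be evaluated at the Kepler period of the $N$-th iterate, $N\tau=2\pi N/(-2E)^{3/2}$, not at $2\pi N/r$. The transverse return map is a rotation by $N\tau$ in two planes, which has eigenvalue $1$ iff $N/(-2E)^{3/2}\in\Z$, i.e.\ iff $E=E_{N,l}$ for some $l$ --- exactly the resonance condition under which $\gamma_{c_\pm}^N$ joins a $\Sigma_{k,l}$-family. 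Your condition $N/\sqrt{-2E}\in\Z$, i.e.\ $E=E_{N^3,k^3}$, is a strictly smaller set, so your argument would certify non-degeneracy at energies $E=E_{N,l}$ where the orbit is in fact degenerate. The theorem's conclusion survives only because both sets happen to be contained in $\{E_{k,l}\}$, which the hypothesis excludes; the same time-scale confusion does not affect the index because the $\Psi_L$-crossings contribute signature $0$ no matter how many there are. Correct the period in the degeneracy criterion and the proof is complete (modulo \Cref{Lemma - Decomposition of Index}, which you rightly identify as the real content and which the paper proves at the end of the subsection by extending $\Psi_H$ to the collision locus via a limit and applying the additivity axiom of \Cref{Theorem - RS index axiom}).
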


\subsubsection*{Proof of \Cref{Lemma - Decomposition of Index}}\label{Subsubsection - Justifying}

Let $\g:[0,\tau]\to T^*S^3$ be a $K_H$-orbit.
Since the flow of $K_H$ and $H$ are parallel, with \Cref{Lemma - Reparametrization and linearized flow} we have
$$
\mu_{CZ}(\g) = \mu_{RS}(\Psi_{K_H}) = \mu_{RS}(\Psi_H)
$$
except for the collision orbits, for which $H$ is not defined.

We first extend $\Psi_H$ to the collision locus.
Let $\g:[0,\tau]\to T^*S^3$ be a $K_H$-orbit such that $\g(0)$ lies on the collision locus.
Let $\mathcal{T}:\g^*\xi\to [0,\tau]\times \R^4$ be a trivialization of the contact structure along $\g$.
For $\eps>0$, we can find an $H$-orbit $\tilde{\g}_\eps:[0,\sigma(\eps)]\to T^*\R^3$, which is a reparametrization of $\g|_{[\eps,\tau]}$.
Under the inverse stereographic projection, we can regard $\tilde{\g}_\eps$ as an orbit on $T^* S^3$.
We define
$$\begin{aligned}
	\Phi^\eps_\tau &= \mathcal{T}(t)^{-1}dFl^{X_H}_{\sigma(\eps)}|_{\xi}\mathcal{T}(\eps) \in Sp(4),\\
	\Psi_\eps &= \mathcal{T}(\eps)^{-1}dFl^{X_K}_\eps|_\xi \mathcal{T}(0) \in Sp(4).
\end{aligned}
$$
Then we have $\Psi_\tau = \Phi^\eps_\tau \Psi_\eps$.
It's clear that $\lim_{\eps\to0}\Psi_\eps =\Psi_0=\Id$, so that
$$
\lim_{\eps\to0} \Phi^\eps_\tau = \Psi_\tau.
$$
In a similar way, we can extend $\Psi_E$ to the collision locus.

Now since $H=E+L$ and $\{E,L\}=0$, we have
$$
dFl^{X_H}_t = dFl^{X_{L_3}}_t \circ dFl^{X_E}_t.
$$
If we restrict ourselves to the collision orbit, we get
$$
\mathcal{T}(t)dFl^{X_H}_t|_\xi \mathcal{T}(0)^{-1} = \left( \mathcal{T}(t) dFl^{X_{L_3}}_t|_\xi \mathcal{T}(t)^{-1}\right) \left( \mathcal{T}(t) dFl^{X_E}_t|_\xi \mathcal{T}(0)^{-1}\right).
$$
From \Cref{Theorem - RS index axiom}, we know that
$$
\mu_{RS}(\Psi_H) = \mu_{RS}(\Psi_{L_3}) + \mu_{RS}(\Psi_E).
$$
Since $E$-orbit is a reparamerization of $K_E$-orbit, we have $\mu_{RS}(\Psi_E) = \mu_{RS}(\Psi_{K_E})$, which leads us to the conclusion.
Moreover, since the $L_3$-orbit with initial conditions on the collision orbit is constant, we can compute the index with respect to any frame.

\subsection{Relation with the Symplectic Homology}\label{Subsection - Relation with SH}

The \textbf{symplectic homology} is a homology defined on a Liouville manifold, given by a chain complex generated by periodic Hamiltonian orbits, which are graded by the Conley-Zehnder index, and the differential defined in terms of a Floer cylinders.
Equipped with the Jacobi energy $H:T^*S^3\to \R$, we can consider the symplectic homology of $T^*S^3$.
Since we'll only deal with the computed result, we refer \cite{Audin_Damien_14}, \cite{Abouzaid_15}, \cite{Kwon_van_Koert_16}, and \cite{Gutt_18} for the detailed definition and further discussions.

The symplectic homology of $T^*S^3$ can be computed independently by Viterbo's theorem or other methods.
For the simplicity of the argument, we use the $S^1$-equivariant symplectic homology.
The derivation of the following result can be found in \cite{Kwon_van_Koert_16}, Proposition 5.12.

\begin{prop}\label{Proposition - S1 equivariant homology of TS3}
	The $+$-part of the $S^1$-equivariant symplectic homology of $T^*S^3$ is given by
	$$
	SH^{S^1,+}_* (T^*S^3;\Q) = \left\{\begin{array}{cc}
		\Q&\text{if}\,\,*=2,\\
		\Q^2&\text{if}\,\,*=2k\geq4,\\
		0&\text{otherwise}.
	\end{array}\right.
	$$
\end{prop}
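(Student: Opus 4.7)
The plan is to invoke the $S^1$-equivariant Morse-Bott spectral sequence for $SH^{S^1,+}_*(T^*S^3)$, applied to the round geodesic Hamiltonian $K(x,y)=\tfrac{1}{2}|y|^2$. Since symplectic homology is a Liouville invariant of $T^*S^3$, this simple autonomous choice computes the same group as the rotating Kepler Hamiltonian, while making the Reeb dynamics completely explicit.

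First I would enumerate the Morse-Bott families of periodic Reeb orbits. The round $S^3$ is Zoll, so every geodesic closes after time $2\pi$, and for each $N\geq 1$ the $N$-fold iterated parametrized great circles form a single Morse-Bott family $\Sigma_N$ diffeomorphic to $ST^*S^3\cong S^3\times S^2$ (using the parallelizability of $S^3$). Quotienting by the $S^1$-reparametrization action, the unparametrized $N$-fold geodesics form $\Sigma_N/S^1\cong \widetilde{\mathrm{Gr}}_2(\R^4)\cong S^2\times S^2$, whose rational Poincar\'e polynomial is $1+2t^2+t^4$.

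Next I would compute the effective Morse-Bott degree shift for each $\Sigma_N$. A crossing-form calculation on the linearized geodesic flow, entirely parallel to the one performed in \Cref{Section - CZ index of periodic orbits} for the rotating case, gives $\mu_{RS}(\Sigma_N)=4N-\tfrac{1}{2}$, matching the formula $4k-1/2$ of the main theorem at $k=N$. Under the standard $S^1$-equivariant Morse-Bott convention, each $\Sigma_N$ then contributes a shifted copy of $H_*(S^2\times S^2;\Q)$ whose degree-zero class lies in total degree $4N-2$; concretely, $\Q$ in degrees $4N-2$ and $4N+2$, and $\Q^2$ in degree $4N$.

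Summing these contributions over $N\geq 1$ recovers the claimed answer: degree $2$ is hit by $N=1$ alone (contributing $\Q$), while every even degree $2k$ with $k\geq 2$ receives $\Q^2$, either as the middle of a single $\Sigma_N$-block or as the juxtaposition of the top of $\Sigma_N$ and the bottom of $\Sigma_{N+1}$. The main obstacle is establishing $E^1$-degeneracy of the spectral sequence, which I would settle by a parity argument: every generator sits in even total degree while the differentials shift total degree by $-1$, so they must all vanish identically. The only remaining subtlety is the Morse-Bott grading convention that turns the half-integer $\mu_{RS}(\Sigma_N)$ into the integer starting degree $4N-2$, but this amounts to standard bookkeeping once one splits off the Reeb direction from the parametrization of $\Sigma_N$.
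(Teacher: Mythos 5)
The paper does not actually prove this proposition; it simply cites \cite{Kwon_van_Koert_16}, Proposition 5.12. Your plan --- run the Morse--Bott spectral sequence of \Cref{Theorem - MBSS} on the round geodesic Hamiltonian, where the Zoll property makes every orbit space explicit --- is a legitimate self-contained route, and your identification of $\Sigma_N/S^1$ with $S^2\times S^2$ and the final parity argument for $E^1$-degeneracy are both fine. However, there is a concrete error in the index step. You claim $\mu_{RS}(\Sigma_N)=4N-\tfrac{1}{2}$ by ``matching'' the formula $\mu_{RS}(\Sigma_{k,l})=4k-\tfrac{1}{2}$ from the main theorem, but that formula belongs to a different family: the rotating-Kepler families $\Sigma_{k,l}$ are $4$-dimensional ($S^3\times S^1$, a regular level set of $L_3$ times the orbit direction), whereas your round-metric family $\Sigma_N$ is all of $ST^*S^3$, which is $5$-dimensional. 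For the round metric the transverse linearized flow on the $4$-dimensional contact structure is a block rotation with crossings exactly at $t\in 2\pi\Z$, each crossing form being positive definite of signature $4$, so
$$
\mu_{RS}(\Sigma_N)=\tfrac{1}{2}(4)+4(N-1)+\tfrac{1}{2}(4)=4N,
$$
an integer, not $4N-\tfrac{1}{2}$. Your own bookkeeping betrays the inconsistency: with $sh(\Sigma)=\mu_{RS}(\Sigma)-\tfrac{1}{2}\dim\Sigma/S^1$ and $\dim\Sigma_N/S^1=4$, your value would give the non-integral shift $4N-\tfrac{5}{2}$, yet you then place the degree-zero class of $H_*(S^2\times S^2;\Q)$ in total degree $4N-2$. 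The correct chain is $\mu_{RS}(\Sigma_N)=4N$, hence $sh(\Sigma_N)=4N-2$, after which your summation over $N$ and the degeneration argument go through and yield the stated groups. So the approach is salvageable and genuinely different from the paper's (which offers no computation at all), but as written the justification of the key index is wrong and internally contradictory; it needs to be replaced by the direct crossing computation for the round geodesic flow.
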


We now relate this result to our computation of the Conley-Zehnder indices of the non-degenerate orbits.
Fix $N\in\mathbb{N}$.
We denote
$$
c^\pm_{k,l} = E_{k,l}\pm \frac{1}{\sqrt{-2E_{k,l}}}
$$
for the Jacobi energy of the retrograde orbit and direct orbit with Kepler energy $E_{k,l}$.
There exists an energy level $c\ll -3/2$ sufficiently small such that
\begin{enumerate}
	\item $\mu_{CZ}(\g_+^k)=4k-2$ for $k\leq N$,
	\item $\mu_{CZ}(\g_-^k)=4k+2$ for $k\leq N$,
	\item Higher iterates of the retrograde and direct orbits have index $\geq 4N-2$.
\end{enumerate}
This condition is achieved by taking $c$ smaller than $c^+_{N,1}$, so that the $k\leq N$-multiple covers of retrograde and direct orbits does not have index change.
Each cover of $\g_\pm$ and $\g_{c_\pm}$ is a Morse-Bott manifold $S^1$ since it is isolated, and generates $SH^{S^1,+}(T^*S^3;\Q)$.
Analyzing this setup, we find
\begin{enumerate}
	\item One generator at degree $2$ ($\g_+$),
	\item Two generators at degree $4k+2$ for $k=1,\ldots,N-1$ ($\g_+^{k+1}$ and $\g_-^k$),
	\item Two generators at degree $4k$ for every $k\in\mathbb{N}$ ($\g_{c_\pm}^k$).
\end{enumerate}
Since the degree gap between generators are $2$, differentials can be neglected.
This result coincides with the $+$-part of $S^1$-equivariant symplectic homology of $T^*S^3$ described in \Cref{Proposition - S1 equivariant homology of TS3} up to degree $4N-2$.
In other words, we can compute $SH^{S^1,+}(T^*S^3;\Q)$ up to desired degree in terms of non-degenerate orbits of the rotating Kepler problem.

\subsection{Morse-Bott Property of $\Sigma_{k,l}$-families}\label{Subsection - Morse-Bott property}

Following notions are taken from \cite{Kwon_van_Koert_16}.
Let $H$ be a Hamiltonian defined on a symplectic manifold $W$.
We say a family of $\tau$-periodic Hamiltonian orbits is a \textbf{Morse-Bott family} if following two conditions are satisfied.
	\begin{enumerate}
		\item $C=\{x\in W\,:\,Fl^{X_H}_\tau(x)=x\}$ forms a compact submanifold without boundary.
		\item Let $\Sigma$ be a connected component of $C$ and $\nu(\Sigma)$ be a normal bundle.
		The linear map
			$$
			d_x Fl^{X_H}_\tau|_{\nu(\Sigma)}-\Id|_{\nu(\Sigma)}
			$$
		is invertible for any $x\in \Sigma$.
	\end{enumerate}
Our target is $\Sigma_{k,l}$-family, which are the degenerate orbits.
To show the Morse-Bott property, we need to compute the linearized flow.
For this, we need the help of action-angle coordinates, which is given by the celebrated Arnold-Liouville theorem.
The proof can be found in \cite{Arnold_89}.

Let $H$ be a Hamiltonian defined on a symplectic manifold $(W^{2n},\o)$.
The system $(W,\o,H)$ is called \textbf{integrable} if there exists integrals of motion $f_1=H$, $f_2$, $\cdots$, $f_n$ such that $\{f_i,f_j\}=0$ and the differentials $(df_1)_p,\cdots,(df_n)_p$ are linearly independent.

\begin{thm}[Arnold-Liouville]
	Let $(W,\o,H)$ be an integrable system with integrals $f_1,\cdots,f_n$ and $c\in\R^n$ be a regular value of $F=(f_1,\cdots,f_n):W\to\R^n$.
	\begin{enumerate}
		\item The regular level set $F^{-1}(c)$ is a Lagrangian submanifold.
		\item If the Hamiltonian flows $Fl^{X_{f_1}}$, $\cdots$, $Fl^{X_{f_n}}$ starting at any $p\in F^{-1}(c)$ are complete, then each connected component of $F^{-1}(c)$ is a homogeneous space for $\R^n$, i.e., $\R^{n-k}\times T^k$.
		\item There exists a coordinate system $(\vp_1,\cdots,\vp_n)$ on each component of $F^{-1}(c)$ such that the Hamiltonian flows $Fl^{X_{f_i}}$ are linear.
		These are called \textbf{angle coordinates}.
		\item There exists symplectic conjugate coordinates $(p_1,\cdots,p_n)$ such that $p_i$'s are integrals, and coordinates $(\vp_1,\cdots,\vp_n,p_1,\cdots,p_n)$ form a Darboux chart.
		These are called \textbf{action coordinates}.
	\end{enumerate}
\end{thm}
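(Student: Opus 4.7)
The plan is to handle the four claims in order, using throughout the fact that Poisson-commutativity $\{f_i,f_j\}=0$ makes the Hamiltonian vector fields $X_{f_1},\ldots,X_{f_n}$ a commuting frame on $F^{-1}(c)$. Part (1) is immediate: since $X_{f_i}(f_j)=\{f_j,f_i\}=0$, each $X_{f_i}$ is tangent to the regular level set, and being linearly independent there (because $c$ is regular and $\o$ is non-degenerate) they span its tangent space; the identity $\o(X_{f_i},X_{f_j})=\pm\{f_i,f_j\}=0$ then shows this $n$-dimensional tangent space is isotropic, hence Lagrangian.

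For (2), the bracket identity $[X_{f_i},X_{f_j}]=X_{\{f_i,f_j\}}=0$ lets the flows assemble, under completeness, into a smooth action
\[
\Theta:\R^n\times F^{-1}(c)\to F^{-1}(c),\qquad (t_1,\ldots,t_n)\cdot p=Fl^{X_{f_1}}_{t_1}\circ\cdots\circ Fl^{X_{f_n}}_{t_n}(p).
\]
On a fixed connected component the orbit of any point is both open (the $X_{f_i}$ span the tangent space) and closed, so it exhausts the component; the stabilizer $\Lambda$ is a closed, discrete subgroup of $\R^n$, hence a lattice of rank $k\le n$, and orbit--stabilizer identifies the component with $\R^n/\Lambda\cong\R^{n-k}\times T^k$. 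Part (3) then follows by choosing a $\Z$-basis of $\Lambda$, completing to an $\R$-basis of $\R^n$, and pulling the action back to standard translation; in the resulting angle coordinates $(\vp_1,\ldots,\vp_n)$ every flow $Fl^{X_{f_i}}$ is linear by construction.

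Part (4) is the main obstacle, as it requires symplectic and not merely smooth information. Working in a neighborhood of the regular value $c$, I would use the implicit function theorem to produce a smooth family of fibers $F^{-1}(c')$ and show that the period lattices $\Lambda_{c'}\subset\R^n$ form a smooth lattice bundle over the base, so that a smoothly varying basis of cycles $\g_1(c'),\ldots,\g_k(c')$ exists on each fiber. I would then define
\[
p_i(c')=\frac{1}{2\pi}\oint_{\g_i(c')}\ld,
\]
where $\ld$ is a local Liouville primitive of $\o$, which exists because the fibers are Lagrangian (Weinstein's Lagrangian neighborhood theorem). The Lagrangian condition also makes the integral depend only on the homology class of $\g_i$, so $p_i$ is a smooth function of the $f_k$'s. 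To close the argument, I would verify that $(\vp,p)$ is a Darboux chart by computing the brackets on the normal model $T^*T^k\times T^*\R^{n-k}$: $X_{p_i}$ generates translation by the $i$-th lattice basis vector, giving $\{p_i,\vp_j\}=\delta_{ij}$; $\{p_i,p_j\}=0$ because the $p_i$'s are functions of the Poisson-commuting $f_k$'s; and $\{\vp_i,\vp_j\}=0$ because each fiber is Lagrangian. The most delicate point is establishing the smoothness of the lattice bundle and the independence of the cycle choice, which is what separates the purely topological statement (2)--(3) from the symplectic conclusion (4).
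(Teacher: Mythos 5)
The paper offers no proof of this theorem: it is quoted as a classical result with a pointer to \cite{Arnold_89}, so there is no in-paper argument to compare against. What you have written is essentially the standard Liouville--Arnold proof from that reference. Parts (1)--(3) of your sketch are correct and complete at the level of detail expected: tangency and isotropy of the span of the $X_{f_i}$ give the Lagrangian condition, commutativity of the flows assembles them into an $\R^n$-action, and the open--closed orbit argument plus the classification of closed subgroups of $\R^n$ yields $\R^{n-k}\times T^k$ together with the linear (angle) coordinates.

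Part (4) is where the substance lies, and your sketch is thinner than the two difficulties you flag. First, the period integrals $p_i=\frac{1}{2\pi}\oint_{\g_i(c')}\ld$ produce only $k$ action variables; in the generality stated (non-compact fibers allowed) there are no cycles for the remaining $n-k$ directions, so you must either give a separate construction there or restrict to compact fibers as Arnold does. Second, the assertion that $X_{p_i}$ generates translation by the $i$-th lattice basis vector --- equivalently $\{p_i,\vp_j\}=\delta_{ij}$ --- is the heart of the theorem and cannot simply be ``verified on the normal model'': it requires showing that $\pp p_i/\pp f_j$ recovers (up to the $2\pi$ normalization) the $j$-th component of the $i$-th period lattice vector, which is the step that converts the purely topological normal form of (2)--(3) into a symplectic one. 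Likewise, $\{\vp_i,\vp_j\}=0$ does not follow from the fibers being Lagrangian alone, since $X_{\vp_i}$ is transverse to the fibers. The standard way to close both gaps at once is the generating function $S(f,q)=\int_{q_0}^{q}\ld$ along the Lagrangian fibers, which is also the mechanism the paper itself uses in its appendix to derive the Delaunay coordinates; your proposal would be complete if you replaced the bracket-by-bracket verification with that argument.
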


\subsubsection*{Delaunay Coordinates}
First, we introduce Delaunay coordinates, which is widely used for the Kepler problem and the three-body problem.
A derivation is given in the appendix, and we refer \cite{Poincare_87} and \cite{Chenciner_89} for the detailed description of this coordinate system.
\textbf{Delaunay coordinates} are action-angle coordinates with following three Poisson-commuting integrals of motion,
	$$
	(p_l,p_g,p_\theta)=\left(\frac{1}{\sqrt{-2E}},|L|,L_3\right),
	$$
and we denote $(l,g,\theta)$ for the conjugate coordinates.
\begin{note}\rm
	A lot of literatures are using $(L,G,\Omega)$ instead of $(p_l,p_g,p_\theta)$.
\end{note}
Under this coordinate system, our Hamiltonians can be written as
	$$
	E=-\frac{1}{2p_l^2},\quad H=-\frac{1}{2p_l^2} + p_\theta.
	$$
Away from the coordinate singularity, the Hamiltonian flow of $E$ and $H$ are linear,
$$
\begin{aligned}
Fl^{X_E}_t(l_0,g_0,\theta_0)&= \left(l_0 + \frac{1}{p_l^3}t,g_0,\theta_0\right),\\
Fl^{X_H}_t(l_0,g_0,\theta_0)&= \left(l_0 + \frac{1}{p_l^3}t,g_0,\theta_0+t\right),
\end{aligned}
$$
while $(p_l,p_g,p_\theta)$ are constant.

To apply the Arnold-Liouville theorem, we need to show the differentials of $p_l$, $p_g$ and $p_\theta$ are linealy independent.
Consider spherical coordinates on $T^*\R^3$, which are
	$$
	\begin{pmatrix}
		q_1\\
		q_2\\
		q_3
	\end{pmatrix}
	=
	\begin{pmatrix}
		r \sin\psi \cos \vp\\
		r \sin \psi \sin \vp\\
		r \cos \psi
	\end{pmatrix},
	$$
	
	$$
	\begin{pmatrix}
		p_r\\
		p_\psi\\
		p_\vp
	\end{pmatrix}
	=
	\begin{pmatrix}
		\sin\psi \cos\vp &	\sin\psi\sin\vp&	\cos\psi\\
		r\cos\psi\cos\vp&	r\cos\psi\sin\vp&	-r\sin\psi\\
		-r\sin\psi\sin\vp&	r\sin\psi\cos\vp&	0
	\end{pmatrix}
	\begin{pmatrix}
		p_1\\
		p_2\\
		p_3
	\end{pmatrix}.
	$$
Here's a relation between spherical coordinates and Delaunay coordinates.
$$
\begin{pmatrix}
	p_l\\
	p_g\\
	p_\theta
\end{pmatrix}
=
\begin{pmatrix}
	\left(\frac{2}{r}-p_r^2-\frac{p_\psi^2}{r^2}-\frac{p_\vp^2}{r^2\sin^2\psi}\right)^{-1/2}\\
	\left(p_\psi^2 + \frac{p_\vp^2}{\sin^2 \psi}\right)^{1/2}\\
	p_\vp
\end{pmatrix}.
$$
We know that the coordinate change from Euclidean coordinates to spherical coordinates is a well-defined symplectomorphism except for the origin, so it's enough to show the linear independency of the derivatives of $(p_l,p_g,p_\theta)$ in terms of spherical coordinates.
From the direct computation, we have the explicit formula
	$$
\left(\frac{\pp (p_l,p_g,p_\theta)}{\pp(r,\psi,\vp,p_r,p_\psi,p_\vp)}\right)^t
=
\begin{pmatrix}
	-\frac{1}{(-2E)^{3/2}}\left(\frac{p_\psi^2}{r^3} - \frac{1}{r^2} + \frac{p_\vp^2}{r^3\sin^2\psi}\right)
	&
	0
	&
	0
	\\
	-\frac{1}{(-2E)^{3/2}}\frac{p_\vp^2\cos\psi}{r^2\sin^3\psi}
	&
	-\frac{1}{|L|}\frac{p_\vp^2\cos\psi}{\sin^3\psi}
	&
	0
	\\0
	&
	0
	&
	0
	\\\frac{p_r}{(-2E)^{3/2}}
	&
	0
	&
	0
	\\\frac{1}{(-2E)^{3/2}}\frac{p_\psi}{r^2}
	&
	\frac{p_\psi}{|L|}
	&
	0
	\\\frac{1}{(-2E)^{3/2}}\frac{p_\vp}{r^2\sin^2\psi}
	&
	\frac{p_\vp}{|L|\sin^2\psi}
	&
	1
\end{pmatrix}.
$$
We note some properties.
	\begin{itemize}
		\item The collision orbits are excluded, since they are not contained in the domain of spherical coordinates.
		\item $p_r=0$ if $r$ is constant, which means that the orbit is circular.
		\item $p_\psi=0$ if $\psi$ is constant, which means that the orbit is planar.
		\item $p_\vp=0$ if $\vp$ is constant, which means that the orbit is vertical.
		In fact, vertical orbits are not contained in $\Sigma_{k,l}$-family generically, so it's not of our interest.
		Hence we can assume that $p_\vp\neq0$.
	\end{itemize}
We break the situation into three cases.
	\begin{enumerate}
		\item If the orbit is not circular and not planar, we have $p_r\neq0$ and $p_\psi\neq0$.
		It follows that 4th, 5th and 6th row are linearly independent.
		\item If the orbit is circular and not planar, we still have $p_\psi\neq0$.
		We claim that (1,1)-component is nonzero.
		Indeed, we have
		$$
		-\frac{1}{r}+\frac{p_\psi^2}{r^2}+\frac{p_\vp^2}{r^2\sin^2\psi}=-\frac{1}{r} + \left(\frac{2}{r} + p_r^2 - 2E\right) = \frac{1}{r} - 2E.
		$$
		Meanwhile, we have $r=1/\sqrt{-2E}$ for circular orbits, so if $E<-1/2$, this is nonzero.	
		Therefore, 1st, 5th and 6th row are linearly independent.
		\item If the orbit is planar, we have $p_\psi=0$ and $\cos \psi=0$.
		In this case, 2nd and 3rd columns are linearly dependent.
	\end{enumerate}

In short, the Delaunay coordinate system is valid for non-planar orbits.
To see the Morse-Bott property, we need to compute the linearized flow.
Let $E=E_{k,m}=-\frac{1}{2}\left(\frac{k}{m}\right)^{2/3}$, so that the orbits of the rotating Kepler problem with Kepler energy $E$ is periodic with period $2\pi k$.
The linearized Hamiltonian flow of $H$ for the $\Sigma_{k,m}$-orbits in Delaunay coordinates can be computed by following matrix
$$
\mathbf{L}=\begin{pmatrix}
	0&0&0&-3/p_l^4&0&0\\
	0&0&0&0&0&0\\
	0&0&0&0&0&0\\
	0&0&0&0&0&0\\
	0&0&0&0&0&0\\
	0&0&0&0&0&0
\end{pmatrix}.
$$
Let's denote the return time by $\tau=2\pi m = 2\pi k p_l^3$.
Let $\Phi$ be a linearized map, i.e. $\Phi(t) = \mathbf{L}\Phi(0)$.
Since $\mathbf{L}$ is nilpotent, $\Phi$ can be easily computed as $\Phi(t) = \exp \mathbf{L}(t)$.
That is,
$$
\Phi (t) = \begin{pmatrix}
	1&0&0&-3t/p_l^4&0&0\\
	0&1&0&0&0&0\\
	0&0&1&0&0&0\\
	0&0&0&1&0&0\\
	0&0&0&0&1&0\\
	0&0&0&0&0&1
\end{pmatrix}.
$$
The linearized return map is $\Psi=\Phi(\tau)$, which is
$$
\Psi = \begin{pmatrix}
	1&0&0& -6\pi k / p_l & 0&0\\
	0&1&0&0&0&0\\
	0&0&1&0&0&0\\
	0&0&0&1&0&0\\
	0&0&0&0&1&0\\
	0&0&0&0&0&1
\end{pmatrix}.
$$
Our manifold $\Sigma_{k,m}$ is characterized by $E$ and $L_3$, which can be written $p_l=p_{l,0}$ and $p_\theta=p_{\theta,0}$ in Delaunay coordinates.
Let
$$
\nu = -p_l^3 \frac{\pp}{\pp p_l} + \frac{\pp}{\pp p_\theta}
$$
be a normal vector of $\Sigma$.
Note that $\langle \nu, \nabla H \rangle =0$.
We have that
$$
\Psi \nu = 6\pi k p_l^2 \frac{\pp}{\pp l} + \nu,
$$
which means that $\Psi|_{\nu \Sigma}\neq\Id$.
This proves that $\Sigma$ is Morse-Bott for non-planar orbits.

\subsubsection*{LRL Coordinates}

To cover the planar orbits, we introduce another combination of the integrals of motions.
Since $\{A_3,L_3\}=0$, we can take $A_3$ instead of $|L|$.
In spherical coordinates, we can write
	$$
	A_3 = \frac{\cos\psi}{r}\left(p_\psi^2 + \frac{p_\vp^2}{\sin^2\psi}\right) + \sin\psi p_r p_\psi - \cos \psi.
	$$
We take $(p_l,p_\eta,p_\theta)$ for new coordinates, where $p_\eta=A_3$.
We call $(l,\eta,\theta,p_l,p_\eta,p_\theta)$ \textbf{Laplace-Runge-Lenz coordinates}.
Then we have
$$
\frac{\pp p_\eta}{\pp(r,\psi,\theta,p_r,p_\psi,p_\vp)}^t
=\begin{pmatrix}
	-\frac{\cos \psi (p_\psi^2 + p_\vp^2 /\sin^2 \psi)}{r^2}\\
	p_r p_\psi \cos\psi + \sin\psi -\frac{2p_\vp^2 \cos\psi}{r\sin^3 \psi} - \frac{\sin\psi (p_\psi^2 + p_\vp^2/\sin^2\psi)}{r}\\
	0\\
	p_\psi \sin\psi\\
	p_r\sin\psi + \frac{2p_\psi \cos\psi}{r}\\
	\frac{2p_\vp \cos\psi}{r \sin^2\psi}
\end{pmatrix}.
$$
Consider the non-circular planar orbits determined by 
$$
p_r\neq0,\quad p_\psi =0,\quad \cos\psi=0,\quad \sin\psi=1.
$$
Then we have
$$
\left(\frac{\pp (p_l,p_\eta,p_\theta)}{\pp(r,\psi,\vp,p_r,p_\psi,p_\vp)}\right)^t
=
\begin{pmatrix}
	-\frac{1}{(-2E)^{3/2}}\left( - \frac{1}{r^2} + \frac{p_\vp^2}{r^3}\right)
	&
	0
	&
	0
	\\
	0
	&
	\sin\psi\left( 1 - \frac{p_\vp}{r}\right)
	&
	0
	\\0
	&
	0
	&
	0
	\\\frac{p_r}{(-2E)^{3/2}}
	&
	0
	&
	0
	\\
	0&
	p_r &
	0
	\\\frac{1}{(-2E)^{3/2}}\frac{p_\vp}{r^2}
	&
	\frac{p_\vp}{|L|}
	&
	1
\end{pmatrix}.
$$
We have that the 4th, 5th and 6th row are linearly independent, and the flow is independent of $p_\eta$.
Therefore, we can apply the same argument as the previous section to show the Morse-Bott property for the non-cicrular planar orbits.
In short, we can establish the following proposition.
	\begin{prop}\label{Proposition - Morse-Bott property}
		Assume that $c\neq E_{k,l}$ and $c\neq E_{k,l}\pm1/\sqrt{-2E_{k,l}}$ for any $k,l\in\N$.
		Then the $S^3$-family $\Sigma_{k,l}$, which consists of the periodic orbits of the rotating Kepler problem with Kepler energy $E_{k,l}$ and Jacobi energy $c$, has the Morse-Bott property.
	\end{prop}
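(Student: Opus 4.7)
The rotating Kepler problem is completely integrable, with three Poisson-commuting integrals; one can take either $(E, |L|, L_3)$ or $(E, A_3, L_3)$, the Poisson-commutativity of the latter following from \Cref{Proposition - LRL properties}. My strategy is to apply the Arnold--Liouville theorem in a neighborhood of each orbit of $\Sigma_{k,l}$ so that $H$ becomes a function of action variables only; then the flow is linear in the angles and the linearized return map over the full period $\tau = 2\pi k p_l^3$ reduces to a single shear block coming from the Hessian of $H$ with respect to the actions. The Morse-Bott property then follows from reading off one nonvanishing entry of this Hessian.

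\textbf{Main computation.} First I introduce Delaunay coordinates $(l, g, \theta, p_l, p_g, p_\theta)$ with $p_l = 1/\sqrt{-2E}$, $p_g = |L|$, $p_\theta = L_3$, in which $H = -1/(2p_l^2) + p_\theta$. Then $\Sigma_{k,l}$ is cut out by $p_l = p_{l,0}$ and $p_\theta = p_{\theta,0}$, and the only nonvanishing second derivative of $H$ in the actions is $\pp^2 H/\pp p_l^2 = -3/p_l^4$; integrating over one return period produces a return map $\Psi$ that differs from the identity only by a shear pairing $\pp_{p_l}$ with $\pp_l$. Choosing the normal vector $\nu = -p_l^3\,\pp_{p_l} + \pp_{p_\theta}$, which lies in $\ker dH$ and is transverse to $\Sigma_{k,l}$ inside $H^{-1}(c)$, a direct computation yields $(\Psi - \Id)\nu = 6\pi k p_l^2\,\pp_l \neq 0$, which is the required nontriviality of the linearized return map on the normal bundle of $\Sigma_{k,l}$.

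\textbf{Main obstacle.} The real difficulty is that the Delaunay chart degenerates on the planar locus: there $p_g = |L|$ becomes functionally dependent on the other actions because the $p_\psi$-row in the Jacobian vanishes when $\cos\psi = 0$ and $p_\psi = 0$. To cover the planar orbits of $\Sigma_{k,l}$ I would replace $p_g$ by $p_\eta = A_3$, obtaining Laplace--Runge--Lenz coordinates; an explicit differential check on the planar stratum shows that $dp_l$, $dp_\eta$, $dp_\theta$ are linearly independent as soon as $p_r \neq 0$, i.e.\ away from the circular orbits. Because $A_3$ also does not enter $H$, the shear-block calculation is identical and produces the Morse-Bott condition on the planar non-circular part. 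The last step is to verify that the two charts together exhaust $\Sigma_{k,l}$: the only orbits they omit are the circular planar orbits (which would require $c = E_{k,l} \pm 1/\sqrt{-2E_{k,l}}$) and the vertical orbits (which would require $L_3 = 0$, i.e.\ $c = E_{k,l}$), both excluded by the hypothesis on $c$. This exhaustion argument, rather than the linear-algebraic computation, is where the genericity assumption is genuinely used, and I expect it to be the main subtlety of the proof.
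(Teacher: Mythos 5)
Your proposal follows essentially the same route as the paper: Delaunay action--angle coordinates reducing the linearized return map to a single shear block, the normal vector $\nu = -p_l^3\,\pp_{p_l}+\pp_{p_\theta}$ with $(\Psi-\Id)\nu = 6\pi k p_l^2\,\pp_l\neq 0$, and the switch to Laplace--Runge--Lenz coordinates $p_\eta=A_3$ to cover the planar non-circular locus, with the genericity hypothesis used exactly as you say to exclude the circular planar and vertical (and collision) orbits from $\Sigma_{k,l}$. The one detail you gloss over, which the paper treats as a separate case, is the circular non-planar orbits: there $p_r=0$ kills the fourth row of the Jacobian, and independence of $dp_l,dp_g,dp_\theta$ is instead obtained from the first row, whose $(1,1)$-entry is shown to be nonzero for $E<-1/2$.
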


\subsection{Indices of $\Sigma_{k,l}$-Families}

We've seen that $\Sigma_{k,l}$-families are generically Morse-Bott in \Cref{Proposition - Morse-Bott property}.
We recall the local Floer homology and Morse-Bott spectral sequence which can be utilized in our case.
The \textbf{local Floer homology}, defined in \cite{Cieliebak_Floer_Hofer_Wysocki_96} and also well-explained in \cite{Kwon_van_Koert_16}, computes the Floer homology of a neighborhood of a Morse-Bott submanifold.
\begin{prop}\label{Proposition - local Floer}
	Let $(W,d\ld)$ be a Liouville domain and $\Sigma$ be a Morse-Bott component of a Morse-Bott type Hamiltonian $H:W\to\R$ and $H(\Sigma)=c$.
	If
		\begin{enumerate}
			\item (Symplectic triviality) there exists a symplectic trivialization for each critical component which can be extended to $W$,
			\item $c_1(W)=0$,
			\item $c$ is regular value and the Liouville vector field is transversal to $H^{-1}(c)$,
			\item the Hessian of $H$ to the direction of Liouville vector field is positive definite,
		\end{enumerate}
	then the local Floer homology of $\Sigma$ with $\Z_2$-coefficient is isomorphic to the singular homology of $\Sigma$ with degree shift, i.e.,
		$$
		HF^{loc}_{*+sh(\Sigma)}(\Sigma,H,J;\Z_2)\simeq H_*(\Sigma;\Z_2),
		$$
	where $sh(\Sigma)=\mu_{RS}(\Sigma)-\frac{1}{2}\dim \Sigma/S^1$.
\end{prop}
\begin{proof}
	See Proposition 8.4. of \cite{Kwon_van_Koert_16}.
	We used $\Z_2$-coefficient to avoid the discussion about the orientation.
\end{proof}

\begin{note}\rm
	When we apply \Cref{Proposition - local Floer} to our situation, the Hamiltonian appearing in that theorem is not the Kepler Hamiltonian itself, but rather a modified Hamiltonian that has either the retrograde or the direct orbit and $\Sigma_{k,l}$-family as a critical component and is adjusted so as to behave well on the cylindrical end.
	Indeed, the positive definiteness assumed in the theorem refers to the positive definiteness of this modified Hamiltonian, not that of the Kepler Hamiltonian (which is in fact not positive definite), and this modified Hamiltonian can always be chosen so that the required condition is satisfied.
\end{note}

Using the filtration by periods, we can construct a spectral sequence whose first page consists of the local Floer homology of the critical components which computes the Floer homology of given Liouville domain $W$.
Precisely, the first page is given by following.
	$$
	E_{pq}^1(HF(W;\Z_2)) = \left\{\begin{array}{cc}
		\bigoplus_{\Sigma\in C(p)}H_{p+q-sh(\Sigma)}(\Sigma;\Z_2)&\text{if }p>0,\\
		H_{q+n}(W,\pp W)&\text{if }p=0,\\
		0&\text{otherwise}.
	\end{array}
		\right.
	$$
We call this \textbf{Morse-Bott spectral sequence}.
See Chapter 8 of \cite{Kwon_van_Koert_16} for the detailed definition.

Using the local Floer homology, we can compute the degree shift and Robbin-Salamon indices of the $\Sigma_{k,l}$-families.
To understand the situation, we consider the following simple example illustrated in \Cref{Figure : MBSS simple}.
We write
	$$
	c_{k,l}^\pm = E_{k,l} \pm \frac{1}{\sqrt{-2E_{k,l}}}
	$$
for the Jacobi energy of the retrograde and direct orbits, at which the bifurcation occurs.
At $c_- = c^-_{8,1}-\eps$, the seventh cover of the direct orbit $\g_-^7$ has index 30, and $\Sigma_{8,1}$-family does not exist.
If the Jacobi energy passes through $c^-_{8,1}$ and become $c_+=c^-_{8,1}+\eps$, the index of $\g_-^7$ becomes 34 and $\Sigma_{8,1}$-family is born.
Both cases should give the same result due to the invariance of the local Floer homology, and from Morse-Bott spectral sequence, one can deduce that the degree shift of $\Sigma_{8,1}$-family must be 30.
The following theorem gives the rigorous explanation of the general case.

\begin{figure}[ht]
	\centering
	\includegraphics[width=7cm]{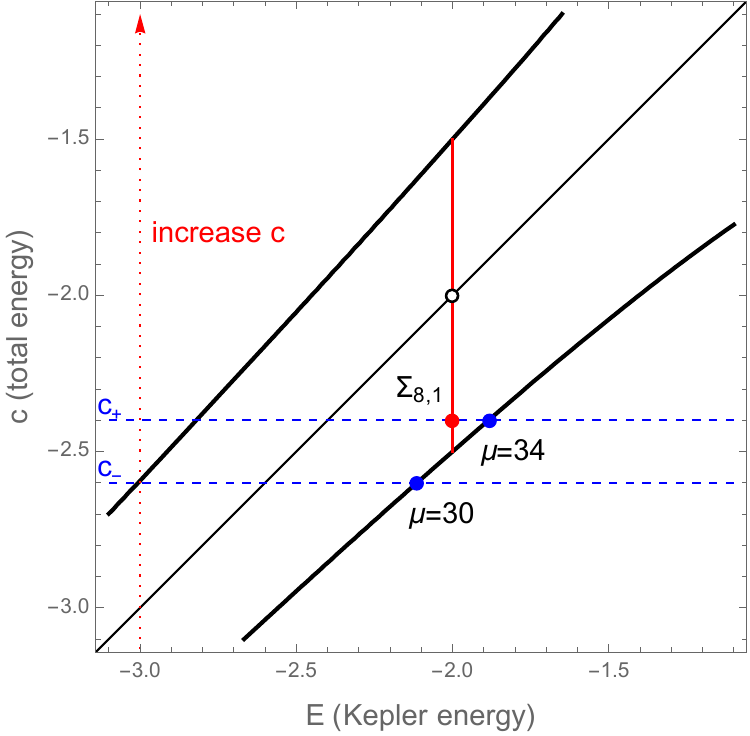}
	\caption{Bifurcation at $c^-_{8,1}$.}
	\label{Figure : MBSS simple}
\end{figure}

\begin{thm}\label{Theorem - Index of Degenerate Orbits}
	The Robbin-Salamon index of $\Sigma_{k,l}$-family is $4k-1/2$.
\end{thm}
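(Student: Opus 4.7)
The plan is to compute $\mu_{RS}(\Sigma_{k,l})$ indirectly, via the Morse-Bott spectral sequence (Theorem 3.17) together with the invariance of $SH^{S^1,+}(T^*S^3)$ under change of the Jacobi energy $c$. As indicated by the preceding bifurcation figure, the idea is to compare contributions to the $E^1$-page just below and just above the birth energy $c^-_{k,l}$, where $\Sigma_{k,l}$ first appears.

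First I would pin down the bifurcation. Using Corollary 3.6 with $N = k-l$, a direct check shows that the index $\mu_{CZ}(\g_-^{k-l})$ jumps from $4k-2$ to $4k+2$ precisely as $c$ crosses $c^-_{k,l}$ (equivalently as $E$ crosses $E_{k,l}$), while for generic $c$ no other non-degenerate orbit has an index change at this value. By Proposition 3.16, the family $\Sigma_{k,l}$, present for $c$ just above $c^-_{k,l}$, is Morse-Bott with $\Sigma_{k,l}/S^1 \cong S^3$. Because the Reeb $S^1$-action is free, $H^{S^1}_*(\Sigma_{k,l}, \Q) \cong H_*(S^3, \Q)$ is $\Q$ in degrees $0$ and $3$, so $\Sigma_{k,l}$ contributes a pair of generators in total degrees $sh(\Sigma_{k,l})$ and $sh(\Sigma_{k,l}) + 3$, where by definition $sh(\Sigma_{k,l}) = \mu_{RS}(\Sigma_{k,l}) - 3/2$.

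Comparing the two sides of the bifurcation, the before-picture features a single generator in degree $4k-2$ from $\g_-^{k-l}$, whereas the after-picture features generators in degrees $4k+2$ (from $\g_-^{k-l}$), $sh(\Sigma_{k,l})$, and $sh(\Sigma_{k,l})+3$. Since all other generators are unaffected, invariance of $SH^{S^1,+}(T^*S^3)$ forces $sh(\Sigma_{k,l}) = 4k-2$, with a spectral sequence differential pairing the two newly-created generators at degrees $4k+1$ and $4k+2$, so that only the restored generator in degree $4k-2$ survives. This gives $\mu_{RS}(\Sigma_{k,l}) = sh(\Sigma_{k,l}) + 3/2 = 4k - 1/2$.

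The main obstacle is making the invariance argument airtight: one must verify that a spectral sequence differential indeed realizes the cancellation of the $4k+1$ and $4k+2$ classes (the period/action filtration places them at comparable levels since $\g_-^{k-l}$ and $\Sigma_{k,l}$ share the period $2\pi l$ at $c = c^-_{k,l}$), and rule out alternative reconciliations that might give the same equivariant homology. A natural consistency check is to repeat the analysis at the death bifurcation $c^+_{k,l}$, where $\mu_{CZ}(\g_+^{k+l})$ jumps from $4k+2$ down to $4k-2$ and $\Sigma_{k,l}$ disappears: the parallel bookkeeping again forces $sh(\Sigma_{k,l}) = 4k-2$, confirming the value of $\mu_{RS}(\Sigma_{k,l})$ independently.
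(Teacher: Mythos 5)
Your proposal is correct and follows essentially the same route as the paper: it compares the $E^1$-pages of the Morse--Bott spectral sequence across the bifurcations at $c^-_{k,l}$ and $c^+_{k,l}$, using the index jump of $\gamma_-^{k-l}$ (resp.\ $\gamma_+^{k+l}$) and invariance of $SH^{S^1,+}(T^*S^3)$ to force $sh(\Sigma_{k,l})=4k-2$ and hence $\mu_{RS}(\Sigma_{k,l})=4k-\tfrac{1}{2}$. The only point of emphasis worth adjusting is that the analysis at the death bifurcation $c^+_{k,l}$ is not merely a consistency check but a logically necessary second case: the families with $L_3>0$ cannot be deformed to the birth bifurcation without crossing the singular level $L_3=0$, so both bifurcations are needed to cover all $\Sigma_{k,l}$-families.
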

\begin{proof}
	As mentioned in the last part of \Cref{Subsection - Moduli}, $\Sigma_{k,l}$-family becomes singular at $L_3=0$.
	So we divide the proof into two cases.
	\subsubsection*{Case 1: The $\Sigma_{k,l}$-family with $L_3 < 0$}
	The first case is $\Sigma_{k,l}$-family with $L_3<0$.
	In this case we consider the direct orbit. As in the previous example, we perturb the Jacobi energy so that it passes through $c_{k,l}^-$.  
	By \Cref{Theorem - Index of planar circular orbits}, we have
	$\mu_{CZ}(\g_-^{k-l})$ is $4k - 2$ at $c_{k,l}^- - \eps$,
and $4k + 2$ at $c_{k,l}^- + \eps$.
	
	We now consider two spectral sequences computing the local Floer homology of a neighborhood of $\g_-^{k-l}$.  
	Since we are on the cylindrical end, multiple covers of an orbit are separated, so there is no need to consider additional coverings.
	At the energy level $c_{k,l}^- - \eps$, the only generator is $\g_-^{k-l}$ with degree shift $4k - 2$, as illustrated in \Cref{Figure - MBSS1}.  
	We omit the $p=0$ column for simplicity; in particular, the degree of
	$
	H_{*}(\g_-^{k-l}) \simeq H_*(S^1)
$	starts at
	$sh(\g_-^{k-l}) - p
	= \mu_{CZ}(\g_-^{k-l}) - 1
	= 4k - 3.
	$
	
	At the energy $c_{k,l}^- + \varepsilon$, the degree shift of $\g_-^{k-l}$ changes accordingly.  
	Here, the $\Sigma_{k,l}$-family also contributes with degree shift denoted by $N$.  
	Since the period of $\g_-^{k-l}$ is longer than that of $\Sigma_{k,l}$, we place $\Sigma_{k,l}$ at $p=1$ and $\g_-^{k-l}$ at $p=2$.
	
	At $p=2$, we have $H_*(S^1)$ starting at $sh(\g_-^{k-l}) - p= 4k + 2 - 2 = 4k$.
	At $p=1$, we have
	$H_*(S^3 \times S^1)$ starting at $sh(\Sigma_{k,l}) - 1 = N - 1$.
	There are six generators in total.  
	Therefore, compared with the configuration at $c_{k,l}^- - \eps$, the spectral sequence must contain a differential of degree $(1,0)$ that cancels two pairs.  
	This yields two possibilities: $N-1=4k-3$ or $4k$.
	
	The second option is excluded, since it leaves generators in total degrees $4k+4$ and $4k+5$, which do not match the configuration for $c_{k,l}^- - \eps$.  
	Thus we conclude $N - 1 = 4k - 3$, and hence
	$$	\mu_{RS}(\Sigma_{k,l})
	=sh(\Sigma_{k,l})+\frac{1}{2} \dim (\Sigma_{k,l}/S^1)
	= (4k - 2) + \frac{3}{2}
	= 4k - \frac{1}{2}.
	$$
	The situation is illustrated in \Cref{Figure - MBSS2}.
	
	\subsubsection*{Case 2: The $\Sigma_{k,l}$-family with $L_3 > 0$}
	
	In this case we consider the retrograde orbit.  
	Let the Jacobi energy pass through $c_{k,l}^+$.  
	Again by \Cref{Theorem - Index of planar circular orbits},
	$\mu_{CZ}(\g_+^{k+l}) = 4k + 2$ at $c_{k,l}^+ - \eps$ and $4k - 2$ at $c_{k,l}^+ + \eps$.
	Moreover, the $\Sigma_{k,l}$-family exists for $c_{k,l}^+ - \eps$ and disappears at $c_{k,l}^+ + \eps$.  
	Applying the same spectral–sequence argument as above, the degree shift must again be $4k - 2$, yielding the desired result.  
	The spectral sequences for this case are shown in \Cref{Figure - MBSS3} and \Cref{Figure - MBSS4}.
	
\end{proof}

\begin{figure}[h]
	\centering
	
	\begin{minipage}{0.23\textwidth}
		\centering
		\begin{tikzpicture}[scale=0.6]
			\draw (0.5,0.5) -- (0.5,5.5);
			\draw (0.5,0.5) -- (1.5,0.5);
			
			\node[left] at (0.4,1) {$4k-3$};
			\node[left] at (0.4,2) {$4k-2$};
			\node[left] at (0.4,3) {$4k-1$};
			\node[left] at (0.4,4) {$4k$};
			\node[left] at (0.4,5) {$4k+1$};
			
			\node[below] at (1,0.5) {\shortstack{1 \\ $\gamma_-^{k-l}$}};
			
			\filldraw[black] (1,1) circle (2pt);
			\filldraw[black] (1,2) circle (2pt);
			
		\end{tikzpicture}
		\subcaption{At $c_{k,l}^--\varepsilon$}
		\label{Figure - MBSS1}
	\end{minipage}
	\hfill
	\begin{minipage}{0.23\textwidth}
		\centering
				\begin{tikzpicture}[scale=0.6]
			\draw (0.5,0.5) -- (0.5,5.5);
			\draw (0.5,0.5) -- (2.5,0.5);
			
			\node[left] at (0.4,1) {$4k-3$};
\node[left] at (0.4,2) {$4k-2$};
\node[left] at (0.4,3) {$4k-1$};
\node[left] at (0.4,4) {$4k$};
\node[left] at (0.4,5) {$4k+1$};
			
			\node[below] at (1,0.5) {\shortstack{1 \\ $\Sigma_{k,l}$}};
			\node[below] at (2,0.5) {\shortstack{2 \\ $\gamma_-^{k-l}$}};
			
			\filldraw[red] (1,1) circle (2pt);
			\filldraw[red] (1,2) circle (2pt);
						\filldraw[red] (1,4) circle (2pt);
			\filldraw[red] (1,5) circle (2pt);
						\filldraw[black] (2,4) circle (2pt);
						\filldraw[black] (2,5) circle (2pt);
			
						\draw[black] (1,4) -- (2,4);
						\draw[black] (1,5) -- (2,5);
			
		\end{tikzpicture}
		\subcaption{At $c_{k,l}^-+\varepsilon$}
				\label{Figure - MBSS2}
	\end{minipage}
	\hfill
	\begin{minipage}{0.23\textwidth}
		\centering

\begin{tikzpicture}[scale=0.6]
	\draw (0.5,0.5) -- (0.5,5.5);
	\draw (0.5,0.5) -- (2.5,0.5);
	
			\node[left] at (0.4,1) {$4k-3$};
\node[left] at (0.4,2) {$4k-2$};
\node[left] at (0.4,3) {$4k-1$};
\node[left] at (0.4,4) {$4k$};
\node[left] at (0.4,5) {$4k+1$};
	
	\node[below] at (1,0.5) {\shortstack{1 \\ $\Sigma_{k,l}$}};
	\node[below] at (2,0.5) {\shortstack{2 \\ $\gamma_-^{k-l}$}};
	
	\filldraw[red] (1,1) circle (2pt);
	\filldraw[red] (1,2) circle (2pt);
	\filldraw[red] (1,4) circle (2pt);
	\filldraw[red] (1,5) circle (2pt);
	\filldraw[black] (2,4) circle (2pt);
	\filldraw[black] (2,5) circle (2pt);
	
	\draw[black] (1,4) -- (2,4);
	\draw[black] (1,5) -- (2,5);
	
\end{tikzpicture}
		\subcaption{At $c_{k,l}^+ - \varepsilon$}
				\label{Figure - MBSS3}
	\end{minipage}
	\hfill
	\begin{minipage}{0.23\textwidth}
		\centering
		\begin{tikzpicture}[scale=0.6]
	\draw (0.5,0.5) -- (0.5,5.5);
	\draw (0.5,0.5) -- (1.5,0.5);
	
			\node[left] at (0.4,1) {$4k-3$};
\node[left] at (0.4,2) {$4k-2$};
\node[left] at (0.4,3) {$4k-1$};
\node[left] at (0.4,4) {$4k$};
\node[left] at (0.4,5) {$4k+1$};
	
	\node[below] at (1,0.5) {\shortstack{1 \\ $\gamma_-^{k-l}$}};
	
	\filldraw[black] (1,1) circle (2pt);
	\filldraw[black] (1,2) circle (2pt);
	
\end{tikzpicture}
		\subcaption{At $c_{k,l}^+ + \varepsilon$}
				\label{Figure - MBSS4}
	\end{minipage}
	
	\caption{Morse-Bott spectral sequences near $c_{k,l}^\pm$.}
	\label{Figure - MBSS}
\end{figure}
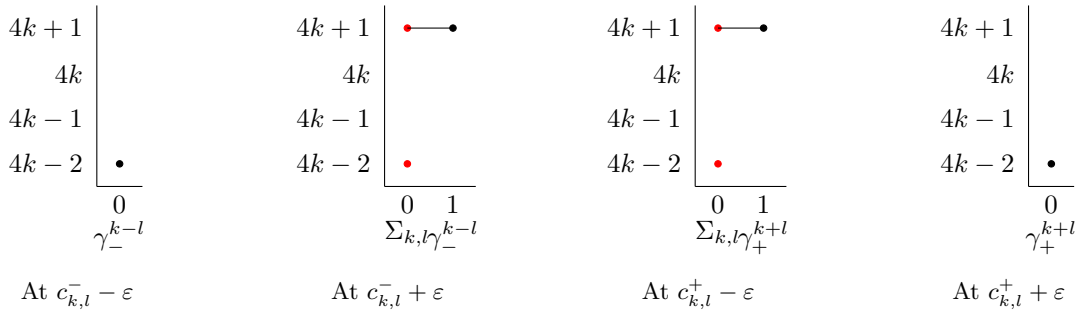



\section{Appendix - Delaunay Coordinates}\label{Appendix - Delaunay coordinates}

	\subsection{Hamilton-Jacobi Equation}
	
	We refer \cite{Arnold_89} for a detailed description of Hamilton-Jacobi equation.
	Let $(p,q)$ be coordinates in $T^*\R^n$.
	Let $g:T^*\R^n\to T^*\R^n$ be an exact symplectomorphism, which is written by $g(p,q)=(P,Q)$.
	There exists a function $S=S(p,q)$, which is called a \textbf{generating function}, such that
	$$
	pdq - PdQ = dS.
	$$
	Assume that $\det \frac{\pp(Q,q)}{\pp(p,q)}\neq0$, so that $S(p,q)=S_1(Q,q)$.
	Then we have
	$$
	\frac{\pp S_1}{\pp q} = p,\,\,\frac{\pp S_1}{\pp Q} = -P.
	$$
	We're looking for a nice coordinate system to describe the given Hamiltonian.
	If we can write $H(p,q)=K(Q)$ for some function $K$, the Hamiltonian equation becomes
	$$
	\dot{Q}=0,\,\,\dot{P}=\frac{\pp K}{\pp Q},
	$$
	which makes the flow linear,
	$$
	(Q(t),P(t)) = \left(Q(0),P(0)+\left.\frac{\pp K}{\pp Q}\right|_{Q(0)}t\right).
	$$
	That is, we need to find a generating function $S$ such that
	$$
	H\left(\frac{\pp S(Q,q)}{\pp q},q,t\right)=K(Q).
	$$
	If such relation holds, each component of $Q$ is the first integral of $H$.
	This equation is called a \textbf{Hamilton-Jacobi equation}.
	
	\subsection{Derivation of Delaunay Coordinates}
			
	Under spherical coordinates, the Kepler Hamiltonian is
	$$
	E = \frac{1}{2}\left(p_r^2 + \frac{p_\psi^2}{r^2}+\frac{p_\vp^2}{r^2\sin^2\psi}\right)-\frac{1}{r}.
	$$	
	To find action-angle coordinates, we introduce new variables $(p_l,p_g,p_\theta)$ in the place of $Q$ in the Hamilton-Jacobi equation.
	The equation becomes
	$$
	\frac{1}{2}\left(\left(\frac{\pp S}{\pp r}\right)^2 + \frac{1}{r^2}\left(\frac{\pp S}{\pp \psi}\right)^2 + \frac{1}{r^2\sin^2\psi}\left(\frac{\pp S}{\pp \vp}\right)^2\right) - \frac{1}{r} = E.
	$$
	The generating function $S$ should be written in terms of $(r,\psi,\vp,p_l,p_g,p_\theta)$.
	Assume that $S$ can be split into three functions,
	$$
	S(r,\psi,\vp,p_l,p_g,p_\theta) = S_1(r,p_l,p_g) + S_2(\psi,p_g,p_\theta) + S_3(\vp,p_\theta).
	$$
	We first define $p_l$ by
	$$
	E = -\frac{1}{2p_l^2},
	$$
	so that, if $(p_l,p_g,p_\theta)$ is a solution of Hamilton-Jacobi equation, then they are first integrals.
	Then we find other solutions as follows.
	$$
	\begin{aligned}
		\left(\frac{\pp S_3}{\pp \vp}\right)^2 & = p_\theta^2,\\
		\left(\frac{\pp S_2}{\pp \psi}\right)^2& + \frac{p_\theta^2}{\sin^2\psi} = p_g^2,\\
		\left(\frac{\pp S_1}{\pp r}\right)^2& + \frac{p_g^2}{r^2} - \frac{2}{r} = 2E = -\frac{1}{p_l^2}.
	\end{aligned}
	$$
	The generating function $S=S(r,\psi,\vp,p_l,p_g,p_\theta)$ is
	$$
	S = \int \sqrt{-\frac{1}{p_l^2}+\frac{2}{r}-\frac{p_g^2}{r^2}}dr
	+\int \sqrt{p_g^2 - \frac{p_\theta^2}{\sin^2\psi}}d\psi
	+p_\theta \vp.
	$$
	The conjugate coordinates $(l,g,\theta)$ is given by
	$$
	(l,g,\theta) = \left(\frac{\pp S}{\pp p_l},\frac{\pp S}{\pp p_g},\frac{\pp S}{\pp p_\theta}\right).
	$$
	This coordinate system $(l,g,\theta,p_l,p_g,p_\theta)$ is called \textbf{Delaunay coordinate system}.
	It follows that the Hamiltonian flow is linear
	$$
	(l,g,\theta) = \left(l_0 + \frac{1}{p_l^3}t,g_0,\theta_0\right)
	$$
	while $(p_l,p_g,p_\theta)$ are constant.
	
	\subsection{Geometric Interpretation}
	
	Another relation we can get from the generating function is
	$$
	\begin{aligned}
		(p_r,p_\psi,p_\vp)&=\left(\frac{\pp S}{\pp r},\frac{\pp S}{\pp \psi},\frac{\pp S}{\pp \vp}\right)\\
		&=\left(\sqrt{-\frac{1}{p_l^2}+\frac{2}{r}-\frac{p_g^2}{r^2}},\sqrt{p_g^2-\frac{p_\theta^2}{\sin^2\psi}},p_\theta\right).
	\end{aligned}
	$$
	Let $a$ be a half of the length of the major axis and $\eps$ be the eccentricity of a Kepler orbit.
	Then $[a(1-\eps), a(1+\eps)]$ is the interval which $r$-coordinate is projected to.
	At endpoints, we have $R=0$.
	Thus $a(1\pm \eps)$ are the root of the equation
	$$
	r^2 - 2p_l^2 r + p_g^2 p_l^2 =0.
	$$
	It follows that
	$$
	\begin{aligned}
		p_l^2 & = a,\\
		p_g^2 & = \frac{a^2(1-\eps^2)}{p_l^2} = a(1-\eps^2).
	\end{aligned}
	$$
	We have a formula for the eccentricity of Kepler orbit,
	$$
	\eps^2 = |A|^2 = 2EL^2 + 1 = -\frac{L^2}{p_l^2} + 1 = -\frac{L^2}{a} +1,
	$$
	where $L$ is the angular momentum and $A$ is the Laplace-Runge-Lenz vector.
	Also, since $p_\theta=p_\vp$ and we can easily see from the description of spherical coordinates that $p_\vp$ is $z$-component of the angular momentum.
	It follows that
	$$
	p_l=\sqrt{a}=\frac{1}{\sqrt{-2E}},\quad		p_g= |L|,\quad p_\theta = L_3.
	$$
	The geometric interpretation of the conjugate coordinates are well-known in many literatures in the physics and astronomy, and the derivations can be found in \cite{Chenciner_89}.
	\begin{itemize}
		\item $l$ is the \textbf{mean anomaly}, which measures the angular velocity of the circular orbit which has the same Kepler energy with the given orbit.
		\item $g$ is the \textbf{argument of perigee}, which measures the angle between $q_1q_2$-plane and the periapsis of the ellipse.
		\item $\theta$ is the azimuthal angle in spherical coordinates.
	\end{itemize}

\bibliographystyle{amsalpha}
\bibliography{SRKP.bib}
\end{document}